\def\@cline#1-#2\@nil{%
  \omit
  \@multicnt#1%
  \advance\@multispan\m@ne
  \ifnum\@multicnt=\@ne\@firstofone{&\omit}\fi
  \@multicnt#2%
  \advance\@multicnt-#1%
  \advance\@multispan\@ne
  \leaders\hrule\@height\arrayrulewidth\hfill
  \cr
  \noalign{\nobreak\vskip-\arrayrulewidth}}
\renewcommand{\baselinestretch}{\baselinestretch}
\renewcommand{\baselinestretch}{1.1}
\numberwithin{equation}{section}
\newtheorem{thm}{Theorem}[section]
\newtheorem{lem}[thm]{Lemma}
\newtheorem{defn}[thm]{Definition}
\newtheorem{rmk}[thm]{Remark}
\newcommand{\ord}{\textnormal{ord}}
\newcommand{\diag}{\text{diag}}
\newcommand{\z}{{\mathbb Z}}
\newcommand{\q}{{\mathbb Q}}
\newcommand{\0}{\bm 0}
\newcommand{\fs}{\mathfrak s}
\newcommand{\Mod}[1]{\ (\mathrm{mod}\ #1)}
\newcommand{\fn}{\mathfrak n}
\newcommand{\ang}[1]{\langle #1 \rangle}
\begin{document}

\title{Primitively $2$-universal senary integral quadratic forms}

\author{Byeong-Kweon Oh and Jongheun Yoon}

\address{Department of Mathematical Sciences and Research Institute of Mathematics, Seoul National University, Seoul 08826, Korea}
\email{bkoh@snu.ac.kr}
\thanks{This work of the second author was supported by the National Research Foundation of Korea(NRF) grant funded by the Korea government(MSIT) (NRF-2019R1A2C1086347 and NRF-2020R1A5A1016126).}

\address{Department of Mathematical Sciences, Seoul National University, Seoul 08826, Korea}
\email{jongheun.yoon@snu.ac.kr}

\subjclass[2020]{Primary 11E12, 11E20}

\keywords{primitively $2$-universal senary integral quadratic forms}

\begin{abstract}
For a positive integer $m$, a (positive definite integral) quadratic form is called primitively $m$-universal if it primitively represents all quadratic forms of rank $m$. It was proved in \cite{JKKKO23} that there are exactly $107$ equivalence classes of primitively $1$-universal quaternary quadratic forms. In this article, we prove that the minimal rank of primitively $2$-universal quadratic forms is six, and there are exactly $201$ equivalence classes of primitively $2$-universal senary quadratic forms.
\end{abstract}

\maketitle

\section{Introduction}
\label{sec:intro}

A quadratic form of rank $n$ is a quadratic homogeneous polynomial
\[
 f(x_1, \dots, x_n) = \sum_{i, j = 1}^n f_{ij}x_i x_j \qquad (f_{ij} = f_{ji}\in\q)\text{,}
\]
where the corresponding symmetric matrix $M_f = (f_{ij})$ is nondegenerate. We say $f$ is integral if $M_f$ is an integral matrix, and we say $f$ is positive definite if $M_f$ is positive definite. Throughout this article, we always assume that any quadratic form is integral and positive definite.

For two (positive definite integral) quadratic forms $f$ and $g$ of rank $n$ and $m$, respectively, we say $g$ is represented by $f$ if there is an integral matrix $T\in M_{n, m}(\z)$ such that $T^t M_f T = M_g$. We say $g$ is isometric to $f$ if the above integral matrix $T$ is invertible. The equivalence class of $f$ is the set of all quadratic forms which are isometric to $f$. We say $g$ is primitively represented by $f$ if the above integral matrix $T$ can be extended to an invertible matrix in $GL_n(\z)$ by adding suitable $(n-m)$ columns. In particular, a positive integer $a$ is primitively represented by $f$ if and only if there are integers $x_1$, \dots, $x_n$ such that
\[
 f(x_1, \dots, x_n) = a \quad\text{and}\quad \gcd(x_1, \dots, x_n) = 1\text{.}
\]

For a positive integer $m$, a quadratic form is called (primitively) $m$-universal if it (primitively, respectively) represents all quadratic forms of rank $m$. Lagrange's four-square theorem states that the quaternary quadratic form corresponding to the identity matrix $I_4$ is $1$-universal. The complete classification of $1$-universal quadratic forms up to isometry has been done by Ramanujan, Dickson, Conway--Schneeberger, and Bhargava (see \cite{Ra17}, \cite{Di27}, and \cite{Bh99}). In 1998, Kim, Kim, and the first author proved in \cite{KKO98} that there are exactly eleven equivalence classes of $2$-universal quinary quadratic forms. For more information on $n$-universal quadratic forms, one may see \cite{Kim04} and \cite{Oh00}.

Let $f$ and $g$ be quadratic forms of rank $n$ and $m$, respectively. We say $g$ is represented by $f$ over the ring of $p$-adic integers $\z_p$ for some prime $p$, if there is a matrix $T\in M_{n, m}(\z_p)$ such that $T^t M_f T = M_g$. We further say $g$ is primitively represented by $f$ over $\z_p$ if the above matrix $T$ can be extended to an invertible matrix in $GL_n(\z_p)$ by adding suitable $(n-m)$ columns in $\z_p^n$. Clearly, if $g$ is (primitively) represented by $f$ over $\z$, then $g$ is also (primitively) represented by $f$ over $\z_p$ for any prime $p$. However, the converse is not true in general. In fact, there is an effective criterion whether or not $g$ is represented by $f$ over $\z_p$ for any prime $p$ (for this, see \cite{OM58}). However, as far as the authors know, there is no known effective criterion whether or not $g$ is primitively represented by $f$ over $\z_p$.

Finding primitively $1$-universal quadratic forms was first considered by Budarina in \cite{Bu10}. She classified all primitively $1$-universal quaternary quadratic forms satisfying some special local properties. Later, she also classified in \cite{Bu11} all primitively $2$-universal quadratic forms which has squarefree odd discriminant and of class number one. Recently, Earnest and Gunawardana classified in \cite{EG21} all quadratic forms that primitively represent all integers corresponding to unary quadratic forms over $\z_p$ for any prime $p$ including $p=2$. Furthermore, they provided a complete list of all $1$-universal quaternary quadratic forms that are almost primitively $1$-universal. Here, a quadratic form is called almost primitively $1$-universal if it represents almost all positive integers primitively. Ju, Kim, Kim, Kim, and the first author finally proved in \cite{JKKKO23} that there are exactly $107$ equivalence classes of primitively $1$-universal quaternary quadratic forms. In this article, we prove that the minimal rank of primitively $2$-universal quadratic forms is six, and that there are exactly $201$ equivalence classes of primitively $2$-universal senary quadratic forms (see Table~\ref{tblr:201}).

The subsequent discussion will be conducted in the more suitable language of quadratic spaces and lattices. Let $F$ be either $\q$ or $\q_p$ for some prime $p$. An (quadratic) $F$-space is a finite dimensional vector space $V$ equipped with a nondegenerate symmetric bilinear form
\[
 B: V\times V\to F
\]
and the associated quadratic form $Q: V\to F$ is defined by $Q(v) = B(v, v)$ for any $v\in V$. Let $R$ be either $\z$ or $\z_p$ for some prime $p$. An (quadratic) $R$-lattice is a finitely generated free $R$-module $L$ equipped with a nondegenerate symmetric bilinear form
\[
 B: L\times L\to R
\]
and an associated quadratic form $Q: L\to R$ is defined by $Q(v) = B(v, v)$ for any $v\in L$. We will denote by $Q(L)$ the set of $Q(v)$ for any vector $v\in L$. The scale $\fs L$ of $L$ is the ideal $B(L, L)$ in $R$, and the norm $\fn L$ of $L$ is the ideal in $R$ generated by the set $Q(L)$.

Given an $R$-basis $e_1, \dotsc, e_n$ for an $R$-lattice $L$, we call the corresponding symmetric $n\times n$ matrix $M_L = (B(e_i, e_j))$ the Gram matrix of $L$, and in this case, we write $L\cong (B(e_i, e_j))$. We call a $\z$-lattice $L$ positive definite if $M_L$ is positive definite. For a symmetric $n\times n$ matrix $N$ over $R$, we let $\langle N\rangle$ (or simply $N$) stand for any $\z$-lattice whose Gram matrix is $N$. For instance, the $\z$-lattice $I_n$ stands for the $n$-ary lattice whose Gram matrix is the identity matrix of rank $n$. Moreover, if the Gram matrix of $L$ is a diagonal matrix $\diag(a_1, \dotsc, a_n)$ ($a_i\in R$), then we simply write $L\cong \langle a_1, \dotsc, a_n\rangle$. For a $\z$-lattice $L$ and a positive integer $a$, $L^a$ denotes the $\z$-lattice obtained from scaling $L$ by $a$ so that 
\[
 M_{L^a}=a\cdot M_L\text{.}
\]
For a $\z$-lattice $L$, we define $L_p = \z_p\otimes L$, which is a $\z_p$-lattice for any prime $p$. From now on, we always assume that any $\z$-lattice $L$ is integral, that is, $\fs(L)\subseteq\z$, and positive definite, that is, $M_L$ is positive definite.

Let $m$ and $n$ be positive integers such that $m\le n$. An $m\times n$ ($n\times m$) matrix $A$ over $R$ is called primitive if it can be extended to an invertible matrix in $GL_n(R)$ by adding suitable $n-m$ rows (columns, respectively), or equivalently, if the greatest common divisor of the determinants of all $m\times m$ submatrices of $A$ is a unit in $R$. A finite sequence $v_1, \dots, v_m$ of vectors in $R^n$ is called primitive if it can be extended to a basis for $R^n$, or equivalently, if the coefficient matrix of $v_1, \dots, v_m$ is primitive. Finally, a submodule $M$ of $R^n$ is primitive if and only if there exists a basis for $M$ which is primitive, or equivalently, $M$ is a direct summand of $R^n$. Note that a vector $v = (a_1, \dots, a_n)\in R^n$ is primitive if and only if
\[
 \gcd(a_1, \dots, a_n) \in R^\times\text{.}
\]
For an $R$-lattice $L$, we will denote by $Q^\ast(L)$ the set of $Q(v)$ for any primitive vector $v\in L$.

\newdimen\ofontdimentwo%
\newdimen\ofontdimensam%
\newdimen\ofontdimennes%
\newdimen\ofontdimenqil%
\ofontdimentwo=\fontdimen2\font%
\ofontdimensam=\fontdimen3\font%
\ofontdimennes=\fontdimen4\font%
\ofontdimenqil=\fontdimen7\font%
\fontdimen2\font=3.3pt%
\fontdimen3\font=1.65pt%
\fontdimen4\font=1.1pt%
\fontdimen7\font=1.1pt%
For two $R$-lattices $M$ and $L$, a representation from $M$ to $L$ is a linear map $\sigma : M \to L$ such that $B(\sigma(x), \sigma(y)) = B(x, y)$ for any $x$, $y\in L$. If in addition $\sigma(M)$ is a primitive sublattice of $L$, then such a representation is called primitive. If there exists a (primitive) representation $\sigma : M \to L$ then we say that $M$ is (primitively, respectively) represented by $L$. If such a representation $\sigma$ is bijective then we say that $M$ is isometric to $L$, and in that case, we denote it by $\sigma: M\cong L$. An $R$-lattice $L$ is called (primitively) $n$-universal if $L$ (primitively, respectively) represents all $R$-lattices of rank $n$.
\fontdimen2\font=\ofontdimentwo%
\fontdimen3\font=\ofontdimensam%
\fontdimen4\font=\ofontdimennes%
\fontdimen7\font=\ofontdimenqil%

For an $R$-lattice $L$, the class of $L$ consists of all $R$-lattices that are isometric to $L$. Let $L$ and $M$ be $\z$-lattices. If $M_p$ is isometric to $L_p$ for any prime $p$, then we say that $M$ is in the genus of $L$. It is well-known that genus is partitioned into finitely many classes. It is well-known that if $M_p$ is (primitively) represented by $L_p$ for any prime $p$, then $M$ is (primitively, respectively) represented by some $\z$-lattice $L'$ in the genus of $L$ (see \cite[102:5]{OM}).

We always assume that $\mathbb{H}$ and $\mathbb{A}$ are binary $R$-lattices whose Gram matrices are
\[
 \mathbb{H}\cong \left(\begin{smallmatrix}0&1\\1&0\end{smallmatrix}\right) \quad\text{and}\quad \mathbb{A}\cong \left(\begin{smallmatrix}2&1\\1&2\end{smallmatrix}\right)\text{.}
\]
If $R = \z_2$, then $\mathbb{H}$ is an isotropic even unimodular lattice, whereas $\mathbb{A}$ is an anisotropic even unimodular lattice. For any odd prime $p$, let $\Delta_p$  be any nonsquare unit in $\z_p$. Any unexplained notation and terminology can be found in \cite{Ki} or \cite{OM}.

\section{The minimal rank of primitively $2$-universal $\z$-lattices}
\label{sec:rank}

It is well known in \cite{KKO98} that the minimal rank of $2$-universal $\z$-lattices is five, which implies that the minimal rank of primitively $2$-universal $\z$-lattices is at least five. The aim of this section is to show that the minimal rank of primitively $2$-universal $\z$-lattices is, in fact, six.

\begin{lem}\label{lem:aniso4}
For a prime $p$, let $L$ be an anisotropic $\z_p$-lattice. Let $L = L_0 \mathbin{\perp} \cdots \mathbin{\perp} L_t$ be a Jordan decomposition of $L$ such that $L_i = 0$ or $\fs(L_i) = p^i\z_p$. If $L_t\ne 0$, then any integer in $4p^{t+1}\z_p$ is not primitively represented by $L$.
\end{lem}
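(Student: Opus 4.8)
The plan is to convert the statement into a bound on $p$-adic valuations and then use the anisotropy of $L$ to preclude cancellation. Since $4\in\z_p^\times$ for odd $p$ we have $4p^{t+1}\z_p=p^{t+1}\z_p$, while $4\cdot 2^{t+1}\z_2=2^{t+3}\z_2$; so it is enough to prove that every primitive $v\in L$ satisfies $\ord_pQ(v)\le t$ for odd $p$ and $\ord_2Q(v)\le t+2$ for $p=2$. Writing $v=v_0+\dots+v_t$ with $v_i\in L_i$, primitivity of $v$ means $v\notin pL$, equivalently $v_i\notin pL_i$ for some $i$; the hypothesis $L_t\ne 0$ guarantees that $t$ is the genuine top Jordan level.

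First I would record the structure forced by anisotropy for odd $p$. A unimodular $\z_p$-lattice of rank at least $3$ is isotropic---a nondegenerate quadratic form in at least three variables over $\f_p$ represents $0$ nontrivially, and this zero lifts by Hensel's lemma---so writing $L_i=p^iM_i$ with $M_i$ unimodular forces $\rank M_i\le 2$. On such a rank $\le 2$ anisotropic unimodular lattice every primitive vector takes a unit value (the binary case is exactly where $-\det$ being a nonsquare prevents $Q$ from vanishing modulo $p$). Consequently, if $v_i=p^{c_i}v_i'$ with $v_i'$ primitive in $L_i$, then $\ord_pQ(v_i)=i+2c_i$; in particular $\ord_pQ(v_i)\equiv i\pmod 2$ for every nonzero component. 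I would package the core computation as a claim: for any anisotropic $\z_p$-lattice $W=\perp_i p^iM_i$ ($p$ odd) all of whose nonzero Jordan components sit at levels of a single parity $\delta$, with top level $i^\ast$, every nonzero $w\in W$ has $\ord_pQ(w)\equiv\delta\pmod 2$ and every primitive $w\in W$ has $\ord_pQ(w)\le i^\ast$. Since $W$ has rank at most $2$, this is a short case analysis over $W\cong\langle p^au_1\rangle$, $p^a\langle u_1,u_2\rangle$, or $\langle p^au_1,p^bu_2\rangle$, and in each binary case the hypothesis that $-u_1u_2$ is a nonsquare is precisely what forbids the two diagonal terms from cancelling.

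With this claim in hand the odd case follows by separating parities. Put $E=\perp_{i\ \mathrm{even}}L_i$ and $O=\perp_{i\ \mathrm{odd}}L_i$ and write $v=e+o$. Both are anisotropic, and by the claim every nonzero vector of $E$ has even valuation while every nonzero vector of $O$ has odd valuation; hence $Q(e)$ and $Q(o)$ cannot cancel and $\ord_pQ(v)=\min\bigl(\ord_pQ(e),\ord_pQ(o)\bigr)$. As $v$ is primitive, $e$ is primitive in $E$ or $o$ is primitive in $O$, and the claim bounds the corresponding valuation by the relevant top level, which is at most $t$. Therefore $\ord_pQ(v)\le t$, which is what we wanted for odd $p$.

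The main obstacle is $p=2$, where the same skeleton survives but the $2$-adic Jordan theory is considerably more delicate: a unimodular component may be of even type (for instance $\mathbb{A}$, whose primitive values already lie in $2\z_2$) or of odd type, its norm and scale may differ by a factor of $2$, and the clean implication ``primitive $\Rightarrow$ unit value'' is false. I would again isolate the top component $L_t$ and use anisotropy to obstruct cancellation, but now a primitive value can exceed its level by one because of even-type components, and an additional shift enters when one passes between the norm and the scale of a component. These two $2$-adic effects are exactly what the extra factor $4=2^2$ is there to absorb, producing the weaker bound $\ord_2Q(v)\le t+2$. Classifying the possible $2$-adic shapes of the even and odd parts and checking each against the anisotropy constraint is the technical heart of the proof.
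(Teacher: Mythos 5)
Your reduction to the valuation bounds ($\ord_p Q(v)\le t$ for odd $p$, $\ord_2 Q(v)\le t+2$ for $p=2$) is the right reformulation, and your odd-$p$ argument is correct and complete: anisotropy does force $\rank M_i\le 2$ for odd $p$, primitive vectors of an anisotropic modular lattice do take values of valuation exactly the level, and the splitting $L=E\perp O$ with the parity invariant $\ord_p Q\equiv \delta\pmod 2$ legitimately rules out cancellation between the two parts. This is a genuinely different route from the paper's, which never separates parities: there one takes $k$ with $x_k$ primitive in $L_k$, notes $\ord_p(Q(x_k))=k$, uses $Q(x)=Q(x-x_k)+Q(x_k)\in 4p^{k+1}\z_p$ together with the fact that units $\equiv 1\pmod p$ are squares to write $Q(x-x_k)=-\epsilon^2 Q(x_k)$, and exhibits the nonzero isotropic vector $(x-x_k)+\epsilon x_k$, contradicting anisotropy. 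For odd $p$ either approach works; yours is arguably more self-contained, the paper's has the advantage of extending to $2$.

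The genuine gap is that the $p=2$ case is not proved: your final paragraph names the obstacles and the target bound but contains no argument, and this is the only case where the factor $4$ matters (and the case actually invoked later, since the core prime equals $2$ for types A, B, D, and I). Moreover the ``same skeleton'' does not survive at $2$ in the form you set it up: the rank bound fails, since $I_4$ is an anisotropic unimodular $\z_2$-lattice of rank $4$ ($x^2+y^2+z^2+w^2$ is the norm form of the quaternion division algebra over $\q_2$), so a single parity class can carry rank up to $4$; and the parity invariant fails twice over --- on $\mathbb{A}$ (level $0$) every primitive vector has $\ord_2 Q=1$, while on $I_3$ primitive vectors realize both valuations $0$ and $1$ (e.g.\ $Q(1,1,0)=2$) --- so $Q(e)$ and $Q(o)$ can share a valuation and your non-cancellation step collapses. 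What replaces it in the paper is a cancellation argument modulo $2^{k+3}$ (units $\equiv 1\pmod 8$ are squares; this is precisely where the modulus $4p^{k+1}$ is used), split according to whether $L_k$ admits an orthogonal basis: if it does, one peels off a single diagonal vector $\alpha_j e_j$ with $\ord_2 Q(\alpha_j e_j)=k$ and gets $Q(x-\alpha_j e_j)=-\epsilon^2 Q(\alpha_j e_j)$; if it does not, then $L_k\cong \mathbb{A}^{2^k}$ with $\ord_2 Q(x_k)=k+1$, and since $\mathbb{A}^{2^k}$ represents every element of $2^{k+1}\z_2^\times$ one finds $z\in L_k$ with $Q(z)=-Q(x-x_k)$; in both cases a nonzero isotropic vector contradicts anisotropy. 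Without an argument of this kind (or a completed version of the classification you defer), the lemma remains unproven exactly where it is needed.
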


\begin{proof}
It is well-known that for any anisotropic unimodular $\z_p$-lattice $M$, if $Q(x)\in 4p\z_p$ for some $x\in M$, then $x\in pM$. For an $\alpha\in\z_p$, suppose that $4p^{t+1}\alpha$ is primitively represented by $L$. Then there is a primitive vector $x\in L$ such that $Q(x) = 4p^{t+1}\alpha$. Let $x = x_0 + \cdots + x_t$ for some $x_i\in L_i$ ($0\le i\le t$). Then there is a $k$ such that $x_k$ is primitive in $L_k$. If $p$ is odd, then $\ord_p (Q(x_k)) = k$, and thus
\[
 Q(x - x_k) \equiv -Q(x_k) \pmod{4p^{k+1}}\text{.}
\]
Hence, $Q(x - x_k) = - \epsilon^2 Q(x_k)$ for some unit $\epsilon\in\z_p$. Since $x-x_k\notin L_k$, the vector $(x-x_k) + \epsilon x_k$ is a nonzero isotropic vector. This is a contradiction.

Now, assume that $p = 2$. First, suppose that $L_k$ admits an orthogonal basis $e_1, \dots, e_m$. If $x_k = \alpha_1 e_1 + \cdots + \alpha_m e_m$, then $\ord_2 (Q(\alpha_j e_j)) = k$ for some $1\le j\le m$. Thus $Q(x - \alpha_j e_j) \equiv -Q(\alpha_j e_j) \Mod{4p^{k+1}}$, which implies $Q(x - \alpha_j e_j) = - \epsilon^2 Q(\alpha_j e_j)$ for some unit $\epsilon\in\z_2$. Since $x-\alpha_j e_j\notin \z_2 e_j$, the vector $(x - \alpha_j e_j) + \epsilon \alpha_j e_j$ is a nonzero isotropic vector. This is a contradiction. If $L_k$ does not admit an orthogonal basis, then $L_k\cong \mathbb{A}^{2^k}$ and $\ord_2 (Q(x_k)) = k+1$. Since $L_k$ represents every integer in $2^{k+1}\z_2^\times$, there is a vector $z\in L_k$ such that $Q(z) = -Q(x - x_k)$. Since $x-x_k\notin L_k$, $z + (x - x_k)$ is a nonzero isotropic vector. This is a contradiction.
\end{proof}

\begin{lem}\label{lem:anisocut}
For a positive integer $n$, let $U$ be an $n$-dimensional quadratic space over $\q_p$ and let $W$ be an anisotropic quadratic space over $\q_p$. Then any $\z_p$-lattice on $U\mathbin{\perp} W$ is not primitively $(n+1)$-universal.
\end{lem}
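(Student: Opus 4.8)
The plan is to reduce the failure of primitive $(n+1)$-universality to the obstruction for anisotropic lattices established in Lemma~\ref{lem:aniso4}, applied to the lattice that $L$ cuts out on the anisotropic component $W$. We may assume $\dim W\ge 1$, since otherwise $L$ has rank $n$ and cannot represent any lattice of rank $n+1$ at all. First I would study $N := L\cap W$. Since $L$ is a full lattice on $U\mathbin{\perp} W$, every $w\in W$ satisfies $p^k w\in L$ for some $k$, so $N$ is a full $\z_p$-lattice on $W$; being a sublattice of the anisotropic space $W$, it is anisotropic and nonzero. Writing a Jordan decomposition $N = N_0\mathbin{\perp}\cdots\mathbin{\perp} N_t$ with $N_t\ne 0$, Lemma~\ref{lem:aniso4} guarantees that no integer in $4p^{t+1}\z_p$ is primitively represented by $N$.

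The candidate non-represented lattice is then $M\cong 4p^{t+1}I_{n+1}$, which has rank $n+1$ and satisfies $Q(v)\in 4p^{t+1}\z_p$ for every $v\in M$. Suppose, for contradiction, that there were a primitive representation $\sigma : M\to L$. Since $\dim U = n < n+1$, the composite $\pr_U\circ\sigma : M\otimes\q_p\to U$ with the orthogonal projection onto $U$ has nontrivial kernel; choosing a kernel vector and rescaling by a suitable power of $p$ to make it primitive in $M$ yields a primitive $v\in M$ with $\pr_U(\sigma(v)) = 0$, hence $\sigma(v)\in W$ and therefore $\sigma(v)\in L\cap W = N$.

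The crux of the argument, and the step I expect to require the most care, is the transfer of primitivity from $M$ to $N$. Extending $v$ to a basis of $M$ and applying the injective isometry $\sigma$ produces a basis of $\sigma(M)$; because $\sigma(M)$ is a primitive (direct summand) sublattice of $L$, this basis extends to a basis of $L$, so $\sigma(v)\notin pL$ is primitive in $L$. Since $p(L\cap W)\subseteq pL$, it follows that $\sigma(v)\notin p(L\cap W)$, i.e.\ $\sigma(v)$ is primitive in $N$ as well. But then $\sigma(v)$ witnesses that $Q(v) = Q(\sigma(v))\in 4p^{t+1}\z_p$ is primitively represented by $N$, contradicting Lemma~\ref{lem:aniso4}.

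Hence no primitive representation $\sigma : M\to L$ exists, so the rank-$(n+1)$ lattice $M$ is not primitively represented by $L$, and $L$ is not primitively $(n+1)$-universal. The only points demanding real verification beyond this outline are that $N = L\cap W$ is genuinely a full lattice on $W$ (so that its top Jordan constituent $N_t$ is nonzero and Lemma~\ref{lem:aniso4} applies), and the primitivity transfer described in the previous paragraph; the remaining dimension count and the computation $\fn(M)\subseteq 4p^{t+1}\z_p$ are routine.
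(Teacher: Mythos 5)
Your proof is correct, but it takes a genuinely different route from the paper's. The paper fixes an arbitrary $\z_p$-lattice $\ell$ on $U$ and shows that $\ell\mathbin{\perp}\langle a\rangle$ is an exception for suitable $a$: since it cannot control where a primitive representation $\sigma:\ell\to L$ lands, it invokes Kitaoka's finiteness theorem \cite[Theorem~5.3.3]{Ki} to conclude that only finitely many orthogonal complements $\sigma(\ell)^\perp$ occur up to isometry, observes that each is anisotropic because $\q_p\sigma(\ell)^\perp\cong W$, applies Lemma~\ref{lem:aniso4} to each, and chooses $a$ in the intersection of the resulting ideals $p^r\z_p$. You avoid the finiteness input entirely: fixing the single lattice $N=L\cap W$ once and for all, the rank count $\dim U=n<n+1$ forces any primitive representation of your explicit exception $4p^{t+1}I_{n+1}$ to send some primitive vector into $N$, and then Lemma~\ref{lem:aniso4} applies just once. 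Your two flagged verification points are indeed sound: $N$ is a full anisotropic lattice on $W$ by the clearing-denominators argument, and primitivity descends from $L$ to $N$ because over the local ring $\z_p$ a vector of a lattice $K$ is primitive if and only if it lies outside $pK$, combined with $pN\subseteq pL$ (note also that $\sigma(M)$ being a direct summand means \emph{every} basis of $\sigma(M)$ extends to one of $L$, which justifies $\sigma(v)\notin pL$). What each approach buys: yours is more elementary and self-contained, producing one explicit full-rank exception; the paper's yields the more flexible family of exceptions $\ell\mathbin{\perp}\langle a\rangle$ with the top block $\ell$ prescribed arbitrarily on $U$, which aligns with how complements $\sigma(\ell)^\perp$ are exploited elsewhere in the paper, though for the purposes of Lemma~\ref{lem:not5} either version suffices.
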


\begin{proof}
Let $L$ be any $\z_p$-lattice on $U\mathbin{\perp} W$ and $\ell$ be any $\z_p$-lattice on $U$. It is enough to show that there is an $a\in\z_p$ such that $L$ cannot primitively represent $\ell\mathbin{\perp}\langle a\rangle$. To do this, we show that there is an $a\in\z_p$ that is not primitively represented by $\sigma(\ell)^\perp$ for any primitive representation $\sigma : \ell \to L$.

We know that there are only finitely many possibilities of $\sigma(\ell)$ up to isometry of $L$ by \cite[Theorem~5.3.3]{Ki}. Therefore, for any possible primitive representation $\sigma : \ell \to L$, there are only finitely many $\sigma(\ell)^\perp$ up to isometry. For any such $\sigma(\ell)^\perp$, we have $\q_p \sigma(\ell)^\perp\cong W$, for $\q_p \sigma(\ell)\cong U$. Hence, $\sigma(\ell)^\perp$ is an anisotropic $\z_p$-lattice. Therefore by Lemma~\ref{lem:aniso4}, there exists an integer $r$ such that any integer in $p^r\z_p$ is not primitively represented by $\sigma(\ell)^\perp$. Now, if we choose any integer $a$ in the intersection of all possible $p^r\z_p$'s, then $\ell\mathbin{\perp}\langle a\rangle$ is not primitively represented by $L$. This completes the proof.
\end{proof}

\begin{lem}\label{lem:not5}
For any quinary $\z$-lattice $L$, there are infinitely many binary $\z$-lattices that are not primitively represented by $L$.
\end{lem}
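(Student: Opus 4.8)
The plan is to find a single prime $p$ at which $L$ fails, locally and hence globally, to primitively represent an infinite family of binary lattices, and then to feed this into the machinery of Lemmas~\ref{lem:aniso4} and~\ref{lem:anisocut}. Write $V=\q L$, a positive definite quadratic space over $\q$ with $\dim V=5$. Lemma~\ref{lem:anisocut} yields a \emph{binary} obstruction exactly when $V_p$ splits as $U\mathbin{\perp} W$ with $\dim U=1$ and $W$ anisotropic, forcing $\dim W=4$; since the unique $4$-dimensional anisotropic $\q_p$-space sits inside a $5$-dimensional space with a $1$-dimensional orthogonal complement precisely when the anisotropic part of $V_p$ has dimension $3$, the first task is to locate a prime $p$ with this property.

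I would establish the existence of such a prime, and this is exactly where positive-definiteness enters. For a place $v$ let $S_v(V)$ denote the Hasse invariant of $V$. A $5$-dimensional $\q_p$-space with discriminant $d$ is isometric to $\mathbb{H}\mathbin{\perp}\mathbb{H}\mathbin{\perp}\langle d\rangle$, and so has anisotropic part of dimension $1$, for one value of its Hasse invariant, and has anisotropic part of dimension $3$ for the other; a direct computation gives $S_p(\mathbb{H}\mathbin{\perp}\mathbb{H}\mathbin{\perp}\langle d\rangle)=(-1,-1)_p$ independently of $d$, so the dimension-$3$ case occurs if and only if $S_p(V)(-1,-1)_p=-1$. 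Because $V$ is positive definite we have $V_\infty\cong\langle 1,1,1,1,1\rangle$, hence $S_\infty(V)=1$ whereas $(-1,-1)_\infty=-1$, so the real place contributes $-1$ to the product $\prod_v S_v(V)(-1,-1)_v$. This product equals $1$ by Hilbert reciprocity, since $\prod_v S_v(V)=1$ and $\prod_v(-1,-1)_v=1$; therefore an odd number of finite primes contribute $-1$, and in particular there is at least one finite prime $p$ at which $V_p$ has anisotropic part of dimension $3$. At such a $p$ we may write $V_p\cong\langle c\rangle\mathbin{\perp} W$ with $W$ the $4$-dimensional anisotropic $\q_p$-space.

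With this $p$ fixed I would run the argument of Lemma~\ref{lem:anisocut} with $U=\langle c\rangle$, holding the unary part fixed and varying the other entry. Fix a positive integer $b$ with $\langle b\rangle_p$ lying on $U$, i.e. $b\in c(\q_p^\times)^2$. By \cite[Theorem~5.3.3]{Ki}, as $\sigma$ ranges over the primitive representations $\langle b\rangle\to L$ there are only finitely many complements $\sigma(\langle b\rangle)^\perp$ up to isometry, and each satisfies $\q_p\sigma(\langle b\rangle)^\perp\cong W$ by Witt cancellation and is therefore an anisotropic $\z_p$-lattice of rank $4$. Lemma~\ref{lem:aniso4} then furnishes an integer $r$ such that no element of $p^r\z_p$ is primitively represented by any of these finitely many complements. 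For any $a\in p^r\z$ the binary lattice $\langle b,a\rangle$ is not primitively represented by $L$: a primitive representation would restrict to a primitive representation $\sigma$ of $\langle b\rangle$ and would carry the $a$-vector to a primitive vector of norm $a$ in $\sigma(\langle b\rangle)^\perp$, contradicting the choice of $r$. Letting $a$ run over the positive integers in $p^r\z$, the discriminants $ba$ take infinitely many values, so the lattices $\langle b,a\rangle$ fall into infinitely many isometry classes, as required.

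The main obstacle is the existence step: guaranteeing a finite prime at which the anisotropic part of $V_p$ has dimension $3$. This is the precise phenomenon separating rank $5$ from rank $6$, and it rests on the sign forced at the real place by positive-definiteness together with Hilbert reciprocity; the one delicate point is to fix a normalization of the Hasse invariant making the parity count correct. By contrast, the passage from a single obstruction to infinitely many is routine once $p$ is in hand, the only new input beyond Lemma~\ref{lem:anisocut} being that varying $a$ over $p^r\z$ produces infinitely many inequivalent binary lattices rather than a single one.
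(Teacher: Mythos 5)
Your proposal is correct and follows essentially the same route as the paper: Hilbert reciprocity combined with the sign forced at the real place by positive definiteness ($S_\infty = 1 \ne (-1,-1)_\infty$) produces a finite prime with an anisotropic quaternary orthogonal complement, after which Lemmas~\ref{lem:aniso4} and~\ref{lem:anisocut} yield the binary exceptions. The only differences are cosmetic: you apply reciprocity to the Hasse invariants of the quinary space $V$ directly, where the paper first splits off $\langle dL\rangle$ globally and applies it to the quaternary complement $U$ of discriminant $1$, and you spell out the final infinitude step (varying $a\in p^r\z$ and distinguishing the lattices $\langle b,a\rangle$ by discriminant) that the paper leaves implicit.
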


\begin{proof}
Since $\q L$ represents any positive rational number, we may write $\q L = U \mathbin{\perp} \langle dL\rangle$ for some quaternary $\q$-space $U$. Since $U$ is a quaternary space of discriminant $1$, it follows that for each prime $p<\infty$, $U_p$ is isotropic if and only if $S_p U = (-1, -1)$. However, since $S_\infty U = 1 \ne (-1, -1)$, there is a prime $q$ such that $S_q U \ne (-1, -1)$ by Hilbert reciprocity law. This implies that $U_q$ is anisotropic. Hence, $L_q$ is not primitively $2$-universal from the above lemma. Therefore, there are infinitely many binary $\z$-lattices that are not primitively represented by $L$.
\end{proof}

\begin{thm}\label{thm:rankissix}
The minimal rank of primitively $2$-universal $\z$-lattices is $6$.
\end{thm}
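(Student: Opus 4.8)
The plan is to prove the claimed equality by treating the two bounds separately; the lower bound is the substantive content of this section, while the upper bound reduces to exhibiting one explicit senary example.

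For the lower bound I would combine the classical fact with Lemma~\ref{lem:not5}. By \cite{KKO98} the minimal rank of $2$-universal $\z$-lattices is five, so every primitively $2$-universal $\z$-lattice, being in particular $2$-universal, has rank at least five. Lemma~\ref{lem:not5} then rules out rank five entirely: for any quinary $\z$-lattice $L$ there are (infinitely many) binary $\z$-lattices that $L$ fails to primitively represent, so no quinary lattice is primitively $2$-universal. Hence the minimal rank is at least six.

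For the upper bound it is enough to produce a single senary $\z$-lattice $L$ that is primitively $2$-universal, which shows the minimal rank is at most six. I would proceed as follows: (i) fix a candidate senary lattice $L$ of class number one, for instance a suitable diagonal lattice such as $I_6$; (ii) verify that $L_p$ primitively represents every binary $\z_p$-lattice for each prime $p$; and (iii) invoke the local-to-global principle recorded in the introduction following \cite[102:5]{OM}. By that principle, for any binary $\z$-lattice $M$ whose localization $M_p$ is primitively represented by $L_p$ at every prime $p$, the lattice $M$ is primitively represented by some lattice in the genus of $L$; since $L$ has class number one, this lattice is $L$ itself, so $L$ primitively represents every binary $\z$-lattice. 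For step (ii), at each prime $p\nmid 2\det L$ the localization $L_p$ is unimodular of rank six, and a unimodular $\z_p$-lattice of this rank primitively represents every binary $\z_p$-lattice; the finitely many primes dividing $\det L$, together with $p = 2$, are then handled by a direct check against the complete list of binary $\z_p$-lattices.

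The main obstacle is step (ii) at $p = 2$. As stressed in the introduction, there is no simple effective criterion for primitive representability over $\z_2$, and the local theory is delicate: Jordan splittings are not unique, and one must account for the even unimodular components $\mathbb{H}$ and $\mathbb{A}$ together with all scalings of the odd unary and binary blocks. Concretely, one must enumerate all binary $\z_2$-lattices up to isometry and confirm that the chosen $L_2$ primitively represents each of them, including the isotropic case $\mathbb{H}$ and the anisotropic case $\mathbb{A}$. This $2$-adic computation is the crux of the upper bound, and it is the same circle of local analysis that drives the complete classification into $201$ equivalence classes carried out in the subsequent sections.
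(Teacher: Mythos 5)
Your proposal is correct and follows essentially the same route as the paper: the lower bound comes from \cite{KKO98} together with Lemma~\ref{lem:not5} ruling out rank five, and the upper bound from the fact that $I_6$ has class number one and is primitively $2$-universal over every $\z_p$ (the content of Theorem~\ref{thm:pf41}), with the $2$-adic check left as a direct verification just as in the paper. Your observation that for odd $p$ the unimodular lattice $(I_6)_p$ splits off $\mathbb{H}\mathbin{\perp}\mathbb{H}$, which is primitively $2$-universal over $\z_p$, is exactly the mechanism the paper invokes in the proof of Theorem~\ref{thm:pf41}.
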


\begin{proof}
By Lemma~\ref{lem:not5}, the minimal rank of primitively $2$-universal $\z$-lattices is at least six. Furthermore, since $I_6$ is primitively $2$-universal over $\z_p$ for any prime $p$ and is of class number one, $I_6$ is primitively $2$-universal (see Theorem~\ref{thm:pf41}). The theorem follows from this.
\end{proof}

\begin{defn}\label{defn:core}
The prime $q$ in the proof of the Lemma \ref{lem:not5} such that $S_q U \ne (-1, -1)$ \textup(or equivalently $U_q$ is anisotropic\textup) is called the \textbf{core prime} of $L$ \textup(see also \cite[Lemma~2.4]{Oh03}\textup).

\end{defn}

The existence of a core prime of a quinary $\z$-lattice will play a significant role when we determine binary $\z$-lattices that are primitively represented by a quinary $\z$-lattice in Sections \ref{sec:pf1} and \ref{sec:pf2}.

\section{Candidates of primitively $2$-universal senary $\z$-lattices}
\label{sec:201}

In this section, we find all candidates of primitively $2$-universal senary $\z$-lattices. 

A $\z$-sublattice $\z e_1 + \cdots + \z e_k$ of a $\z$-lattice $L$ is called a $k$-\textbf{section} of $L$ if there are vectors $e_{k+1}, \dots, e_n$ in $L$ such that $\{e_1, \dots, e_n\}$ is a Minkowski reduced basis for $L$. Recall that a $k$-section of $L$ is not unique in general.

Let $L$ be a primitively $2$-universal senary $\z$-lattice. We find all possible $k$-sections of $L$ for each $k = 1, \dots, 6$, inductively. To find all possible candidates of $(k+1)$-sections containing a specific $k$-section, we apply Lemma~\ref{lem:escal} repeatedly, which is quite well known (see \cite[Lemma~2.1]{LOY20}).

\begin{lem}\label{lem:escal}
Let $M$ and $N$ be $\z$-lattices of rank $m$ and $n$ respectively. Suppose that the ordered set $\{e_1, \dots, e_n\}$ is a Minkowski reduced basis for $N$. Suppose further that $M$ is represented by $N$, but not by the $k$-section $\z e_1 + \cdots + \z e_k$.
\begin{enumerate}
\item We have $Q(e_{k+1}) \le C_4(k+1) \max\{\mu_m(M), Q(e_k)\}$, where the constant $C_4(j)$ depending only on $j$ is defined in \cite{Ca} and $\mu_m(M)$ is the $m$-th successive minimum of $M$ \textup(see \cite[Theorem~3.1]{Ca}\textup).

\item Suppose further that $n\le m+4$ and $N$ is $m$-universal. Then for any $x = x_1 e_1 + \cdots + x_n e_n \in L$, we have $Q(x)\ge Q(e_j)$ for any $j$ such that $x_j\ne 0$. Also, we have $Q(e_{k+1})\le \mu_m(M)$.
\end{enumerate}
\end{lem}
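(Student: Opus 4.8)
The plan is to prove the two parts separately: part (1) will follow from Minkowski's reduction theory applied to the successive minima of $N$, while part (2) requires first extracting structural information about $N$ from its $m$-universality. For part (1), I would fix a representation $\sigma\colon M\to N$, which exists since $M$ is represented by $N$. Because $M$ is not represented by the $k$-section $\z e_1+\cdots+\z e_k$, the image $\sigma(M)$ cannot be contained in it; and since $(\q e_1+\cdots+\q e_k)\cap N=\z e_1+\cdots+\z e_k$, this forces $\sigma(M)\not\subseteq\q e_1+\cdots+\q e_k$. Choosing linearly independent $v_1,\dots,v_m\in M$ that realize the successive minima of $M$, so that $Q(v_i)\le\mu_m(M)$ for all $i$, their images span $\sigma(M)\otimes\q$; hence some $\sigma(v_{i_0})$ lies outside $\q e_1+\cdots+\q e_k$. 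Then $e_1,\dots,e_k,\sigma(v_{i_0})$ are $k+1$ linearly independent vectors of $N$, with norms at most $\max\{Q(e_k),\mu_m(M)\}$ (using $Q(e_1)\le\cdots\le Q(e_k)$ for a reduced basis and $Q(\sigma(v_{i_0}))=Q(v_{i_0})\le\mu_m(M)$). Thus $\mu_{k+1}(N)\le\max\{Q(e_k),\mu_m(M)\}$, and combining this with the reduction bound $Q(e_{k+1})\le C_4(k+1)\,\mu_{k+1}(N)$ of \cite[Theorem~3.1]{Ca} gives the claim.

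For part (2), the first assertion --- that $Q(x)\ge Q(e_j)$ whenever $x_j\ne 0$ --- is where the hypotheses $n\le m+4$ and $m$-universality are essential. The key point is that $N$ represents $I_m$, producing $m$ mutually orthogonal vectors of norm one; since every nonzero vector of an integral positive definite lattice has norm at least one, this forces $\mu_1(N)=\cdots=\mu_m(N)=1$. Moreover, distinct non-proportional norm-one vectors of an integral lattice must be orthogonal (by the Cauchy--Schwarz inequality over $\z$), so the norm-one vectors span a unimodular sublattice $\cong I_r$ with $r\ge m$. This yields an orthogonal splitting $N\cong I_r\perp N'$, where $N'$ has minimum at least two and rank $n-r\le n-m\le 4$. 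I would then check that the reduced basis respects this splitting: the abundance of mutually orthogonal norm-one vectors forces $Q(e_i)=1$ for $i\le r$, and for $j>r$ the Minkowski condition $2|B(e_i,e_j)|\le Q(e_i)=1$ forces $B(e_i,e_j)=0$, so that $e_{r+1},\dots,e_n$ lie in $N'$ and form a Minkowski reduced basis of it.

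With this splitting, the leading-index inequality is checked factorwise. Writing $x=x^{(1)}+x^{(2)}$ with $x^{(1)}\in I_r$ and $x^{(2)}\in N'$, one has $Q(x)=Q(x^{(1)})+Q(x^{(2)})$; for an index $j\le r$ with $x_j\ne 0$ the orthonormal part already gives $Q(x)\ge x_j^2\ge 1=Q(e_j)$, while for $j>r$ the assertion reduces to the same statement for the rank-$\le 4$ lattice $N'$. For lattices of rank at most four a Minkowski reduced basis realizes the successive minima, and from $Q(e_i)=\mu_i$ the leading-index inequality then follows by a short descent on the index (if $Q(x^{(2)})<Q(e_j)$ one exhibits too many short independent vectors, contradicting the definition of $\mu_j$). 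Finally, the bound $Q(e_{k+1})\le\mu_m(M)$ is immediate: the vector $\sigma(v_{i_0})$ from part (1) has a nonzero coordinate at some index $j\ge k+1$, so the leading-index inequality together with $Q(e_1)\le\cdots\le Q(e_n)$ gives $Q(e_{k+1})\le Q(e_j)\le Q(\sigma(v_{i_0}))\le\mu_m(M)$.

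The main obstacle is precisely the leading-index inequality and its reduction to the rank-$\le 4$ case. The inequality is false for arbitrary Minkowski reduced bases --- indeed it would imply that a reduced basis always realizes the successive minima, which fails from rank five onward --- so the argument must genuinely use $m$-universality to split off the orthonormal block $I_r$ and confine the remaining complexity to an orthogonal complement $N'$ of rank at most four, where the classical coincidence of Minkowski reduction with the successive minima is available.
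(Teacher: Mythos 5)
Your proof is correct and follows essentially the same route as the paper: part (1) rests on the Cassels bound $Q(e_{k+1})\le C_4(k+1)\,\mu_{k+1}(N)$ together with the observation that a representation of $M$ missing the $k$-section forces $\mu_{k+1}(N)\le\max\{\mu_m(M),Q(e_k)\}$ (the paper phrases this contrapositively), and part (2) splits off the orthonormal block and reduces to the leading-index inequality for Minkowski reduced lattices of rank at most four. If anything, you are more careful than the paper at two points it glosses over: the paper simply \emph{assumes} the given reduced basis is compatible with the splitting $N=I_m\perp N'$, where you justify this via the orthogonality of non-proportional norm-one vectors, the splitting of unimodular sublattices, and the reduction condition $2|B(e_i,e_j)|\le Q(e_i)$; and the paper cites Cassels (Ch.~12, Lemma~1.2) for the leading-index inequality, where you correctly derive it by descent from the classical fact that reduced bases realize the successive minima in rank $\le 4$.
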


\begin{proof}
(1) Suppose on the contrary that
\[
 Q(e_{k+1}) > C_4(k+1) \max\{\mu_m(M), Q(e_k)\}\text{.}
\]
Since $Q(e_{k+1}) \le C_4(k+1) \mu_{k+1}(N)$, we have
\[
 \mu_{k+1}(N) > \max\{\mu_m(M), Q(e_k)\}\text{.}
\]
Since we are assuming that $M$ is represented by $N$, the above inequality implies that
\[
 M \rightarrow \operatorname{span}_\q\{x\in L : Q(x)<\mu_{k+1}(N)\}\cap N = \z e_1 \oplus \cdots \oplus \z e_k\text{,}
\]
which is a contradiction.

\noindent (2) Since $I_m$ is represented by $N$, we have $N = I_m\mathbin{\perp} N'$ for some $\z$-sublattice $N'$ of $N$. Assume that the ordered set $\{e_{m+1}, \dots, e_n\}$ is a Minkowski reduced basis for $N'$. Since $\operatorname{rank} N'\le 4$, for any vector
\[
 x = x_{m+1} e_{m+1} + \cdots + x_n e_n\in N'\text{,}
\]
we have $Q(x)\ge Q(e_j)$ for any $j$ such that $x_j \ne 0$ (see \cite[Lemma~1.2 of Ch.~12]{Ca}). Hence, the former assertion holds trivially. Furthermore, it is well known that $Q(e_j) = \mu_j(N)$ for any $j$ with $1\le j\le n$. Now, on the contrary to the latter assertion, suppose that $Q(e_{k+1}) > \mu_m(M)$, then $\mu_{k+1}(N) > \mu_m(M)$. Since we are assuming that $M$ is represented by $N$, this inequality implies that
\[
 M \rightarrow \operatorname{span}_\q\{x\in L : Q(x)<\mu_{k+1}(N)\}\cap N \subseteq \z e_1 \oplus \cdots \oplus \z e_k\text{,}
\]
which is a contradiction.
\end{proof}

For a $\z$-lattice $M$, a binary $\z$-lattice $\ell$ is called a primitive binary exception of $M$ if $\ell$ is not primitively represented by $M$. If a $\z$-lattice $M$ is not primitively $2$-universal, we define the \textbf{truant} of $M$ to be, among all the primitive binary exceptions of $M$, the least isometry class with respect to the following total order in terms of Gram matrices:
\[
 \begin{pmatrix}a_1 & b_1 \\ b_1 & c_1\end{pmatrix} \prec \begin{pmatrix}a_2 & b_2 \\ b_2 & c_2\end{pmatrix} \quad \Leftrightarrow \quad \begin{cases}
  c_1 < c_2\text{,} & \text{ or}\\
  c_1 = c_2\text{ and }a_1 < a_2\text{,} & \text{ or}\\
  c_1 = c_2\text{, }a_1 = a_2\text{, and }b_1 < b_2\text{,}
 \end{cases}
\]
where we assume that all binary lattices are Minkowski reduced, that is, $0 \le 2b_i \le a_i \le c_i$ for any $i = 1, 2$.

Now, we find all candidates of a $k$-section of a primitively $2$-universal senary $\z$-lattice for each $k = 1, \dots, 6$, inductively. Since $I_2$ is the truant of any lattice of rank less than two, clearly the $2$-section of any primitively $2$-universal $\z$-lattice must be $I_2$. Since $I_2$ cannot primitively represent $\langle 1, 2\rangle$, we must have $1\le Q(e_3)\le 2$ by the last lemma. Thus, the $3$-section must be either $I_3$ or $I_2\mathbin{\perp}\langle 2\rangle$. It is easily seen that the truant of each $3$-section is $\ang{2, 2}$ and $\mathbb{A}$, respectively. From this, the $4$-section must be one of
\[
 I_4\text{,}\quad I_3\mathbin{\perp}\langle 2\rangle\text{,}\quad\text{or}\quad I_2\mathbin{\perp}\mathbb{A}\text{.}
\]
By repeating this and by removing candidates that have the truant, we finally obtain the following list of $201$ candidates of isometry classes of primitively $2$-universal (P$2$U for short) senary $\z$-lattices:

{\renewcommand{\arraystretch}{1.2}
\begin{longtable}{c|c|c|c}
\caption{The $201$ candidates of P$2$U senary $\z$-lattices}\label{tblr:201}\\\hline
 Type & $5$-section & Candidates & Possible $k$'s \endhead
 \hline
 A & $I_5$ & $I_5\mathbin{\perp} \langle k\rangle$ & $k=1$, $2$ \\\hline
 \multirow{2}*{B} & \multirow{2}*{$I_4\mathbin{\perp}\langle2\rangle$} & $I_4\mathbin{\perp}\langle2,k\rangle$ & $2\le k\le 5$\\* \cline{3-4}
 & & $I_4\mathbin{\perp}\left(\begin{smallmatrix}2&1\\1&k\end{smallmatrix}\right)$ & $k=2$, $3$, $5$, $6$\\\hline
 C & $I_4\mathbin{\perp}\langle3\rangle$ & $I_4\mathbin{\perp}\left(\begin{smallmatrix}3&1\\1&k\end{smallmatrix}\right)$ & $k=3$\\ \hline
 \multirow{3}*[-4pt]{D} & \multirow{3}*[-4pt]{$I_3\mathbin{\perp}\langle2,2\rangle$} & $I_3\mathbin{\perp}\langle2,2,k\rangle$ & $2\le k\le 6$\\* \cline{3-4}
 & & $I_3\mathbin{\perp}\langle2\rangle\mathbin{\perp}\left(\begin{smallmatrix}2&1\\1&k\end{smallmatrix}\right)$ & $3\le k\le 8$\\* \cline{3-4}
 & & $I_3\mathbin{\perp}\left(\begin{smallmatrix}2&0&1\\0&2&1\\1&1&k\end{smallmatrix}\right)$\rule[-9pt]{0pt}{24pt} & $k=3$, $5\le k\le 7$\\ \hline
 \multirow{2}*[-4pt]{E} & \multirow{2}*[-4pt]{$I_3\mathbin{\perp}\left(\begin{smallmatrix}2&1\\1&2\end{smallmatrix}\right)$} & $I_3\mathbin{\perp}\left(\begin{smallmatrix}2&1\\1&2\end{smallmatrix}\right)\mathbin{\perp}\langle k\rangle$ & $2\le k\le 5$, $k=7$, $8$ \\* \cline{3-4}
 & & $I_3\mathbin{\perp}\left(\begin{smallmatrix}2&1&1\\1&2&1\\1&1&k\end{smallmatrix}\right)$\rule[-9pt]{0pt}{24pt} & $2\le k\le 9$ \\ \hline
 F & $I_3\mathbin{\perp}\langle2,3\rangle$& $I_3\mathbin{\perp}\langle2\rangle\mathbin{\perp}\left(\begin{smallmatrix}3&1\\1&k\end{smallmatrix}\right)$ & $k=3$ \\ \hline
 \multirow{3}*[-6pt]{G} & \multirow{3}*[-6pt]{$I_3\mathbin{\perp}\left(\begin{smallmatrix}2&1\\1&3\end{smallmatrix}\right)$} & $I_3\mathbin{\perp}\left(\begin{smallmatrix}2&1\\1&3\end{smallmatrix}\right)\mathbin{\perp}\langle k\rangle$ & $k=3$, $4$, $6\le k\le 18$\\* \cline{3-4}
 & & $I_3\mathbin{\perp}\left(\begin{smallmatrix}2&1&0\\1&3&1\\0&1&k\end{smallmatrix}\right)$\rule[-9pt]{0pt}{24pt} & $3\le k\le 19$\\* \cline{3-4}
 & & $I_3\mathbin{\perp}\left(\begin{smallmatrix}2&1&1\\1&3&1\\1&1&k\end{smallmatrix}\right)$\rule[-9pt]{0pt}{24pt} & $3\le k\le 19$\\ \hline 
 \multirow{4}*[-12pt]{H} & \multirow{4}*[-12pt]{$I_2\mathbin{\perp}\left(\begin{smallmatrix}2&1\\1&2\end{smallmatrix}\right)\mathbin{\perp}\langle2\rangle$} & $I_2\mathbin{\perp}\left(\begin{smallmatrix}2&1\\1&2\end{smallmatrix}\right)\mathbin{\perp}\langle2,k\rangle$ & $3\le k\le 5$\\* \cline{3-4}
 & & $I_2\mathbin{\perp}\left(\begin{smallmatrix}2&1\\1&2\end{smallmatrix}\right)\mathbin{\perp}\left(\begin{smallmatrix}2&1\\1&k\end{smallmatrix}\right)$ & $k=2$, $4$, $5$\\* \cline{3-4}
 & & $I_2\mathbin{\perp}\left(\begin{smallmatrix}2&1&0&1\\1&2&0&1\\0&0&2&0\\1&1&0&k\end{smallmatrix}\right)$\rule[-12pt]{0pt}{30pt} & $3\le k\le 6$\\* \cline{3-4}
 & & $I_2\mathbin{\perp}\left(\begin{smallmatrix}2&1&0&1\\1&2&0&1\\0&0&2&1\\1&1&1&k\end{smallmatrix}\right)$\rule[-12pt]{0pt}{30pt} & $k=3$, $4$, $6$\\ \hline
 \multirow{3}*[-13pt]{I} & \multirow{3}*[-13pt]{$I_2\mathbin{\perp}\left(\begin{smallmatrix}2&1&1\\1&2&1\\1&1&2\end{smallmatrix}\right)$} & $I_2\mathbin{\perp}\left(\begin{smallmatrix}2&1&1\\1&2&1\\1&1&2\end{smallmatrix}\right)\mathbin{\perp}\langle k\rangle$\rule[-9pt]{0pt}{24pt} & $k=2$\\* \cline{3-4}
 & & $I_2\mathbin{\perp}\left(\begin{smallmatrix}2&1&1&1\\1&2&1&1\\1&1&2&0\\1&1&0&k\end{smallmatrix}\right)$\rule[-12pt]{0pt}{30pt} & $2\le k\le 5$\\* \cline{3-4}
 & & $I_2\mathbin{\perp}\left(\begin{smallmatrix}2&1&1&1\\1&2&1&1\\1&1&2&1\\1&1&1&k\end{smallmatrix}\right)$\rule[-12pt]{0pt}{30pt} & $2\le k\le 4$\\ \hline
 J & $I_2\mathbin{\perp}\left(\begin{smallmatrix}2&1\\1&2\end{smallmatrix}\right)\mathbin{\perp}\langle3\rangle$ & $I_2\mathbin{\perp}\left(\begin{smallmatrix}2&1\\1&2\end{smallmatrix}\right)\mathbin{\perp}\left(\begin{smallmatrix}3&1\\1&k\end{smallmatrix}\right)$ & $k=3$ \\ \hline
 \multirow{4}*[-18pt]{K} & \multirow{4}*[-18pt]{$I_2\mathbin{\perp}\left(\begin{smallmatrix}2&1&1\\1&2&1\\1&1&3\end{smallmatrix}\right)$} & $I_2\mathbin{\perp}\left(\begin{smallmatrix}2&1&1\\1&2&1\\1&1&3\end{smallmatrix}\right)\mathbin{\perp}\langle k\rangle$\rule[-9pt]{0pt}{24pt} & $3\le k\le 20$, $22\le k\le 24$\\* \cline{3-4}
 & & $I_2\mathbin{\perp}\left(\begin{smallmatrix}2&1&1&0\\1&2&1&0\\1&1&3&1\\0&0&1&k\end{smallmatrix}\right)$\rule[-12pt]{0pt}{30pt} & $3\le k\le 24$\\* \cline{3-4} 
 & & $I_2\mathbin{\perp}\left(\begin{smallmatrix}2&1&1&1\\1&2&1&1\\1&1&3&0\\1&1&0&k\end{smallmatrix}\right)$\rule[-12pt]{0pt}{30pt} & $3\le k\le 25$\\* \cline{3-4}
 & & $I_2\mathbin{\perp}\left(\begin{smallmatrix}2&1&1&1\\1&2&1&1\\1&1&3&1\\1&1&1&k\end{smallmatrix}\right)$\rule[-12pt]{0pt}{30pt} & $3\le k\le 25$\\ \hline
\end{longtable}}

We refer to any of the above candidates by the expression such as (type) or (type)$_k$. For instance, $89$ candidates in the last four rows are called type K. Among them, $21$, $22$, $23$, and $23$ candidates in each of four rows are of types K\textsuperscript{i}, K\textsuperscript{ii}, K\textsuperscript{iii}, and K\textsuperscript{iv}, respectively. For instance, a lattice of type K\textsuperscript{iv} is any $\z$-lattice of the form $I_2\mathbin{\perp}\left(\begin{smallmatrix}2&1&1&1\\1&2&1&1\\1&1&3&1\\1&1&1&k\end{smallmatrix}\right)$. Finally, K$^{\mbox{\scriptsize iv}}_3$ stands for the lattice $I_2\mathbin{\perp}\left(\begin{smallmatrix}2&1&1&1\\1&2&1&1\\1&1&3&1\\1&1&1&3\end{smallmatrix}\right)$.

Moreover, when we refer to each of the candidates in Table~\ref{tblr:201}, we always assume that the basis $e_1, \dots, e_6$ corresponds exactly to the Gram matrix given in the table. For instance, when we consider the $\z$-lattice K$^{\mbox{\scriptsize iv}}_3 = \z e_1 + \dotsb + \z e_6$, we always assume that
\[
 (B(e_i, e_j)) = \left(\begin{smallmatrix}1&0&0&0&0&0\\0&1&0&0&0&0\\0&0&2&1&1&1\\0&0&1&2&1&1\\0&0&1&1&3&1\\0&0&1&1&1&3\end{smallmatrix}\right)\text{.}
\]
From now on, by abusing of terminology, the $s$-section of $\z e_1 + \cdots + \z e_6$ always implies the $\z$-sublattice $\z e_1 + \cdots + \z e_s$ for any $s = 1$, $2$, \dots, $6$.

\section{The proof of primitive $2$-universality (ordinary cases)}
\label{sec:pf1}

In this section, we prove that some candidates given in Table~\ref{tblr:201} are, in fact, primitively $2$-universal. Let $L = \z e_1 + \dotsb \z e_6$ be one of candidates of primitively $2$-universal $\z$-lattices, where the corresponding symmetric matrix $(B(e_i, e_j))$ is given in Table~\ref{tblr:201}. We prove the primitive $2$-universality of $L$ for the cases when $L$ itself or the $5$-section of $L$ is of class number one. Since the key ingredients of the proofs, which we will explain more precisely later, are quite similar to each other, we only provide the proofs of some representative cases. For the complete proofs, one may see the second author's dissertation \cite{Yo}.

\subsection{Case 1: $L$ has class number one}
\label{ssec:41}

As explained in the introduction, if a $\z$-lattice is of class number one, then it primitively represents any $\z$-lattice that is primitively represented over $\z_p$ for any prime $p$. Hence, if $L$ is of class number one, then $L$ is primitively $2$-universal if and only if $L_p$ primitively represents all binary $\z_p$-lattices for any prime $p$.

\begin{thm}\label{thm:pf41}
If $L$ has class number one, then $L$ is primitively $2$-universal. In fact, there are exactly $10$ such $\z$-lattices in Table~\ref{tblr:201}.
\end{thm}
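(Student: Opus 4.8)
The plan is to reduce the claim to a purely local statement. By the mass-formula argument sketched in the introduction (and recorded in \cite[102:5]{OM}), a class-number-one $\z$-lattice $L$ primitively represents every binary $\z$-lattice that is primitively represented by $L_p$ for every prime $p$. Hence, to prove that such an $L$ is primitively $2$-universal, it suffices to verify that $L_p$ primitively represents all binary $\z_p$-lattices for each prime $p$. Since a binary $\z_p$-lattice is primitively represented by $L_p$ whenever its localization splits off as an orthogonal direct summand whose complement still contains enough room, the verification is entirely local and, moreover, automatic at all but finitely many primes.

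First I would isolate exactly which $10$ candidates in Table~\ref{tblr:201} have class number one; these can be read off from the standard class-number tables (or computed directly, e.g.\ via \cite{Ki}), and $I_6$ is among them. For each such $L$, I would then carry out the local analysis prime by prime. At every odd prime $p$ not dividing $2\det L$, the lattice $L_p$ is unimodular of rank six, hence splits as an orthogonal sum of hyperbolic-type and unit pieces, and a rank-six unimodular $\z_p$-lattice primitively represents every binary $\z_p$-lattice by a routine Witt-type splitting argument; so these primes require no work. The finitely many remaining primes are the divisors of $2\det L$, and at each such $p$ one writes down a Jordan decomposition of $L_p$ and checks, case by case over the finitely many isometry classes of binary $\z_p$-lattices (parametrized by scale invariants and discriminant classes, with the extra dyadic invariants when $p=2$), that each is primitively represented.

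The concrete tool for the local checks is the local representation theory recorded in \cite{OM} and \cite{Ki}: a binary $\z_p$-lattice $\ell$ is primitively represented by $L_p$ precisely when one can embed $\ell$ as a primitive sublattice so that the orthogonal complement is again a $\z_p$-lattice of the correct rank and invariants, and one uses the abundance of unit square classes and the large rank ($6 = 2 + 4$) to guarantee the complementary four-dimensional space always splits off a suitable unimodular or modular companion. For $p = 2$ this is the delicate case: I would organize the binary $\z_2$-lattices by their scale $2^a\z_2$, norm, and whether they are even or odd, using the building blocks $\mathbb{H}$ and $\mathbb{A}$ introduced in the excerpt, and verify primitive representability in each Jordan-type subcase.

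The main obstacle is precisely the dyadic prime $p=2$. There the classification of $\z_2$-lattices is subtler (even versus odd, the anisotropic plane $\mathbb{A}$, the failure of diagonalizability), and primitivity of a representation is a genuinely stronger condition than mere representability, so one cannot simply quote the known criterion for representation over $\z_2$; the vector realizing $Q(v)=a$ must be primitive. I expect that handling $p=2$ will require exhibiting explicit primitive embeddings (or explicit complementary lattices) for each of the finitely many binary $\z_2$-lattice types against the explicit Jordan form of each of the $10$ lattices. Since this is a finite, if lengthy, verification, the essential content is organizational rather than conceptual, and I would present only a couple of representative dyadic subcases in detail, deferring the remaining routine checks to \cite{Yo}.
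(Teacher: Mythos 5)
Your proposal is correct and follows essentially the same route as the paper: use the class-number-one hypothesis to reduce to primitive representability of all binary $\z_p$-lattices by $L_p$ at every prime, handle odd primes by splitting off hyperbolic planes, and settle $p=2$ by a direct finite verification of the dyadic types. The only cosmetic difference is that the paper disposes of \emph{all} odd primes at once --- including those dividing $dL$ --- by observing that $\mathbb{H}\mathbin{\perp}\mathbb{H}$ is primitively represented by $L_p$ (the odd part of each of the ten discriminants being squarefree, the unimodular Jordan component has rank at least five), whereas you propose a Jordan-decomposition case check at the odd divisors of $\det L$.
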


\begin{proof}
One may easily show that
\[
 L \cong \text{A$_1$, A$_2$, B$^{\mbox{\scriptsize i}}_2$, B$^{\mbox{\scriptsize ii}}_2$, B$^{\mbox{\scriptsize ii}}_3$, D$^{\mbox{\scriptsize iii}}_3$, E$^{\mbox{\scriptsize ii}}_2$, E$^{\mbox{\scriptsize ii}}_3$, or I$^{\mbox{\scriptsize ii}}_3$.}
\]
Note that $dL = 1$, $2$, $4$, $3$, $5$, $8$, $4$, $7$, $4$, and $8$, respectively. Since $\mathbb{H}\mathbin{\perp}\mathbb{H}$ is primitively represented by $L_p$ for any odd prime $p$, $L_p$ is primitively $2$-universal. One may directly show that $L_2$ is also primitively $2$-universal.
\end{proof}

\begin{rmk}
In fact, for senary lattices, Lemma~\ref{thm:pf41} generalizes Budarina's result \cite{Bu11}, where $L$ is required to be of class number one and to be of squarefree odd discriminant. One may verify from the proof that there are exactly four such cases out of our $201$ candidates.
\end{rmk}

\subsection{Case 2: the $5$-section of $L$ has class number one and splits $L$ orthogonally}
\label{ssec:42}

In the remaining of this section, we consider the case when the $5$-section of $L$, say $M$, has class number one. Since $M$ is a primitive sublattice of $L$, $L$ primitively represents any $\z$-lattice that is primitively represented by $M$. Hence, if $M$ is of class number one, then $L$ primitively represents any $\z$-lattice that is locally primitively represented by $M$. Note that by Lemma~\ref{lem:not5}, $M_q$ is not primitively $2$-universal for some prime $q$, and hence there are infinitely many $\z$-lattices that are not primitively represented by $M$.

Recall that any core prime $q$ of $M$ satisfies $S_q U \ne (-1, -1)$, where $\q M \cong U\mathbin{\perp} \langle dM \rangle$ (see Definition \ref{defn:core}). One may easily check that the $5$-section of $L$ whose type is not of H has class number one, and the $5$-section $I_2\mathbin{\perp}\mathbb{A}\mathbin{\perp} \langle 2 \rangle$ of $L$ with type H has class number two. Note that the genus mate of this lattice is $I_4\mathbin{\perp} \langle 6 \rangle$.

Hereafter, $\alpha$, $\beta$ denote integers in $\z_p$, and $\epsilon$, $\delta$ denote units in $\z_p$, unless stated otherwise, where the prime $p$ could be easily verified from the context.

\begin{lem}\label{lem:5excep}
For the $5$-section $M$ of each type given in Table~\ref{tblr:5excep}, the core prime $q$ of $M$ and local structures over $\z_q$ of any binary $\z$-lattice $\ell$ that is not primitively represented by $M$ are given as in the table.
\end{lem}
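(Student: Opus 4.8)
The plan is to turn the global claim into a prime-by-prime local computation and then show that the only local obstruction sits at the core prime $q$. Since each $M$ in Table~\ref{tblr:5excep} has class number one, the local-global principle for representations by class-number-one lattices (\cite[102:5]{OM}) shows that a binary $\z$-lattice $\ell$ is primitively represented by $M$ if and only if $\ell_p$ is primitively represented by $M_p$ for every prime $p$. Consequently the binary lattices that are \emph{not} primitively represented by $M$ are exactly those whose localization fails at some prime, and it suffices to determine, prime by prime, where such a failure can occur.

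Next I would prove that the only such prime is the core prime $q$. For $p\nmid 2\,dM$ the lattice $M_p$ is unimodular of rank five over an odd $\z_p$; since two unimodular quinary $\z_p$-lattices of equal discriminant are isometric, $M_p\cong\mathbb{H}\perp\mathbb{H}\perp\langle dM\rangle$, so $M_p$ primitively represents $\mathbb{H}\perp\mathbb{H}$ and is therefore primitively $2$-universal by the argument used in Theorem~\ref{thm:pf41}. In particular the quaternary complement $U$ in $\q M=U\perp\langle dM\rangle$ is isotropic at every $p\nmid 2\,dM$, so the core prime $q$ must divide $2\,dM$. As the discriminants $dM$ arising from the $5$-sections of types A--K other than H are small (namely $1$, $2$, $3$, $4$, $5$, $6$, $7$, $9$), only finitely many primes remain; for each $p\mid 2\,dM$ with $p\ne q$ I would confirm directly from the Jordan splitting of $M_p$ that $M_p$ is still primitively $2$-universal. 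This step simultaneously verifies that $q$ is the unique finite place where $U$ is anisotropic, so no obstruction can hide at another prime.

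It then remains to compute, over $\z_q$, precisely which binary lattices $M_q$ fails to represent primitively. Here $\q M_q=U_q\perp\langle dM\rangle$ with $U_q$ anisotropic of dimension four, and the governing mechanism is that of Lemmas~\ref{lem:anisocut} and~\ref{lem:aniso4}: whenever a primitive representation sends one generator of a binary $\ell_q$ to a vector whose orthogonal complement in $M_q$ is an anisotropic quaternary $\z_q$-lattice, Lemma~\ref{lem:aniso4} prevents that complement from primitively representing entries lying sufficiently deep in $q$. Thus binary lattices with one generator of norm tied to $dM$ and the other sufficiently $q$-divisible are primitive binary exceptions, whereas for all remaining local structures I would exhibit explicit primitive representations into $M_q$. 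Recording this dichotomy Jordan symbol by Jordan symbol for each of the ten $5$-sections produces exactly the core prime and the local data tabulated in Lemma~\ref{lem:5excep}.

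The main obstacle is precisely this last step. As noted in the introduction there is no effective general criterion for primitive representability over $\z_q$, so the anisotropic case must be handled by hand for each $M_q$, balancing the obstruction coming from Lemma~\ref{lem:aniso4} against the explicit construction of primitive representations in the unobstructed cases; essentially all the work of the lemma lies in this local bookkeeping.
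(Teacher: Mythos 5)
Your proposal is correct and takes essentially the same route as the paper: reduce via the class-number-one local--global principle for primitive representations, verify that $M_p$ is primitively $2$-universal at every prime $p\ne q$ so that the core prime is the only possible obstruction, and then carry out the case-by-case computation over $\z_q$, which the paper organizes by splitting $M_q\cong\mathbb{H}\mathbin{\perp}N$ and computing $Q^\ast(N)$ rather than by invoking Lemma~\ref{lem:aniso4}, though this is the same local bookkeeping. Two harmless slips: the exclusion direction you sketch (that the tabulated structures genuinely fail to be represented) is not needed, since the table records necessary conditions only (see the remark following the lemma, where the converse is left open), and your list of discriminants sweeps in the $5$-sections of types C, F, and J, which this lemma does not cover.
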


\begin{table}[ht]
\caption{The core prime and the local structures}
\label{tblr:5excep}
\renewcommand{\arraystretch}{1.2}
\begin{tabular}{c|c|c|l}
 \hline
 Type & $M$ & $q$ & \hfil Local structures\\\hline
 A & $I_5$ & $2$ & $\ell_2\cong \langle 1, 8\alpha \rangle$ or $\fn (\ell_2)\subseteq 4\z_2$\\\hline
 B & $I_4\mathbin{\perp} \langle 2 \rangle$ & $2$ & $\ell_2\cong \langle 2, 16\alpha \rangle$ or $\fn (\ell_2)\subseteq 8\z_2$\\\hline
 D & $I_3\mathbin{\perp} \langle 2,2 \rangle$ & $2$ &
 \begin{tabular}{@{}l@{}}
  $\ell_2\cong \langle 1, 16\alpha \rangle$, $\langle 4, 16\alpha \rangle$, $\langle 20, 16\alpha \rangle$, or\\[-2pt]
  $\fn (\ell_2)\subseteq 16\z_2$
 \end{tabular}\\\hline
 E & $I_3\mathbin{\perp}\left(\begin{smallmatrix}2&1\\1&2\end{smallmatrix}\right)$ & $3$ & $\ell_3\cong \langle 3, 9\alpha \rangle$ or $\fs (\ell_3)\subseteq 9\z_3$\\\hline
 G & $I_3\mathbin{\perp}\left(\begin{smallmatrix}2&1\\1&3\end{smallmatrix}\right)$ & $5$ & $\ell_5\cong \langle 5, 25\alpha \rangle$ or $\fs (\ell_5)\subseteq 25\z_5$\\\hline
 I & $I_2\mathbin{\perp}\left(\begin{smallmatrix}2&1&1\\1&2&1\\1&1&2\end{smallmatrix}\right)$ & $2$ &
 \begin{tabular}{@{}l@{}}
  $\ell_2\cong\langle 1, 32\alpha \rangle$, $\langle 5, 16\epsilon \rangle$, $\langle 4, 32\alpha \rangle$, or\\[-2pt]
  $\fn (\ell_2)\subseteq 16\z_2$
 \end{tabular}\\\hline
 K & $I_2\mathbin{\perp}\left(\begin{smallmatrix}2&1&1\\1&2&1\\1&1&3\end{smallmatrix}\right)$\rule[-9pt]{0pt}{24pt} & $7$ & $\ell_7\cong \langle 7, 49\alpha \rangle$ or $\fs (\ell_7)\subseteq 49\z_7$\\\hline
\end{tabular}
\end{table}

\begin{proof}
One may easily verify that the prime $q$ given in Table~\ref{tblr:5excep} is the only core prime for each $5$-section $M$, and any binary $\z_p$-lattice is primitively represented by $M_p$ for any prime $p\ne q$. Hence, a binary $\z$-lattice $\ell$ is primitively represented by $M$ if and only if $\ell_q$ is primitively represented by $M_q$.

Since all the other cases can be proved in similar manners, we only provide the proofs of the cases when $L$ is of types I and K.

First, assume that $L$ is of type I. Note that $M_2\cong \mathbb{H}\mathbin{\perp} N$, where $N\cong \ang{3,7,12}$ and
\[
 Q^\ast(N) = \{3,7\}(\z_2^\times)^2\cup 2\z_2^\times\cup \{12, 20, 28\}(\z_2^\times)^2\cup 8\z_2^\times\text{.}
\]
Hence, $M_2$ primitively represents all binary lattices of the form $\ang{\alpha, \theta}$, $\mathbb{H}^{2^a}$, and $\mathbb{A}^{2^a}$, where $\theta\in Q^\ast(N)$ and $0\le a\le 2$. Moreover, $M_2$ primitively represents $\ang{\theta, \theta'}$, where $\theta$, $\theta'\in \{1, 5, 4\}$, $\ang{1, 16\epsilon}$, $\ang{5, 32\alpha}$, and $\ang{4, 16\epsilon}$.

Now, assume that $L$ is of type K. Note that $M_7\cong \mathbb{H}\mathbin{\perp} N$, where $N\cong \ang{1,1,7\cdot\Delta_7}$ and $Q^\ast(N) = \{1,\Delta_7,7\cdot\Delta_7\}(\z_7^\times)^2$. In this case, the lemma follows directly from $\ang{7,7}\cong \ang{7\cdot\Delta_7,7\cdot\Delta_7}$. This completes the proof.
\end{proof}

\begin{rmk}
In Table~\ref{tblr:5excep}, if a binary $\z$-lattice $\ell$ is not primitively represented by $M$, then the local structures of $\ell$ necessarily satisfies one of the conditions given in the fourth column in the same row. However, we do not know whether or not any binary $\z$-lattice satisfying one of the local structures is necessarily not primitively represented by $M$.
\end{rmk}

We first complete the proof of the case when the $5$-section splits $L$ orthogonally.

\begin{thm}\label{thm:pf42}
Suppose that $L$ has class number at least two. If $M$ is of class number one and splits $L$ orthogonally, then $L$ is primitively $2$-universal. In fact, there are exactly $51$ such lattices in Table~\ref{tblr:201}.
\end{thm}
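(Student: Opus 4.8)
The plan is to exploit the orthogonal splitting $L \cong M \mathbin{\perp} \langle a\rangle$, where $M$ is the $5$-section and $a = Q(e_6)$, together with two facts. First, since $M$ has class number one, any binary $\z$-lattice that is locally primitively represented by $M$ at every prime is (globally) primitively represented by $M$, hence by $L$. Second, from the proof of Lemma~\ref{lem:5excep}, for the core prime $q$ of $M$ the localization $M_p$ primitively represents \emph{every} binary $\z_p$-lattice whenever $p\ne q$. Consequently the only obstruction to primitive representation by $M$ lives at $q$, and it is precisely there that the extra summand $\langle a\rangle$ will be spent.

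So, given an arbitrary binary $\z$-lattice $\ell = \z x + \z y$, I would split into two cases. If $\ell_q$ is primitively represented by $M_q$, then $\ell$ is locally primitively represented by $M$ everywhere, and class number one finishes it. Otherwise $\ell_q$ is one of the finitely many forbidden local forms recorded in the fourth column of Table~\ref{tblr:5excep}. For such $\ell$ I would seek integers $r,s$ and an absorbing representation $x\mapsto m_1 + r e_6$, $y\mapsto m_2 + s e_6$ with $m_1, m_2\in M$; since $e_6\perp M$, this forces $\z m_1 + \z m_2$ to carry the Gram matrix
\[
 \ell^{(r,s)} := \begin{pmatrix} Q(x)-ar^2 & B(x,y)-ars \\ B(x,y)-ars & Q(y)-as^2 \end{pmatrix}\text{,}
\]
with $\det \ell^{(r,s)} = \det\ell - a\bigl(Q(x)s^2 - 2B(x,y)rs + Q(y)r^2\bigr)$. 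Note that if $\z m_1 + \z m_2$ is a primitive sublattice of $M$, then its image is automatically primitive in $L = M\oplus \z e_6$, because the $2\times 2$ minors already taken in the $M$-coordinates have $\gcd$ equal to $1$, so adjoining the sixth coordinate cannot change this. Hence it suffices to choose $r,s$ so that $\ell^{(r,s)}$ is a positive definite binary $\z$-lattice that is primitively represented by $M$.

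Here comes the decisive simplification. By the second fact above, $\ell^{(r,s)}$ is primitively represented by $M_p$ for \emph{every} prime $p\ne q$, whatever $r,s$ are. So I only need to arrange (i) $\det\ell^{(r,s)} > 0$ and (ii) $\ell^{(r,s)}_q$ primitively represented by $M_q$; then class number one of $M$ upgrades the resulting local primitive representation to a global one and produces the desired primitive map $\ell\to L$. For each forbidden local type at $q$ in Table~\ref{tblr:5excep}, I would choose $r,s$ — typically $r=0$ and $s$ a small power of $q$ — according to the residue of $a$ modulo the relevant power of $q$, so that subtracting $as^2$ moves the $q$-adic form out of the forbidden slot (for instance out of $\langle 5, 25\alpha\rangle$ or $\fs\subseteq 25\z_5$ for type G, and out of $\langle 7,49\alpha\rangle$ or $\fs\subseteq 49\z_7$ for type K).

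I expect the main obstacle to be purely local-and-finite: matching the correct $q$-adic congruence for each forbidden form while \emph{simultaneously} keeping $\det\ell^{(r,s)} > 0$. This positivity is tight only for the finitely many forbidden $\ell$ of small determinant, which must be checked by hand, and it is also subtler at the even core prime (types B, D, and especially I, where $q=2$ and $a$ may itself be even, forcing a choice with $r\ne 0$); the representative cases are thus types I and K. Finally, the count of $51$ follows by tallying the candidates in Table~\ref{tblr:201} whose $5$-section splits off orthogonally and has class number one while $L$ itself has class number at least two: these are $\mathrm B^{\mathrm i}_k$ with $k=3,4,5$, all of $\mathrm D^{\mathrm i}$, all of $\mathrm E^{\mathrm i}$, all of $\mathrm G^{\mathrm i}$, $\mathrm I^{\mathrm i}_2$, and all of $\mathrm K^{\mathrm i}$, numbering $3+5+6+15+1+21 = 51$. (Type A also splits orthogonally but every such candidate has class number one and is already covered by Theorem~\ref{thm:pf41}, while type H is excluded because its $5$-section has class number two.)
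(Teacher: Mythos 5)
Your proposal is correct and takes essentially the same route as the paper's proof: decompose $L \cong M \mathbin{\perp} \langle a\rangle$, use Lemma~\ref{lem:5excep} to reduce the obstruction to the core prime $q$ (with $M_p$ primitively $2$-universal for $p\ne q$), and absorb the defect via your $\ell^{(r,s)}$ — the paper's $\ell'$ and $\ell''$ are precisely the cases $(r,s)=(1,0)$ and $(0,1)$ — with class number one of $M$ promoting local to global primitive representations, automatic primitivity in $L$ by the same minor-gcd observation, and small-determinant cases checked by hand. Your tally $3+5+6+15+1+21=51$ over types B\textsuperscript{i} (excluding B$^{\mbox{\scriptsize i}}_2$), D\textsuperscript{i}, E\textsuperscript{i}, G\textsuperscript{i}, I\textsuperscript{i}, and K\textsuperscript{i} agrees exactly with the paper's count.
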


\begin{proof}
By the above lemma, $L$ is of type
\[
\text{B\textsuperscript{i}(except B$^{\mbox{\scriptsize i}}_2$), D\textsuperscript{i}, E\textsuperscript{i}, G\textsuperscript{i}, I\textsuperscript{i}, or K\textsuperscript{i}.}
\]
Let $\ell\cong \left(\begin{smallmatrix}a&b\\b&c\end{smallmatrix}\right)$ be a $\z$-lattice which is not primitively represented by $M$. We assume that $\ell$ is Minkowski reduced, that is, $0\le 2b\le a\le c$. To show that $\ell$ is primitively represented by $L$, we may consider two $\z$-lattices
\[
 \ell' \cong \begin{pmatrix}a-k&b\\b&c\end{pmatrix}\text{,}\qquad \ell'' \cong \begin{pmatrix}a&b\\b&c-k\end{pmatrix}\text{.}
\]
If either $\ell'$ or $\ell''$ is primitively represented by $M$, then clearly $\ell$ is primitively represented by $L\cong M\mathbin{\perp}\ang{k}$. Moreover, $\ell'$ ($\ell''$) is primitively represented by $M$ if and only if $\ell'_q$ ($\ell''_q$, repsectively) is primitively represent by $M_q$ for the core prime $q$ of $M$.

Since all the other cases can be proved in similar manners, we only provide the proofs of the cases when $L$ is of types D\textsuperscript{i} or G\textsuperscript{i}.

First, assume that $L$ is of type D\textsuperscript{i}. By Lemma~\ref{lem:5excep}, we may assume that
\[
 \ell_2\cong\ang{1, 16\alpha}\text{,}\quad \ang{4, 16\alpha}\text{,}\quad \ang{20, 16\alpha}\text{,} \quad \text{or} \quad \fn (\ell_2)\subseteq 16\z_2\text{.}
\]
Suppose that $k = 3$ or $5$. Assume that $a\not\equiv 0\Mod{16}$. Since $d\ell''_2 \not\equiv 0\Mod{16}$, $\ell''_2$ is primitively represented by $M_2$. Hence, $\ell''$ is primitively represented by $M$ if $c\ge 7$, in which case $\ell''$ is positive definite. Now, assume that $a\equiv 0\Mod{16}$. Since $a-k$ is odd, $\ell'_2$ is split by $\ang{a-k}$. Furthermore, since $a-k \not\equiv 1\Mod8$, $\ell'_2$ is primitively represented by $M_2$. Hence, $\ell'$ is primitively represented by $M$. One may directly check that $\ell$ is primitively represented by $L$ if $c\le 6$.

Now, suppose that $k = 2$ or $6$. Assume that $a\not\equiv 0\Mod8$. Since $d\ell''_2 \not\equiv 0\Mod{16}$, $\ell''_2$ is primitively represented by $M_2$. Hence, $\ell''$ is primitively represented by $M$ if $c\ge 9$, in which case $\ell''$ is positive definite. Now, assume that $c$ is odd. Since $c \equiv 1\Mod8$, $\ell''_2$ is split by $\ang{c-k}$. Furthermore, since $c-k \not\equiv 1\Mod8$, $\ell''$ is primitively represented by $M$ if $c\ge 9$. Finally, assume that $a\equiv 0\Mod8$ and $c$ is even. Since $\ell_2$ is not unimodular, we must have $\fs (\ell_2) \subseteq 4\z_2$, which implies
\[
 c-k \equiv 2\Mod4 \quad\text{and}\quad \fs (\ell''_2) = 2\z_2\text{.}
\]
Hence, $\ell''$ is primitively represented by $M$ if $c\ge 9$. One may directly check that $\ell$ is primitively represented by $L$ if $c\le 8$.

Finally, suppose that $k = 4$. Assume that $a\not\equiv 0\Mod4$. Since $d\ell''_2 \not\equiv 0\Mod{16}$, $\ell''_2$ is primitively represented by $M_2$. Hence, $\ell''$ is primitively represented by $M$ if $c\ge 6$, in which case $\ell''$ is positive definite. Now, assume that $c$ is odd. Since $c \equiv 1\Mod8$, $\ell''_2$ is split by $\ang{c-4}$. Furthermore, since $c-4 \not\equiv 1\Mod8$, $\ell''$ is primitively represented by $M$ if $c\ge 6$. Hence, we may assume that $a\equiv 0\Mod4$ and $c$ is even. Since $\ell_2$ is not unimodular, we have
\[
 \fs (\ell_2) \subseteq 4\z_2 \quad\text{and}\quad d\ell_2 \equiv 0\Mod{64}\text{.}
\]
If $a\not\equiv 0\Mod{16}$, then $d\ell''_2 \not\equiv 0\Mod{64}$; hence $\ell''$ is primitively represented by $M$ if $c\ge 6$. Now, assume that $a\equiv 0\Mod{16}$. Since $\fs (\ell'_2) = 4\z_2$, $\ell'_2$ is split by $\ang{a-4}$. Furthermore, since $a-4 \equiv -4\Mod{16}$, $\ell'_2$ is primitively represented by $M_2$. Hence, $\ell'$ is primitively represented by $M$ for $a\ge 16$. One may directly check that $\ell$ is primitively represented by $L$ if $c\le 5$.

Next, assume that $L$ is of type G\textsuperscript{i}. By Lemma~\ref{lem:5excep}, we may assume that
\[
 \ell_5\cong \ang{5, 25\alpha} \quad \text{or} \quad \fs (\ell_5)\subseteq 25\z_5\text{.}
\]
Suppose that $k\ne 10$ or $15$. Since $\fs (\ell''_5) = \z_5$, $\ell''_5$ is primitively represented by $M_5$. Hence, $\ell''$ is primitively represented by $M$ if $c\ge 25$, in which case $\ell''$ is positive definite. One may directly check that $\ell$ is primitively represented by $L$ if $c\le 24$.

Now, suppose that $k = 10$ or $15$. Assume that $a\not\equiv 0\Mod{25}$. Since $d\ell''_5 \not\equiv 0\Mod{125}$, $\ell''_5$ is primitively represented by $M_5$. Hence, $\ell''$ is primitively represented by $M$ if $c\ge 21$, in which case $\ell''$ is positive definite. Now, assume that $a\equiv 0\Mod{25}$. Since $\fs (\ell'_5) = 5\z_5$, $\ell'_5$ is split by $\ang{a-k}$. Furthermore, since $a-k\equiv 15$ or $10\Mod{25}$, $\ell'_5$ is primitively represented by $M_5$. Hence, $\ell'$ is primitively represented by $M$ since $a\ge 25$. One may directly check that $\ell$ is primitively represented by $L$ if $c\le 20$.
\end{proof}

\begin{rmk}
If $L\cong$ \textup{C}$_3$ or \textup{F}$_3$, then we may take $M \cong I_3\mathbin{\perp}\left(\begin{smallmatrix}3&1\\1&3\end{smallmatrix}\right)$, which is a primitive sublattice of $L$. If $L\cong$ \textup{H$^{\mbox{\scriptsize iii}}_3$}, then we may take $M \cong I_2\mathbin{\perp}\left(\begin{smallmatrix}2&1&1\\1&2&1\\1&1&3\end{smallmatrix}\right)$, which is a primitive sublattice of $L$. Since the class number of $M$ is one and $M$ splits $L$ orthogonally, the proofs of these three candidates are quite similar to that of the above theorem.
\end{rmk}

Let $R = \z$ or $\z_p$ for some prime $p$. Let $O$ be an $R$-lattice and let $\mathfrak B=\{e_1, \dots, e_n\}$ be the fixed (ordered) basis for the $R$-lattice $O$.
We define
\[
O'= R\begin{bmatrix}
  a_{11} & \dots & a_{1n}\\
  \vdots & & \vdots\\
  a_{m1} & \dots & a_{mn}
 \end{bmatrix}
\]
by the $R$-sublattice of $O$ generated by $m$ vectors $a_{11}e_1 + \cdots + a_{1n}e_n, \dots,\allowbreak a_{m1}e_1 + \cdots + a_{mn}e_n$.
Note that if the rank of $O'$ is $m$, then the symmetric matrix corresponding to $O'$ is that
\[
M_{O'}= \begin{pmatrix}
  a_{11} & \dots & a_{1n}\\
  \vdots & & \vdots\\
  a_{m1} & \dots & a_{mn}
 \end{pmatrix} M_O  \begin{pmatrix}
  a_{11} & \dots & a_{m1}\\
  \vdots & & \vdots\\
  a_{1n} & \dots & a_{mn}
 \end{pmatrix}\text{.}
\]
When only the corresponding symmetric matrix $M_O$ is given instead of the basis for $O$, we assume that $\mathfrak B$ is the basis for $O$ such that $(B(e_i,e_j))=M_O$.

\subsection{Case 3: the $5$-section of $L$ has class number one and does not split $L$ orthogonally}
\label{ssec:43}

Recall that we are assuming that $L$ is one of  $201$ candidates of primitively $2$-universal senary $\z$-lattices given in Table~\ref{tblr:201}, and $\mathfrak B=\{e_1,\dots,e_6\}$ is the basis for $L$ such that the corresponding symmetric matrix $(B(e_i,e_j))$ is given in Table~\ref{tblr:201}. Furthermore, $M=\z e_1+\dots+\z e_5$ is the $5$-section of $L$.

\begin{lem}\label{lem:357super}
Let $L$ and $N$ be $\z$-lattices given as follows:
\begin{enumerate}
\item The $\z$-lattice $L$ is of type \textup{E\textsuperscript{ii}}, and $N \cong qI_3\mathbin{\perp}\mathbb{A}$.
\item The $\z$-lattice $L$ is of type \textup{G\textsuperscript{ii}} or \textup{G\textsuperscript{iii}}, and $N \cong qI_3\mathbin{\perp}\left(\begin{smallmatrix}2&1\\1&3\end{smallmatrix}\right)$.
\item The $\z$-lattice $L$ is of type \textup{K\textsuperscript{ii}}, \textup{K\textsuperscript{iii}} or \textup{K\textsuperscript{iv}}, and $N \cong qI_2\mathbin{\perp}\left(\begin{smallmatrix}3&1&1\\1&5&-2\\1&-2&5\end{smallmatrix}\right)$.
\end{enumerate}
Here, $q$ is the core prime of the $5$-section $M$ of $L$.
Let $\ell \cong \left(\begin{smallmatrix}a&b\\b&c\end{smallmatrix}\right)$ be a binary $\z$-lattice.
If $\ell' \cong \left(\begin{smallmatrix}qa-ds^2 & qb-dst\\qb-dst & qc-dt^2\end{smallmatrix}\right)$ is positive definite and is primitively represented by $N$ for some integers $s$ and $t$, where $d = dL$ is the discriminant of $L$, then $\ell$ is primitively represented by $L$.
\end{lem}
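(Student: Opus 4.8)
The plan is to exhibit $N$ as a rescaled dual of the $5$-section $M$ and then to transfer a primitive representation across the orthogonal splitting of $\q L$ determined by $M$. First I would record the identity $N\cong qM^{\#}$, where $M^{\#}$ is the dual lattice of $M$ (the lattice on $\q M$ with $B(M^{\#},M)\subseteq\z$). Writing $M=I_j\perp T$ with $T$ the non-unimodular part, one has $dM=dT=q$ in every case, so the corresponding block of $N$ is the adjoint $qT^{\#}$ of $T$ while the $I_j$-block is rescaled to $qI_j$; hence $N=q(I_j\perp T^{\#})=qM^{\#}$, which is verified directly for the three families. In particular $M\subseteq M^{\#}$ with $M^{\#}/M\cong\z/q\z$.

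Next I would decompose $\q L=\q M\perp\q w'$, where $w'$ is the orthogonal projection of $e_6$ onto $(\q M)^{\perp}$, and set $w_0:=e_6-w'\in\q M$. A short computation gives $Q(w')=dL/dM=d/q$, and since $w_0$ is the $\q M$-component of the lattice vector $e_6$ one has $B(w_0,M)\subseteq\z$, i.e. $w_0\in M^{\#}$. The hypothesis that $M$ does not split $L$ orthogonally is exactly the statement $w_0\notin M$, so $w_0$ generates $M^{\#}/M\cong\z/q\z$. I also record the identity $qQ(w_0)=qQ(e_6)-d\equiv -d\pmod q$, which drives the integrality check below. Given a primitive representation $\ell'\to N$, dividing the bilinear form by $q$ turns it into a primitive sublattice $\z X_M+\z Y_M$ of $M^{\#}$ whose Gram matrix (for $B_{M^{\#}}$) equals $\tfrac1q\ell'=\ell-\tfrac dq\left(\begin{smallmatrix}s^2&st\\st&t^2\end{smallmatrix}\right)$; positive definiteness of $\ell'$ guarantees this is a genuine rank-two sublattice.

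I would then set $X=X_M+sw'$ and $Y=Y_M+tw'$. Since $w'\perp\q M$ and $Q(w')=d/q$, the correction terms cancel and the Gram matrix of $\{X,Y\}$ is precisely $\ell$. To see $X,Y\in L$, write $X_M=m+\sigma w_0$ with $m\in M$; reducing $Q_N(X_M)=qa-ds^2$ modulo $q$ and using $qQ(w_0)\equiv -d$ gives $-d\sigma^2\equiv -ds^2$, hence $\sigma\equiv\pm s\pmod q$ because $\gcd(d,q)=1$ and $q$ is odd, and likewise $\tau\equiv\pm t$ for $Y_M$. The mixed relation forces a common sign, which after possibly replacing $(X_M,Y_M)$ by $(-X_M,-Y_M)$ we take to be $+$. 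Then $\sigma w_0\equiv s w_0\pmod M$, so $X\equiv s(w_0+w')=se_6\pmod M$, whence $X\in L$, and similarly $Y\in L$.

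It remains to prove that $\{X,Y\}$ is primitive in $L$, which I expect to be the main obstacle. For $p\ne q$ this is immediate: $M_p$ is unimodular, so $M_p^{\#}=M_p$, $w'\in L_p$, and $L_p=M_p\perp\z_p w'$; since $\{X_M,Y_M\}$ is primitive in $N_p$, extending it to a basis of $M_p$ and appending $w'$ displays $\{X,Y\}$ as part of a basis of $L_p$. At $p=q$, however, $w'\notin L_q$ and $M_q$ is genuinely non-unimodular, so primitivity does not transfer formally. There I would reduce to showing that the reductions $\overline X,\overline Y$ are linearly independent in $L_q/qL_q$, and verify this from the explicit Jordan decomposition of $L_q$, using that $X_M,Y_M$ are primitive in $N_q=qM_q^{\#}$ and lie in the prescribed cosets $sw_0,\ tw_0$ modulo $M_q$. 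Carrying out this single local computation uniformly across types E, G and K is the technical heart of the proof.
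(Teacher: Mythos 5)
Your proposal is correct, and it reaches the lemma by a more invariant route than the paper's. The paper's proof starts by asserting a representation $\psi : L^q \to N\mathbin{\perp}\langle d\rangle$ with $\psi(L^q)\cap N = M^q$ and then works entirely in coordinates: for each type it writes the primitive representation $\ell'\to N$ as an explicit coefficient matrix, uses a rank-one congruence of the non-unimodular block of $N$ modulo $q$ to fix the sign of $(s,t)$, solves an explicit integral linear system to produce a sublattice $O\subseteq L$ isometric to $\ell$, and verifies primitivity of $O$ globally by column operations on the coefficient matrix. Your identity $N\cong qM^{\#}$ is correct in all three cases (for type K, $q T^{\#}$ with $T\cong\left(\begin{smallmatrix}2&1&1\\1&2&1\\1&1&3\end{smallmatrix}\right)$ is the adjoint $\left(\begin{smallmatrix}5&-2&-1\\-2&5&-1\\-1&-1&3\end{smallmatrix}\right)$, which a permutation and a sign change carry to the stated block), and it explains conceptually where the paper's $\psi$ comes from: it is exactly $L\subseteq M^{\#}\mathbin{\perp}\z w'$ rescaled by $q$. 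Your congruences $\sigma\equiv\pm s$, $\tau\equiv\pm t$, $\sigma\tau\equiv st\pmod q$ (valid since $\gcd(d,q)=1$ and $q\in\{3,5,7\}$ is odd) are the invariant form of the paper's sign adjustment of $(s,t)$. What your route buys is uniformity --- one argument covering types E, G, K instead of a computation per Gram matrix; what the paper's buys is a single global primitivity check in place of your prime-by-prime analysis.

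The one step you left unexecuted --- primitivity at $p=q$, which you call the technical heart and propose to settle with per-type Jordan decompositions --- is in fact immediate from data you already have, uniformly in the type. Since $e_6\notin\q M$ you have the module decomposition $L_q = M_q\oplus\z_q e_6$. Write $X = u + se_6$ and $Y = v + te_6$ with $u, v\in M_q$ (possible because $X\equiv se_6$ and $Y\equiv te_6$ modulo $M$), so that $X_M = u + sw_0$ and $Y_M = v + tw_0$. If $\lambda X + \mu Y\in qL_q$ with $\lambda,\mu\in\z_q$ not both in $q\z_q$, then $\lambda u + \mu v\in qM_q$ and $\lambda s + \mu t\in q\z_q$, whence
\[
 \lambda X_M + \mu Y_M = (\lambda u + \mu v) + (\lambda s + \mu t)w_0 \in qM_q + q\z_q w_0 \subseteq qM_q^{\#}\text{,}
\]
contradicting the primitivity of $\{X_M, Y_M\}$ in $M_q^{\#}$, equivalently in $N_q$. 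So $\overline{X}$, $\overline{Y}$ are linearly independent in $L_q/qL_q$, and together with your (correct) argument at $p\ne q$, the sublattice $\z X + \z Y\cong\ell$ is primitive in $L$. With these two lines inserted, your proof is complete.
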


\begin{proof}
One may easily show that there is a representation $\psi: L^q \to N\mathbin{\perp} \langle d\rangle$ such that $\psi(L^q)\cap N=M^q$.

Since all the other cases can be proved in similar manners, we only provide the proof of the case when $L$ is of type K\textsuperscript{iv}. Note that $M \cong I_2 \mathbin{\perp} \left(\begin{smallmatrix}2&1&1\\1&2&1\\1&1&3\end{smallmatrix}\right)$ and $q=7$. Since $N$ primitively represents $\ell'$, there are integers $c_i$'s and $d_i$'s for $i=1,\dots,5$ such that the primitive sublattice of $N$
\[
 \z\begin{bmatrix}c_1 & c_2 & c_3 & c_4 & c_5\\d_1 & d_2 & d_3 & d_4 & d_5\end{bmatrix} \cong \ell' \cong \begin{pmatrix}7a-(7k-5)s^2 & 7b-(7k-5)st\\7b-(7k-5)st & 7c-(7k-5)t^2\end{pmatrix}\text{,}
\]
where $\left(\begin{smallmatrix}c_1 & c_2 & c_3 & c_4 & c_5\\d_1 & d_2 & d_3 & d_4 & d_5\end{smallmatrix}\right)$ is a primitive matrix. Note that
\[
 \left(\begin{smallmatrix}3&1&1\\1&5&-2\\1&-2&5\end{smallmatrix}\right) \equiv 3\left(\begin{smallmatrix}1\\-2\\-2\end{smallmatrix}\right)\begin{pmatrix}1&-2&-2\end{pmatrix}\Mod7\text{.}
\]
Thus, we have
\[
 \left.\begin{aligned}
  3(c_3 - 2c_4 - 2c_5)^2 & \equiv 5s^2\\
  3(c_3 - 2c_4 - 2c_5)(d_3 - 2d_4 - 2d_5) & \equiv 5st\\
  3(d_3 - 2d_4 - 2d_5)^2 & \equiv 5t^2
 \end{aligned}\right\} \pmod7\text{.}
\]
Hence, after replacing $(s, t)$ by $(-s, -t)$, if necessary, we may assume that
\[
 c_3 - 2c_4 - 2c_5 + 2s \equiv d_3 - 2d_4 - 2d_5 + 2t \equiv 0\pmod7\text{.}
\]
Therefore, there are integers $a_3$, $a_4$, $a_5$, $b_3$, $b_4$, and $b_5$ satisfying
\[
 \begin{pmatrix}c_3 & d_3\\c_4 & d_4\\c_5 & d_5\\s & t\end{pmatrix} = \begin{pmatrix}-1&0&2&0\\2&1&1&1\\1&-1&0&0\\0&0&0&1\end{pmatrix} \begin{pmatrix}a_3 & b_3\\a_4 & b_4\\a_5 & b_5\\s & t\end{pmatrix}\text{.}
\]
Now, consider the sublattice
\[
 O = \z\begin{bmatrix}c_1 & c_2 & a_3 & a_4 & a_5 & s\\d_1 & d_2 & b_3 & b_4 & b_5 & t\end{bmatrix}
\]
of $L$. Since
\begin{multline*}
 \begin{pmatrix}a_3 & a_4 & a_5 & s\\b_3 & b_4 & b_5 & t\end{pmatrix} \begin{pmatrix}14&7&7&7\\7&14&7&7\\7&7&21&7\\7&7&7&7q\end{pmatrix} \begin{pmatrix}a_3 & b_3\\a_4 & b_4\\a_5 & b_5\\s & t\end{pmatrix}\\
 = \begin{pmatrix}c_3 & c_4 & c_5 & s\\d_3 & d_4 & d_5 & t\end{pmatrix} \begin{pmatrix}3&1&1&0\\1&5&-2&0\\1&-2&5&0\\0&0&0&7q-5\end{pmatrix} \begin{pmatrix}c_3 & d_3\\c_4 & d_4\\c_5 & d_5\\s & t\end{pmatrix}\text{,}
\end{multline*}
the $\z$-lattice $O$ is isometric to $\ell$. Now, since
\[
 \begin{pmatrix}c_1 & c_2 & c_3 & c_4 & c_5\\d_1 & d_2 & d_3 & d_4 & d_5\end{pmatrix} = \begin{pmatrix}c_1 & c_2 & -a_3 + 2a_5 & 2a_3 + a_4 + a_5 + s & a_3 - a_4\\d_1 & d_2 & -b_3 + 2b_5 & 2b_3 + b_4 + b_5 + t & b_3 - b_4\end{pmatrix}
\]
is a primitive matrix, so is
\[
 \begin{pmatrix}c_1 & c_2 & -a_3 + 2a_5 & 2a_3 + a_4 + a_5 + s & a_3 - a_4 & a_5\\d_1 & d_2 & -b_3 + 2b_5 & 2b_3 + b_4 + b_5 + t& b_3 - b_4 & b_5\end{pmatrix}\text{.}
\]
Therefore, the matrix
\[
 \begin{pmatrix}c_1 & c_2 & a_3 & a_4 & a_5 & s\\d_1 & d_2 & b_3 & b_4 & b_5 & t\end{pmatrix}
\]
is primitive, which implies that $O$ is a primitive sublattice of $L$. This completes the proof.
\end{proof}

\begin{thm}\label{thm:pf43}
If $L$ is of type
\[
	 \text{\textup{E\textsuperscript{ii}, G\textsuperscript{ii}, G\textsuperscript{iii}, K\textsuperscript{ii}, K\textsuperscript{iii}, or K\textsuperscript{iv}},}
\]
then $L$ is primitively $2$-universal. There are exactly $110$ such $\z$-lattices in Table~\ref{tblr:201}.
\end{thm}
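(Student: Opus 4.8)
The plan is to run, for each of the six types, the reduction furnished by Lemma~\ref{lem:357super}, using Lemma~\ref{lem:5excep} to pin down the only binary lattices that can cause trouble. Fix a binary $\z$-lattice $\ell\cong\left(\begin{smallmatrix}a&b\\b&c\end{smallmatrix}\right)$, Minkowski reduced. If $\ell$ is primitively represented by the $5$-section $M$, then it is primitively represented by $L$ since $M$ is a primitive sublattice of $L$; so I may assume it is not, whence by Lemma~\ref{lem:5excep} the localization $\ell_q$ at the core prime $q$ of $M$ has one of the special shapes tabulated there. By Lemma~\ref{lem:357super} it then suffices to exhibit integers $s$, $t$ for which
\[
 \ell'\cong\begin{pmatrix}qa-ds^2 & qb-dst\\ qb-dst & qc-dt^2\end{pmatrix}=q\,\ell-d\,ww^t\text{,}\qquad w=(s,t)^t\text{,}
\]
is positive definite and primitively represented by the auxiliary lattice $N$ attached to the type of $L$, where $d=dL$.

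The first structural fact I would exploit is that $\det N=q^4$ in every one of the six cases, so $N_p$ is unimodular of rank five for every prime $p\ne q$. For odd $p\ne q$ such a lattice splits off two hyperbolic planes, hence contains $\mathbb{H}\mathbin{\perp}\mathbb{H}$ as a primitive sublattice and is primitively $2$-universal over $\z_p$; at $p=2$ (which is distinct from $q\in\{3,5,7\}$ here) a direct local computation, using that the core prime of $N$ is $q\ne 2$, shows that $N_2$ is primitively $2$-universal as well. Granting that $N$ has class number one, the primitive representation of $\ell'$ by $N$ therefore reduces to the single local condition that $\ell'_q$ be primitively represented by $N_q$; this is the only place where the choice of $s$, $t$ matters.

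The heart of the argument is the analysis at $q$. Since $\q N\cong\q(M^q)$ (this is exactly the embedding $\psi(L^q)\cap N=M^q$ from the proof of Lemma~\ref{lem:357super}), the core prime of $N$ is again $q$, and one computes $N_q\cong\langle\epsilon\rangle\mathbin{\perp}qK$ for a unit $\epsilon$ and a unimodular quaternary $\z_q$-lattice $K$. Now $d$ is a unit modulo $q$, while by Lemma~\ref{lem:5excep} the localization $\ell_q$ is either $q$-modular in its leading part or of scale contained in $q^2\z_q$. Subtracting the rank-one term $d\,ww^t$ with $w$ a $q$-adic unit in a suitable direction therefore produces a unimodular entry in $\z_q^\times$ in $\ell'_q$; the resulting $\ell'_q$ acquires a unimodular rank-one orthogonal summand that can be sent primitively into $\langle\epsilon\rangle$, while its complementary $q$-modular part embeds primitively into the rank-four $q$-modular block $qK$. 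The one point requiring care is matching the unit class of that summand with $\epsilon$, which is arranged by varying the residue of $w$ modulo $q$.

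The main obstacle is to reconcile this $q$-adic requirement on $(s,t)$ with positive-definiteness of $\ell'$. Positive-definiteness is equivalent to $qa>ds^2$ together with
\[
 q\,\det\ell>d\,(cs^2-2bst+at^2)\text{,}
\]
so it confines $(s,t)$ to a region whose size grows with $\det\ell$. The task is to locate inside this region a vector in the prescribed residue class modulo $q$ (or a small power of $q$); I would show this is always possible once $c$ exceeds an explicit bound depending only on the type, and dispatch the finitely many small values of $c$ by a direct check, exactly in the spirit of the proof of Theorem~\ref{thm:pf42}. Finally, the count of $110$ is obtained by summing the admissible ranges of $k$ over the six types in Table~\ref{tblr:201}: $8$ for E\textsuperscript{ii}, $17$ each for G\textsuperscript{ii} and G\textsuperscript{iii}, $22$ for K\textsuperscript{ii}, and $23$ each for K\textsuperscript{iii} and K\textsuperscript{iv}, giving $8+17+17+22+23+23=110$.
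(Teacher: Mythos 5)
Your overall route is exactly the paper's: reduce to the exceptional local shapes at the core prime $q$ via Lemma~\ref{lem:5excep}, pass to $\ell'\cong q\ell - d\,ww^t$ via Lemma~\ref{lem:357super} with the same auxiliary lattices $N$, use that $N$ has class number one and is primitively $2$-universal at every prime $p\ne q$, and finish with a positivity bound plus a finite check of small $c$; your count $8+17+17+22+23+23=110$ also matches Table~\ref{tblr:201}. However, there is one genuine gap at the crucial $q$-adic step.

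You claim that the unit class of the unimodular summand of $\ell'_q$ ``is arranged by varying the residue of $w$ modulo $q$''. That mechanism fails. In every exceptional case of Lemma~\ref{lem:5excep} one has $\fs(\ell_q)\subseteq q\z_q$, hence $\fs(q\ell_q)\subseteq q^2\z_q$, so modulo $q^2$ the form $\ell'_q$ is congruent to the rank-one form $-d\,ww^t$: the unimodular Jordan component of $\ell'_q$ has rank one and determinant in the \emph{fixed} square class $-d(\z_q^\times)^2$. Varying $(s,t)$ multiplies that entry by squares only, so the class cannot be tuned. Moreover the matching is genuinely necessary: since $N_q\cong\langle\epsilon\rangle\mathbin{\perp}qK$, any unit represented by $N_q$ is congruent mod $q$ to $\epsilon$ times a square, so $N_q$ represents no unit outside $\epsilon(\z_q^\times)^2$, and no choice of $s,t$ can repair a mismatch. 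What rescues the theorem --- and what the paper's proof implicitly verifies --- is the arithmetic coincidence that $-dL$ lies in the right class in all six types: $d$ is constant modulo $q$ on each type, and $-d\equiv 2\Mod3$ for E\textsuperscript{ii} ($d=3k-2$), $-d\equiv 2$, $3\Mod5$ for G\textsuperscript{ii}, G\textsuperscript{iii} ($d=5k-2$, $5k-3$), and $-d\equiv 3$, $6$, $5\Mod7$ for K\textsuperscript{ii}, K\textsuperscript{iii}, K\textsuperscript{iv} ($d=7k-3$, $7k-6$, $7k-5$), all of which are nonsquare units, i.e.\ lie in $\epsilon(\z_q^\times)^2=\Delta_q(\z_q^\times)^2$. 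Once this is checked, no residue-class/region argument over $(s,t)$ is needed: the paper simply takes $(s,t)=(0,1)$, so that
\[
 \ell'\cong\begin{pmatrix}qa & qb\\qb & qc-d\end{pmatrix}\text{,}
\]
which is positive definite as soon as $c$ exceeds an explicit bound ($c\ge 33$ for type K), with $c\le 32$ handled by direct computation. Add the square-class verification and your sketch closes. A smaller caveat: ``$q$ is the only core prime of $N$'' does not formally yield primitive $2$-universality of $N_2$ --- compare $I_5$, which is unimodular at $2$ yet fails there (Table~\ref{tblr:5excep}) --- so this is a separate direct computation, as you in effect concede.
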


\begin{proof}
Let $\ell\cong \left(\begin{smallmatrix}a&b\\b&c\end{smallmatrix}\right)$ ($0 \le 2b\le a\le c$) be a $\z$-lattice which is not primitively represented by $M$. Since all the other cases can be proved in similar manners, we only provide the proofs of the cases when $L$ is of type K. Note that 
\[
 \det(\text{K\textsuperscript{ii}})=7k-3\text{,}\quad \det(\text{K\textsuperscript{iii}})=7k-6\text{,}\quad \text{and}\quad \det(\text{K\textsuperscript{iv}})=7k-5\text{.}
\]
Assume that $L$ is of type K. By Lemma~\ref{lem:5excep}, we may assume that
\[
 \ell_7\cong\ang{7, 49\alpha} \quad \text{or} \quad \fs (\ell_7)\subseteq 49\z_7\text{.}
\]
Observe that $N = 7I_2\mathbin{\perp}\left(\begin{smallmatrix}3&1&1\\1&5&-2\\1&-2&5\end{smallmatrix}\right)$ is of class number one, and that $7$ is the only core prime of $N$. Furthermore, $N_7\cong \mathbb{H}^7\mathbin{\perp} O$, where $O = \ang{\Delta_7, 7, 7}$. Thus $N_7$ primitively represents all binary $\z_7$-lattices of the form $\ang{7\alpha, \theta}$, where $\theta\in Q^\ast(O) = \{\Delta_7, 7, 7\cdot\Delta_7\}(\z_7^\times)^2$. Hence, $N_7$ primitively represents
\[
 \ell' \cong \left(\begin{smallmatrix}7a & 7b\\7b & 7c - (7k-3)\end{smallmatrix}\right)\text{,}\quad \left(\begin{smallmatrix}7a & 7b\\7b & 7c - (7k-6)\end{smallmatrix}\right)\text{,}\quad \text{and}\quad \left(\begin{smallmatrix}7a & 7b\\7b & 7c - (7k-5)\end{smallmatrix}\right)\text{.}
\]
Therefore $N$ primitively represents $\ell'$ if it is positive definite. Hence, by Lemma \ref{lem:357super}, $\ell$ is primitively represented by $L$ if $c \ge 33$. One may directly check that $\ell$ is primitively represented by $L$ if $c\le 32$.
\end{proof}

\begin{rmk}
In fact, we do not use the fact that $M$ is the $5$-section of $L$ in the above theorem. 
If $L\cong$ \textup{D$^{\mbox{\scriptsize ii}}_3$}, \textup{H$^{\mbox{\scriptsize iv}}_3$}, or \textup{I$^{\mbox{\scriptsize iii}}_3$}, then we may take a primitive sublattice $M$ of $L$ as in the following table. Then $q$ is the only core prime of $M$, and one may apply Lemma~\ref{lem:357super} for each pair of $L$ and $N$ in the table. Therefore, the proofs of primitive $2$-universalities of these three candidates are quite similar to Theorem~\ref{thm:pf43}.
\end{rmk}

\begin{table}[ht]
\caption{The core prime of $M$ and the choice of $N$}
\label{tblr:rmk}
\renewcommand{\arraystretch}{1.2}
\begin{tabular}{c|c|c|c}
 \hline
 $L$ & $M$ & $q$ & $N$\\\hline
 D$^{\mbox{\scriptsize ii}}_3$ & $I_3\mathbin{\perp}\left(\begin{smallmatrix}2&1\\1&3\end{smallmatrix}\right)$ & $5$ & $5I_3\mathbin{\perp}\left(\begin{smallmatrix}2&1\\1&3\end{smallmatrix}\right)$\\\hline
 H$^{\mbox{\scriptsize iv}}_3$ or I$^{\mbox{\scriptsize iii}}_3$ & $I_2\mathbin{\perp}\left(\begin{smallmatrix}2&1&1\\1&2&1\\1&1&3\end{smallmatrix}\right)$\rule[-9pt]{0pt}{24pt} & $7$ & $7I_2\mathbin{\perp}\left(\begin{smallmatrix}3&1&1\\1&5&-2\\1&-2&5\end{smallmatrix}\right)$\\\hline
\end{tabular}
\end{table}

Summing up all, we have proved the primitive $2$-universalities of $175$ $\z$-lattices among $201$ candidates, and the primitive $2$-universalities of $26$ candidates remain unproven.

\section{The proof of primitive $2$-universality (exceptional cases)}
\label{sec:pf2}

Let $L$ be one of the remaining $26$ candidates of primitively $2$-universal $\z$-lattices which we do not consider in Section~\ref{sec:pf1}. We try to find quinary or quaternary primitive sublattices of $L$ which have class number one or two to prove primitive $2$-universality of $L$, in each exceptional case. Since the key ingredients of the proofs, which we will explain more precisely later, are quite similar to each other, we only provide the proofs of some representative cases. For the complete proofs, one may see the second author's dissertation \cite{Yo}. At the end of this section, we provide some essential data needed for computations.

\subsection{Case 4: Type H lattices}
\label{ssec:5h}

In this subsection, we prove the primitive $2$-universalities of the remaining type H lattices. The main obstacle for type H lattices is that the $5$-section $I_2\mathbin{\perp}\mathbb{A}\mathbin{\perp}\ang{2}$ of $L$ is of class number two, and the genus mate $I_4\mathbin{\perp}\ang{6}$ is not represented by $L$. The following lemma gives some information on binary $\z$-lattices that are primitively represented by the $5$-section of $L$, though it has class number two.

\begin{lem}\label{lem:superd2h}
Let $\ell \cong \left(\begin{smallmatrix}a&b\\b&c\end{smallmatrix}\right)$ be a binary $\z$-lattice and let $m$ and $k\ge 2$ be positive integers. Suppose that
\[
 \begin{pmatrix}2a-(2k-1)s^2 & 2b-(2k-1)st\\2b-(2k-1)st & 2c-(2k-1)t^2\end{pmatrix}
\]
is positive definite and is primitively represented by $2I_m\mathbin{\perp}\ang{4,1}$ for some integers $s$ and $t$. Then the binary $\z$-lattice $\ell$ is primitively represented by $I_m\mathbin{\perp}\ang{2}\mathbin{\perp}\left(\begin{smallmatrix}2&1\\1&k\end{smallmatrix}\right)$.
\end{lem}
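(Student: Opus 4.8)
The statement we want is Lemma~\ref{lem:superd2h}, which is a ``super-lattice'' style reduction exactly in the spirit of Lemma~\ref{lem:357super}. The target lattice is $L' \cong I_m \mathbin{\perp} \ang{2} \mathbin{\perp} \left(\begin{smallmatrix}2&1\\1&k\end{smallmatrix}\right)$, whose discriminant is $d = dL' = 2k-1$ (the determinant of the binary block $\left(\begin{smallmatrix}2&1\\1&k\end{smallmatrix}\right)$ is $2k-1$, and $I_m \mathbin{\perp}\ang 2$ contributes the factor $2$, but the relevant scalar appearing in the scaled construction is the odd part $2k-1$). The auxiliary lattice is $N \cong 2I_m \mathbin{\perp} \ang{4,1}$, and the hypothesis says $N$ primitively represents the scaled-and-shifted form $\ell' \cong \left(\begin{smallmatrix}2a-(2k-1)s^2 & 2b-(2k-1)st\\ 2b-(2k-1)st & 2c-(2k-1)t^2\end{smallmatrix}\right)$ for some $s,t$.

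**The plan.**

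The plan is to exhibit a representation $\psi\colon (L')^2 \to N \mathbin{\perp}\ang{2k-1}$ whose image meets $N$ in the scaled $(m{+}2)$-section, exactly paralleling the opening sentence of the proof of Lemma~\ref{lem:357super}. Concretely, I would first check by direct computation that scaling $L'$ by $2$ produces a lattice that embeds into $N\mathbin{\perp}\ang{2k-1}$: writing the Gram matrix of $2L'$ and comparing against $\diag(2I_m,4,1,2k-1)$, one finds explicit integral coordinates for each basis vector $e_1,\dots,e_{m+2}$ of $L'$. The only genuine content is handling the binary block $\left(\begin{smallmatrix}2&1\\1&k\end{smallmatrix}\right)$ scaled by $2$, i.e.\ $\left(\begin{smallmatrix}4&2\\2&2k\end{smallmatrix}\right)$; this should embed into $\ang{4,1}\mathbin{\perp}\ang{2k-1}$ via the vectors $(2,0,1)$ and $(1,1,1)$ or a similar small combination, since $4 = 4+0+0$ will not match, so instead one uses $\ang 2 \mathbin{\perp}\left(\begin{smallmatrix}2&1\\1&k\end{smallmatrix}\right)$ scaled by $2$ embedding into $\ang{4,1}\mathbin{\perp}\ang{2k-1}$ as a whole. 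I would set up the $3\times 3$ coordinate matrix so that $\psi$ sends the scaled $(m{+}2)$-section exactly onto $2I_m \mathbin{\perp}\ang{4,1}=N$.

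**Transferring primitivity.**

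Given the hypothesis, there is a primitive $2\times(m{+}2)$ coordinate matrix presenting $\ell'$ inside $N$. The key step — and the main obstacle — is to ``lift'' this primitive representation of $\ell'$ in $N$ back to a primitive representation of $\ell$ in $L'$. As in Lemma~\ref{lem:357super}, this requires a congruence/divisibility argument modulo the relevant prime (here $p=2$, coming from the scaling factor): one uses the factorization of the binary block modulo $2$ to adjust the sign of $(s,t)$ so that the extra coordinates $s,t$ can be absorbed into the lattice via an integral change of basis, producing an $(m{+}2)$-dimensional coordinate matrix for a sublattice $O\subseteq L'$ isometric to $\ell$. I expect the hard part to be verifying that the resulting $2\times(m{+}2)$ coordinate matrix is still primitive: following the template, one shows that an auxiliary $2\times(m{+}2)$ minor built from the lifted coordinates has the same gcd of maximal minors as the original primitive matrix, so primitivity is preserved, and hence $O$ is a primitive sublattice of $L'$ isometric to $\ell$. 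This completes the reduction, and the positive-definiteness of $\ell'$ guarantees that the whole construction stays in the positive definite range.
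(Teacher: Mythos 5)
Your plan coincides with the paper's own treatment: the paper gives no separate argument for Lemma~\ref{lem:superd2h}, stating only that the proof is ``quite similar to that of Lemma~\ref{lem:357super}'', and your sketch follows exactly that template (a representation $\psi\colon (L')^2 \to N \mathbin{\perp} \ang{2k-1}$ with $N = 2I_m\mathbin{\perp}\ang{4,1}$ and $L' = I_m\mathbin{\perp}\ang{2}\mathbin{\perp}\left(\begin{smallmatrix}2&1\\1&k\end{smallmatrix}\right)$, lifting the given primitive representation of $\ell'$, then a gcd-of-minors check). However, three local slips should be repaired before the sketch becomes a proof. First, your trial vectors fail: with respect to $\ang{4,1,2k-1}$ one has $Q(2,0,1)=2k+15\neq 4$ and $Q(1,1,1)=2k+4\neq 4$; the correct embedding of the scaled block $\left(\begin{smallmatrix}4&2\\2&2k\end{smallmatrix}\right)$ is $e\mapsto 2u$, $f\mapsto u+v$, where $u$ spans the $\ang{1}$ and $v$ spans the $\ang{2k-1}$ (your remark that ``$4=4+0+0$ will not match'' is also off: the diagonal matches via $(1,0,0)$, but the cross term $B=2$ is then unattainable, which is why $e\mapsto 2u$ is forced). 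Second, $\psi$ cannot send the scaled $(m{+}2)$-section ``exactly onto $N$'': the scaled section is $2I_m\mathbin{\perp}\ang{4,4}$, whose discriminant differs from that of $N$ by a factor of $4$; what holds, and what the template actually needs (your earlier sentence states it correctly), is $\psi\bigl((L')^2\bigr)\cap N = \bigl(I_m\mathbin{\perp}\ang{2,2}\bigr)^2$, a proper sublattice of $N$. Third, at $p=2$ there is no sign of $(s,t)$ to adjust ($-1\equiv 1$); instead, since $N\equiv\ang{0,\dots,0,1}\pmod 2$ has rank one modulo $2$, comparing $Q$ modulo $2$ in the hypothesis yields the parities $c_{m+2}\equiv s$ and $d_{m+2}\equiv t\pmod 2$ automatically, where $(c_i)$, $(d_i)$ are the coordinates of the primitive representation of $\ell'$ in $N$. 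One then sets, in the basis $\epsilon_1,\dots,\epsilon_{m+1},e,f$ of $L'$,
\[
 x=\sum_{i=1}^{m+1} c_i\epsilon_i + \tfrac{c_{m+2}-s}{2}\,e + s f\text{,}\qquad y=\sum_{i=1}^{m+1} d_i\epsilon_i + \tfrac{d_{m+2}-t}{2}\,e + t f\text{,}
\]
verifies $\z x+\z y\cong\ell$ by the Gram identity (e.g.\ $Q(x) = \frac{2a-(2k-1)s^2-c_{m+2}^2}{2}+\frac{c_{m+2}^2+(2k-1)s^2}{2}=a$), and primitivity of the new coefficient matrix is immediate because its $(m{+}2)$-th and last columns integrally generate the old $(m{+}2)$-th column via $c_{m+2}=2\cdot\frac{c_{m+2}-s}{2}+s$, so the column span still equals $\z^2$. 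With these corrections your argument is complete and is the intended one.
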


\begin{proof}
Since the proof is quite similar to that of Lemma~\ref{lem:357super}, it is left to the readers.
\end{proof}

\begin{lem}\label{lem:h}
If a binary $\z$-lattice $\ell$ is not primitively represented by the $\z$-lattice $M = I_2\mathbin{\perp}\ang{2}\mathbin{\perp}\mathbb{A}$, then either $\ell_2\cong \ang{6, 16\alpha}$ or $\fn (\ell_2)\subseteq 8\z_2$.
\end{lem}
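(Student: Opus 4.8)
The plan is to prove the contrapositive: if $\ell_2 \not\cong \ang{6, 16\alpha}$ and $\fn(\ell_2) \not\subseteq 8\z_2$, then $\ell$ is primitively represented by $M = I_2 \mathbin{\perp} \ang 2 \mathbin{\perp} \mathbb{A}$. First I would locate the core prime of $M$. Since $dM = 6$, I write $\q M \cong U \mathbin{\perp} \ang 6$ with $U \cong \ang{1,1,2,2}$ a quaternary space of discriminant $1$, and compute $S_p U = (2,2)_p = (2,-1)_p$. As $(2,-1)_2 = 1 \ne (-1,-1)_2$ while $(2,-1)_p = 1 = (-1,-1)_p$ for odd $p$, the localization $U_2$ is the unique anisotropic one, so $q = 2$ is the only core prime. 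Consequently, exactly as in the proof of Lemma~\ref{lem:5excep}, every binary $\z_p$-lattice with $p \ne 2$ is primitively represented by $M_p$, and the entire obstruction is $2$-adic.

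The main difficulty is that $M$ has class number two, with genus mate $I_4 \mathbin{\perp} \ang 6$, so the local--global principle alone yields primitive representation only by the genus, not by $M$ itself. To pin down $M$, I would invoke Lemma~\ref{lem:superd2h} with $m = 2$ and $k = 2$, for which $M = I_2 \mathbin{\perp} \ang 2 \mathbin{\perp} \left(\begin{smallmatrix}2&1\\1&2\end{smallmatrix}\right)$ is precisely the target lattice and $2k - 1 = 3$. That lemma reduces the problem to exhibiting integers $s, t$ for which
\[
 \ell' \cong \begin{pmatrix} 2a - 3s^2 & 2b - 3st\\ 2b - 3st & 2c - 3t^2\end{pmatrix} = 2\ell - 3\begin{pmatrix} s^2 & st\\ st & t^2\end{pmatrix}
\]
is positive definite and primitively represented by $N_0 = 2I_2 \mathbin{\perp} \ang{4, 1}$. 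The crucial gain is that a representation furnished this way lands inside $M$, sidestepping the genus mate entirely.

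The bulk of the work is then the $2$-adic analysis of $N_0 \cong \ang{1,2,2,4}$. I would first check that $N_0$ has class number one, so that primitive representation by $N_0$ is governed purely by local conditions, and that $N_0$ primitively represents the relevant binary lattices at every odd prime (its space being isotropic there). It then remains to show that, whenever $\ell_2$ avoids the two exceptional shapes, the parameters $s, t$ can be prescribed modulo a suitable power of $2$ (and modulo the finitely many other relevant primes, with $|s|,|t|$ kept small to preserve positive definiteness) so that $\ell'_2$ is primitively represented by $(N_0)_2 = \ang{1,2,2,4}$. The point is that passing from $\ell_2$ to $2\ell_2 - 3\left(\begin{smallmatrix}s^2&st\\st&t^2\end{smallmatrix}\right)$ rescales by $2$ and subtracts a rank-one form, which is exactly the freedom needed to move $\ell'_2$ into the primitively represented set of $(N_0)_2$ once the unimodular and $2\z_2$-modular Jordan constituents of $\ell_2$ are sufficiently generic.

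The heart of the proof, and the main obstacle, is this case division over the Jordan type of $\ell_2$ (unit scale, scale $2\z_2$, or scale contained in $4\z_2$), verifying in each non-exceptional case that an admissible $(s,t)$ exists while keeping $\ell'$ positive definite; as in Theorem~\ref{thm:pf42}, the finitely many small lattices $\ell$ where no room remains can be checked directly. The two families $\ell_2 \cong \ang{6, 16\alpha}$ and $\fn(\ell_2) \subseteq 8\z_2$ are precisely the configurations for which the obstructed $2\z_2$-modular constituent $\ang{6}\cong\ang{2\cdot 3}$ (with nonsquare unit $3$) or the uniformly high norm prevents any valid choice of $(s,t)$, which is what forces the stated necessary condition.
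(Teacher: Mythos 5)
Your skeleton coincides with the paper's actual proof: the paper also argues contrapositively, observes that the obstruction is purely $2$-adic, and invokes Lemma~\ref{lem:superd2h} with $m=k=2$, reducing everything to choices making $2\ell - 3ww^t$ positive definite and primitively represented by the class-number-one lattice $N\cong 2I_2\mathbin{\perp}\ang{4,1}\cong\ang{1,2,2,4}$ (the paper parametrizes by a shear $u$ and a single corner subtraction, i.e.\ its lattices $\ell'(u,t)$, $\ell''(u,t)$ are exactly $2\ell-3ww^t$ for $w=(t,-ut)$ or $w=(-ut,t)$, so your $(s,t)$-family contains the paper's). The odd-prime bookkeeping you propose is also what the paper does, via $N_p\cong\mathbb{H}\mathbin{\perp}\mathbb{H}$ for odd $p$.

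The genuine gap is in your endgame. First, the entire mathematical content of the lemma is the Jordan-type case division over $\ell_2$ (the paper runs through roughly six cases with numerous sub-cases, each requiring a specific congruence-guided choice of $(u,t)$ against the exceptional list of $\ang{1,2,2,4}$ in Table~\ref{tblr:4excep}), and your sketch merely asserts that admissible $(s,t)$ exist. Second, and more concretely, your claim that ``the finitely many small lattices $\ell$ where no room remains can be checked directly'' is false as stated: the residual set is not finite. For $\ell\cong\left(\begin{smallmatrix}2&1\\1&c\end{smallmatrix}\right)$ with $c\equiv 0\Mod4$ (an infinite family with $\ell_2\cong\mathbb{H}$, hence not exceptional), positive definiteness of $2\ell-3ww^t$ confines $w$ to a tiny search region, and the natural choices fail $2$-adically --- e.g.\ for $c\equiv 0\Mod{16}$, $w=(1,0)$ yields $\ell'\cong\left(\begin{smallmatrix}1&2\\2&2c\end{smallmatrix}\right)$ with $\q_2\ell'\cong\q_2\mathbb{H}$, and $w=(0,2)$ yields $\ell'\cong\mathbb{A}^2$, both in the excluded list for $\ang{1,2,2,4}$. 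The paper explicitly postpones $a=2$ and closes this family \emph{outside} the Lemma~\ref{lem:superd2h} framework, by representing $c-2$ (if $c\not\equiv 0\Mod{16}$) or $c-14$ (if $c\equiv 0\Mod{16}$) by the ternary section $\ang{1,1,2}$ --- the dichotomy reflecting that lattice's exceptional set $4^k(16m+14)$ --- and exhibiting explicit primitive sublattices $\z[e_4, v+e_5]$, resp.\ $\z[e_4, w-2e_4+3e_5]$, of $M$ isometric to $\left(\begin{smallmatrix}2&1\\1&c\end{smallmatrix}\right)$. (A nonobvious choice such as $w=(1,-1)$ can in fact rescue parts of this family within your parametrization, but your proposal neither identifies the problem nor supplies such choices.) A smaller inaccuracy: your closing sentence claims the two stated families are \emph{precisely} the obstructed configurations, whereas the lemma asserts only necessity, and the paper's remark following Lemma~\ref{lem:5excep} explicitly leaves sufficiency open.
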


\begin{proof}
Fix a basis for $M$ corresponding to the Gram matrix in the statement of the lemma. Let $\ell\cong \left(\begin{smallmatrix}a&b\\b&c\end{smallmatrix}\right)$ ($0\le 2b\le a\le c$) be a binary $\z$-lattice which does not satisfy the conclusion. Note that $M$ primitively represents $\ang{1, 1, 2, 2}$, which has class number one, and $\ang{1, 1, 2, 2}\cong \mathbb{H}\mathbin{\perp}\mathbb{H}$ is primitively $2$-universal over $\z_p$ for any odd prime $p$.
Hence, we may assume that
\[
 \ell_2\cong\begin{cases}
  \ang{1, -1}\text{, }\ang{\epsilon, 4\delta}\text{, }\ang{\epsilon, 16\alpha}\text{, }\\
  \ang{2, -2}\text{, }\ang{2\epsilon, 8\alpha}\text{, }\\
  \ang{4\epsilon, 4\delta}\text{ with }\epsilon\delta\equiv -1\Mod4\text{, }\ang{4\epsilon, 16\alpha}\text{,}\\
  \mathbb{H}^2\text{,}\\
  \mathbb{H}\text{ or }\mathbb{A}\text{.}
 \end{cases}
\]
We define the binary $\z$-lattices
\[
 \ell'(u,t) \cong \left(\begin{smallmatrix}2a-3t^2 & 2ua + 2b\\2ua + 2b & 2u^2a + 4ub + 2c\end{smallmatrix}\right) \quad\text{and}\quad \ell''(u,t) \cong \left(\begin{smallmatrix}2a + 4ub + 2u^2c & 2b + 2uc\\2b + 2uc & 2c-3t^2\end{smallmatrix}\right)\text{.}
\]
Note that
\[
 \ell \cong \left(\begin{smallmatrix}a & ua + b\\ua + b & u^2a + 2ub + c\end{smallmatrix}\right) \cong \left(\begin{smallmatrix}a + 2ub + u^2c & b + uc\\b + uc & c\end{smallmatrix}\right)
\]
for any integer $u$. Hence, if $\ell'(u,t)$ or $\ell''(u,t)$ is primitively represented by $N\cong 2I_2\mathbin{\perp}\ang{4, 1}$ for some integers $u$ and $t$, then $\ell$ is primitively represented by $M$ by Lemma~\ref{lem:superd2h}. Since $N_p\cong \mathbb{H}\mathbin{\perp}\mathbb{H}$ for any odd prime $p$, $N_p$ is primitively $2$-universal over $\z_p$ for any odd prime $p$. Note that
\[
 \fs (\ell'(u,1)_2) = \fs (\ell''(u,1)_2) = \z_2\text{.}
\]

First, assume that $\fs(\ell_2) = \z_2$. Assume that $a$ is odd. Since $d\ell''(0,1)\equiv 2\Mod4$, $\ell''(0,1)_2$ is primitively represented by $N_2$. Note that $\ell''(0,1)$ is positive definite. Hence, $\ell''(0,1)$ is primitively represented by $N$. Now, assume that $a$ is even. Then $a\equiv 0\Mod4$ and $c$ is odd. In this case, $d\ell'(0,1)\equiv 2\Mod4$ and $\ell'(0,1)$ is positive definite. Hence, $\ell'(0,1)$ is primitively represented by $N$.

Now, assume that $\ell_2\cong\langle 2, -2\rangle$. Note that $a\equiv b\equiv c\equiv 0\Mod2$. First, suppose that $a\equiv 2\Mod4$. If $a \not\equiv -2\Mod{16}$, then
\[
 d\ell''(0,1)\equiv 4\Mod8 \quad \text{and} \quad d\ell''(0,1)\not\equiv -4\Mod{32}\text{.}
\]
Hence, $\ell''(0,1)$ is primitively represented by $N$. If $a\equiv -2\Mod{16}$, then
\[
 \fs (\ell''(0,2)_2) = 4\z_2 \quad \text{and} \quad d\ell''(0,2)\equiv 32\Mod{64}\text{.}
\]
Hence, $\ell''(0,2)$ is primitively represented by $N$. Now, suppose that $a\equiv 0\Mod4$. Then $a\equiv 0\Mod{16}$ and $c\equiv 2\Mod4$. If $c \not\equiv -2\Mod{16}$, then $\ell'(0,1)$ is primitively represented by $N$, and if $c\equiv -2\Mod{16}$, then $\ell'(0,2)$ is primitively represented by $N$.

Next, assume that $\ell_2\cong\langle 2\epsilon, 8\alpha\rangle$. Note that $a\equiv b\equiv c\equiv 0\Mod2$. If $a\equiv 2\Mod4$ and $a \not\equiv 6\Mod{16}$, then
\[
 d\ell''(0,1)\equiv 4\Mod8 \quad \text{and} \quad d\ell''(0,1)\not\equiv -4\Mod{32}\text{.}
\]
Hence, $\ell''(0,1)$ is primitively represented by $N$. Similarly, if $c\equiv 2\Mod4$ and $c \not\equiv 6\Mod{16}$, then $\ell'(0,1)$ is primitively represented by $N$. Now, assume that $a\equiv 6\Mod{16}$ or $c\equiv 6\Mod{16}$. Since we are assuming that
\[
 \ell_2\not\cong\langle 6, 16\alpha\rangle\text{,}
\]
we have $d\ell\equiv 16\Mod{32}$. Assume that $a\equiv c\equiv 6\Mod{16}$. Then, $b\equiv 2\Mod4$. Hence, there is an $\eta\in\{1, -1\}$ such that
\[
 a + 2\eta b + c \equiv 6\Mod8\text{.}
\]
Since $d\ell''(\eta, 1)\equiv 8\Mod{16}$, we have
\[
 \ell''(\eta, 1)_2\cong\langle\epsilon, 8\delta\rangle\text{,}
\]
which is primitively represented by $N_2 \cong \langle 1, 2, 2, 4 \rangle$. Furthermore, since $\ell''(\eta, 1)$ is positive definite, it is primitively represented by $N$. Next, assume that $a\equiv 6\Mod{16}$ and $c\not\equiv 6\Mod{16}$. Then either $c\equiv 8\Mod{16}$ and $b\equiv 0\Mod8$, or $c\equiv 0\Mod{16}$ and $b\equiv 4\Mod8$. In any case,
\[
 a - 2b + c\equiv -2\Mod{16}\text{.}
\]
Since $d\ell''(-1,1)\equiv 12\Mod{32}$, we have
\[
 \ell''(-1, 1)_2\cong\langle-3, -4\rangle\text{,}
\]
which is primitively represented by $N_2 \cong \langle 1, 2, 2, 4 \rangle$. Furthermore, since $\ell''(-1, 1)$ is positive definite, it is primitively represented by $N$. Finally, assume that $a\not\equiv 6\Mod{16}$ and $c\equiv 6\Mod{16}$. Then, similarly to the above, $\ell'(-1, 1)$ is primitively represented by $N$ in this case.

Now, assume that $\fs(\ell_2) = 4\z_2$. Note that $a\equiv b\equiv c\equiv 0\Mod4$. If $a\equiv 4\Mod8$, then $d\ell''(0,1)\equiv 8\Mod{16}$. Since $\ell''(0,1)$ is positive definite, $\ell''(0,1)$ is primitively represented by $N$. Similarly, if $c\equiv 4\Mod8$, then $\ell'(0,1)$ is primitively represented by $N$ in this case.

Next, assume that $\ell_2 \cong \mathbb{H}^2$. Note that $a\equiv b-2\equiv c\equiv 0\Mod4$. Assume that $a\equiv 4\Mod8$. Since $d\ell''(0,1)\equiv 8\Mod{16}$, we have
\[
 \ell''(0, 1)_2\cong\langle\epsilon, 8\delta\rangle\text{,}
\]
which is primitively represented by $N_2 \cong \langle 1, 2, 2, 4 \rangle$. Since $\ell''(0,1)$ is positive definite, $\ell''(0,1)$ is primitively represented by $N$. Next, assume that $c\equiv 4\Mod8$. Then, similarly to the above, $\ell'(0,1)$ is primitively represented by $N$ in this case. Finally, assume that $a\equiv c\equiv 0\Mod8$. Since
\[
 a - 2b + c\equiv 4\Mod8\text{,}
\]
we have $d\ell''(-1, 1)\equiv 8\Mod{16}$. Since $\ell''(-1,1)$ is positive definite, $\ell''(-1,1)$ is primitively represented by $N$.

Finally, assume that $\ell_2 \cong \mathbb{H}$ or $\mathbb{A}$. Note that $a\equiv b-1\equiv c\equiv 0\Mod2$. Assume that $a\equiv 6\Mod8$. Since $d\ell''(0,1)\equiv 8\Mod{16}$, we have
\[
 \ell''(0, 1)_2\cong\langle\epsilon, 8\delta\rangle\text{,}
\]
which is primitively represented by $N_2 \cong \langle 1, 2, 2, 4 \rangle$. Hence, $\ell''(0,1)$ is primitively represented by $N$. Assume that $c\equiv 6\Mod8$. Then, similarly to the above, $\ell'(0,1)$ is primitively represented by $N$ in this case. Now, suppose that neither $a$ nor $c$ is congruent to $6$ modulo $8$. Then we have
\[
 a \equiv 2\Mod8 \quad \text{or} \quad a \equiv 0\Mod 4\text{,}
\]
and the same with $c$. First, assume that $a\equiv c\equiv 2\Mod8$. Then, there is an $\eta\in\{1, -1\}$ such that
\[
 a + 2\eta b + c \equiv 6\Mod8\text{.}
\]
Since $d\ell''(\eta, 1)\equiv 8\Mod{16}$, $\ell''(\eta, 1)$ is primitively represented by $N_2 \cong \langle 1, 2, 2, 4 \rangle$. Hence, $\ell''(\eta, 1)$ is primitively represented by $N$ if it is positive definite, that is, if $a\ge 7$, or if $a = 2$ and $c\ge 15$. Clearly, $\left(\begin{smallmatrix}2&1\\1&2\end{smallmatrix}\right)$ and $\left(\begin{smallmatrix}2&1\\1&10\end{smallmatrix}\right)$ are primitively represented by $M$. Next, assume that $a\equiv 0\Mod4$ and $c\equiv 2\Mod8$. Then
\[
 4a - 4b + c\equiv 6\Mod8\text{.}
\]
Since $d\ell'(-2, 1)\equiv 8\Mod{16}$, we have $\ell'(-2, 1)_2 \cong\langle\epsilon, 8\delta\rangle$, which is primitively represented by $N_2$. Since
\[
 \ell'(-2, 1) \cong \left(\begin{smallmatrix}2a-3 & -4a + 2b\\-4a + 2b & 8a - 8b + 2c\end{smallmatrix}\right)
\]
is positive definite, it is primitively represented by $N$. Now, assume that $a\equiv 2\Mod8$ and $c\equiv 0\Mod4$. Then, similarly to the above, $\ell''(-2, 1)$ is primitively represented by $N$ if it is positive definite, that is, if $a\ge 11$, or if $a = 10$ and $c\ge 4$. The case when $a = 2$ will be postponed to the end of this proof. Finally, assume that $a\equiv c\equiv 0\Mod4$. Then, there is an $\eta\in\{1, -1\}$ such that
\[
 a + 2\eta b + c \equiv 6\Mod8\text{.}
\]
Hence, $\ell''(\eta, 1)$ is primitively represented by $N$ if it is positive definite, that is, if $a\ge 7$, or if $a = 4$ and $c\ge 5$. Clearly $\left(\begin{smallmatrix}4&1\\1&4\end{smallmatrix}\right)$ is primitively represented by $M$.

Now, suppose that $a=2$, $b=1$, and $c\equiv 0\Mod4$. If $c\not\equiv 0\Mod{16}$, then $\ang{1,1,2}$ represents $c-2$. If we choose a vector $v$ in the $3$-section of $M$ such that $Q(v) = c-2$, then clearly, $\z[e_4, v + e_5]$ is a primitive sublattice of $M$ isometric to $\left(\begin{smallmatrix}2&1\\1&c\end{smallmatrix}\right)$. If $c\equiv 0\Mod{16}$, then $\ang{1,1,2}$ primitively represents $c-14$. If we choose a vector $w$ in the $3$-section of $M$ such that $Q(v) = c-14$, then clearly,
\[
 \z[e_4, w - 2e_4 + 3e_5]
\]
is a primitive sublattice of $M$ isometric to $\left(\begin{smallmatrix}2&1\\1&c\end{smallmatrix}\right)$.
\end{proof}

For type H\textsuperscript{i} lattices, the $5$-section splits $L$. Hence, one may prove the primitive $2$-universalities of type H\textsuperscript{i} lattices in the similar way to Theorem~\ref{thm:pf42} by using the above lemma. Note that for the proofs of primitive $2$-universalities of type H\textsuperscript{ii}, H\textsuperscript{iii}, and H\textsuperscript{iv} lattices, we need Lemma~\ref{lem:superhn} given below as well as Lemma~\ref{lem:h}. In particular, the proof of the primitive $2$-universality of the lattice H$^{\mbox{\scriptsize iii}}_6$ will be provided in Subsection~\ref{ssec:55} since we need Lemma~\ref{lem:notog} additionally. The proof of the primitive $2$-universality of the lattice H$^{\mbox{\scriptsize ii}}_5$ will be provided in Theorem~\ref{thm:pfh25}.

\begin{lem}\label{lem:superhn}
Let $\ell \cong \left(\begin{smallmatrix}a&b\\b&c\end{smallmatrix}\right)$ be a binary $\z$-lattice and let $q$ and $r$ be positive integers.
\begin{enumerate}
\item Suppose that $\ang{1,1,2,2}$ primitively represents $\left(\begin{smallmatrix}2a-A & 2b-B\\2b-B & 2c-C\end{smallmatrix}\right)$, where
\[
 \begin{pmatrix}A&B\\B&C\end{pmatrix} = \begin{pmatrix}s_1 & s_2\\t_1 & t_2\end{pmatrix} \begin{pmatrix}2q-1&0\\0&2r-1\end{pmatrix} \begin{pmatrix}s_1 & t_1\\s_2 & t_2\end{pmatrix}
\]
for some integers $s_1$, $s_2$, $t_1$, and $t_2$ such that at least one of $A$ and $C$ is odd. Then $\ell$ is primitively represented by $I_2\mathbin{\perp}\left(\begin{smallmatrix}2&1\\1&q\end{smallmatrix}\right)\mathbin{\perp}\left(\begin{smallmatrix}2&1\\1&r\end{smallmatrix}\right)$.
\item Suppose that $\ang{1,2,2,4}$ primitively represents $\left(\begin{smallmatrix}2a-A & 2b-B\\2b-B & 2c-C\end{smallmatrix}\right)$, where
\[
 \begin{pmatrix}A&B\\B&C\end{pmatrix} = \begin{pmatrix}s_1 & s_2\\t_1 & t_2\end{pmatrix} \begin{pmatrix}2q-1&1\\1&2r-1\end{pmatrix} \begin{pmatrix}s_1 & t_1\\s_2 & t_2\end{pmatrix}
\]
for some integers $s_1$, $s_2$, $t_1$, and $t_2$. Then $\ell$ is primitively represented by $I_2\mathbin{\perp}\ang{2}\mathbin{\perp}\left(\begin{smallmatrix}2&1&1\\1&q&1\\1&1&r\end{smallmatrix}\right)$.
\item Suppose that $\ang{1,1,2,2}$ primitively represents $\left(\begin{smallmatrix}2a-A & 2b-B\\2b-B & 2c-C\end{smallmatrix}\right)$, where
\[
 \begin{pmatrix}A&B\\B&C\end{pmatrix} = \begin{pmatrix}s_1 & s_2\\t_1 & t_2\end{pmatrix} \begin{pmatrix}2q-1&1\\1&2r-2\end{pmatrix} \begin{pmatrix}s_1 & t_1\\s_2 & t_2\end{pmatrix}
\]
for some integers $s_1$, $s_2$, $t_1$, and $t_2$ such that at least one of $A$ and $C$ is odd. Then $\ell$ is primitively represented by $I_2\mathbin{\perp}\left(\begin{smallmatrix}2&0&0&1\\0&2&1&1\\0&1&q&1\\1&1&1&r\end{smallmatrix}\right)$.
\end{enumerate}
\end{lem}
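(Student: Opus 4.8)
The plan is to prove all three parts by the scaling-and-pullback device already used in Lemma~\ref{lem:357super} and deferred in Lemma~\ref{lem:superd2h}, now with scaling factor $2$ and a rank-two complementary form. For each part I would fix an isometric embedding of the doubled target lattice $L^2$ into (auxiliary quaternary lattice)\,$\mathbin{\perp}$\,(complementary binary form), transport a primitive representation of the modified lattice $\ell'$ by the auxiliary lattice across this embedding, and read the result back as a primitive binary sublattice of $L$ isometric to $\ell$. I will write out part~(1); parts~(2) and~(3) are identical in structure, differing only in the explicit embedding and in the complementary form $\left(\begin{smallmatrix}2q-1&1\\1&2r-1\end{smallmatrix}\right)$, resp. $\left(\begin{smallmatrix}2q-1&1\\1&2r-2\end{smallmatrix}\right)$.

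For part~(1) put $L=I_2\mathbin{\perp}\left(\begin{smallmatrix}2&1\\1&q\end{smallmatrix}\right)\mathbin{\perp}\left(\begin{smallmatrix}2&1\\1&r\end{smallmatrix}\right)=\z e_1+\dots+\z e_6$ with $Q(e_1)=Q(e_2)=1$, the block on $e_3,e_4$ and the block on $e_5,e_6$, and set $N=\ang{1,1,2,2}=\z g_1+\dots+\z g_4$ and $C=\ang{2q-1,2r-1}=\z h_1+\z h_2$. First I would record the embedding
\[
 \psi(e_1)=g_3,\quad \psi(e_2)=g_4,\quad \psi(e_3)=2g_1,\quad \psi(e_4)=g_1+h_1,\quad \psi(e_5)=2g_2,\quad \psi(e_6)=g_2+h_2.
\]
A direct Gram-matrix check shows that $\psi\colon L^2\to N\mathbin{\perp}C$ is an isometry onto its image and that $\psi(L^2)\cap N$ equals the scaling by $2$ of the sublattice $\z e_1+\z e_2+\z e_3+\z e_5\cong\ang{1,1,2,2}$, the exact analogue of $\psi(L^q)\cap N=M^q$. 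Given a primitive representation $X_N=\sum_{i=1}^4 c_ig_i$, $Y_N=\sum_{i=1}^4 d_ig_i$ of $\ell'$ by $N$, set $X=X_N+s_1h_1+s_2h_2$ and $Y=Y_N+t_1h_1+t_2h_2$; since $\ell'+S^tDS=2\ell$ with $S=\left(\begin{smallmatrix}s_1&t_1\\s_2&t_2\end{smallmatrix}\right)$ and $D=\diag(2q-1,2r-1)$, the pair $X,Y$ has Gram matrix $2\ell$.

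The hard part is forcing $X,Y\in\psi(L^2)$. Reading off the generators of $\psi(L^2)$, an element $\sum\mu_ig_i+\nu_1h_1+\nu_2h_2$ lies in $\psi(L^2)$ if and only if $\mu_1\equiv\nu_1$ and $\mu_2\equiv\nu_2\pmod2$, so membership amounts to $c_1\equiv s_1$, $c_2\equiv s_2$, $d_1\equiv t_1$, $d_2\equiv t_2\pmod2$. The Gram relations defining $\ell'$ only give
\[
 c_1+c_2\equiv s_1+s_2,\qquad d_1+d_2\equiv t_1+t_2,\qquad c_1d_1+c_2d_2\equiv s_1t_1+s_2t_2\pmod2,
\]
which do not determine $(c_1,c_2,d_1,d_2)\bmod2$ on the nose. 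However, interchanging $g_1\leftrightarrow g_2$ is an isometry of $N$, so I may replace $(c_1,c_2,d_1,d_2)$ by its simultaneous transposition $(c_2,c_1,d_2,d_1)$. A short enumeration over $\f_2$ shows that the three displayed invariants determine the orbit of $(c_1,c_2,d_1,d_2)$ under this transposition uniquely, \emph{except} when $s_1+s_2\equiv t_1+t_2\equiv0$, which is precisely the excluded case $A\equiv C\equiv0\pmod2$. Hence, exactly because at least one of $A,C$ is odd, after possibly applying the transposition all four congruences hold and $X,Y\in\psi(L^2)$.

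Writing $x=\psi^{-1}(X)$ and $y=\psi^{-1}(Y)$ then gives $x,y\in L$ with Gram matrix $\ell$; explicitly $x=c_3e_1+c_4e_2+\tfrac{c_1-s_1}{2}e_3+s_1e_4+\tfrac{c_2-s_2}{2}e_5+s_2e_6$ and similarly for $y$, the halved entries being integral by the congruences. For primitivity I would use the minor argument from Lemma~\ref{lem:357super}: each of the six $2\times2$ minors $c_id_j-c_jd_i$ of the primitive matrix representing $\ell'$ is a $\z$-linear combination of the $2\times2$ minors of the coefficient matrix of $x,y$ in $e_1,\dots,e_6$, so the gcd of the latter divides $1$ and $x,y$ span a primitive sublattice of $L$. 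Parts~(2) and~(3) run identically with $N=\ang{1,2,2,4}$, resp. $\ang{1,1,2,2}$, and their respective complementary forms; in part~(2) the auxiliary lattice carries a single unimodular vector, so the mod-$2$ bookkeeping collapses to one coordinate and no parity hypothesis is needed, whereas part~(3) again requires one of $A,C$ odd for the same $\f_2$ reason. The only genuinely delicate point is this $\f_2$ orbit analysis underlying the membership step; the embedding and the primitivity check are routine once the correct $\psi$ is in hand.
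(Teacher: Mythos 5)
Your proposal is correct and takes essentially the same route as the paper: the paper's own ``proof'' of this lemma is a one-line deferral to the method of Lemma~\ref{lem:357super}, and your scaled embedding $\psi\colon L^2\to N\mathbin{\perp}C$, the mod-$2$ membership congruences (with the swap isometry $g_1\leftrightarrow g_2$ of $\langle 1,1,2,2\rangle$ playing the role of the sign adjustment $(s,t)\mapsto(-s,-t)$ used there), and the Cauchy--Binet minor argument for primitivity are precisely the scale-$2$ analogues of that proof. In particular, your $\f_2$ orbit enumeration correctly isolates the only failure case $s_1+s_2\equiv t_1+t_2\equiv 0\pmod 2$, which is exactly the situation $A\equiv C\equiv 0\pmod 2$ excluded by the parity hypothesis in parts (1) and (3), and your observation that in part (2) the single unimodular coordinate of $\langle 1,2,2,4\rangle$ makes the membership congruence automatic matches the absence of that hypothesis in the statement.
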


\begin{proof}
Since the proof is quite similar to that of Lemma~\ref{lem:357super}, it is left to the readers.
\end{proof}

\begin{thm}\label{thm:pfh25}
The $\z$-lattice \textup{H$^{\mbox{\scriptsize ii}}_5$} $\cong I_2\mathbin{\perp}\mathbb{A}\mathbin{\perp}\left(\begin{smallmatrix}2&1\\1&5\end{smallmatrix}\right)$ is primitively $2$-universal.
\end{thm}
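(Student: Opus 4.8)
The plan is to reproduce, for this non-splitting $5$-section case, the strategy used for the type H\textsuperscript{i} lattices, combining Lemma~\ref{lem:h} with part (1) of Lemma~\ref{lem:superhn}. Write $L = I_2\perp\mathbb{A}\perp\left(\begin{smallmatrix}2&1\\1&5\end{smallmatrix}\right)$ with basis $e_1,\dots,e_6$, and let $M = I_2\perp\mathbb{A}\perp\ang{2} = \z e_1+\cdots+\z e_5$ be its $5$-section, a primitive sublattice of $L$. Let $\ell\cong\left(\begin{smallmatrix}a&b\\b&c\end{smallmatrix}\right)$ with $0\le 2b\le a\le c$ be an arbitrary binary $\z$-lattice. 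Since any lattice primitively represented by $M$ is primitively represented by $L$, Lemma~\ref{lem:h} lets me assume from the start that $\ell_2\cong\ang{6,16\alpha}$ or $\fn(\ell_2)\subseteq 8\z_2$.

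Next I would invoke Lemma~\ref{lem:superhn}(1) with $q=2$ and $r=5$: its target lattice $I_2\perp\left(\begin{smallmatrix}2&1\\1&2\end{smallmatrix}\right)\perp\left(\begin{smallmatrix}2&1\\1&5\end{smallmatrix}\right)$ is exactly $L$, and the auxiliary diagonal form is $\ang{2q-1,2r-1}=\ang{3,9}$. Thus it suffices to find integers $s_1,s_2,t_1,t_2$ for which
\[
 \ell^\ast\cong\begin{pmatrix}2a-A & 2b-B\\ 2b-B & 2c-C\end{pmatrix},\qquad \begin{pmatrix}A&B\\B&C\end{pmatrix}=\begin{pmatrix}s_1&s_2\\t_1&t_2\end{pmatrix}\begin{pmatrix}3&0\\0&9\end{pmatrix}\begin{pmatrix}s_1&t_1\\s_2&t_2\end{pmatrix}
\]
is positive definite, has at least one odd diagonal entry, and is primitively represented by $\ang{1,1,2,2}$. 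Because $\ang{1,1,2,2}$ has class number one and $\ang{1,1,2,2}\cong\mathbb{H}\perp\mathbb{H}$ is primitively $2$-universal over $\z_p$ for every odd prime $p$, once $\ell^\ast$ is positive definite this final condition reduces to the single $2$-adic requirement that $\ell^\ast_2$ be primitively represented by $\ang{1,1,2,2}_2$.

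The heart of the argument is then a $2$-adic case analysis driven by the two shapes of $\ell_2$ allowed by Lemma~\ref{lem:h}. I would try the simplest parameters first, namely $(s_2,t_2)=(0,0)$ together with $(s_1,t_1)\in\{(1,0),(0,1)\}$, so that $\left(\begin{smallmatrix}A&B\\B&C\end{smallmatrix}\right)$ subtracts $3$ from a single diagonal corner of $2\ell$; this automatically makes that corner of $\ell^\ast$ odd. For instance, when $\fn(\ell_2)\subseteq 8\z_2$ one has $8\mid a$, $8\mid c$ and $4\mid b$, and the choice $(s_1,t_1)=(1,0)$ yields $\ell^\ast\cong\left(\begin{smallmatrix}2a-3 & 2b\\ 2b & 2c\end{smallmatrix}\right)$, where $2a-3\equiv 5\Mod 8$ splits off a unimodular unit whose orthogonal complement lies in $16\z_2$; one then checks directly from the Jordan splitting that $\ell^\ast_2$ is primitively represented by $\ang{1,1,2,2}_2$. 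The case $\ell_2\cong\ang{6,16\alpha}$ is handled symmetrically, subtracting from the $c$-corner, and the genuinely even subcases (for example $a\equiv c\equiv 0\Mod{16}$) may require activating the freedom in $(s_2,t_2)$ to exploit the square $9$ in $\ang{3,9}$ before the desired $2$-adic shape is reached.

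Finally, these parameter choices only force $\ell^\ast$ to be positive definite once $c$ exceeds a small explicit bound, so the finitely many remaining lattices with small $c$ must be disposed of by hand, exhibiting an explicit primitive sublattice of $L$ isometric to $\ell$ as in the closing paragraphs of the proof of Lemma~\ref{lem:h}. I expect the main difficulty to be exactly this $2$-adic bookkeeping: arranging one choice of $(s_1,s_2,t_1,t_2)$ that simultaneously keeps $\ell^\ast$ positive definite, keeps a diagonal entry odd, and lands $\ell^\ast_2$ among the binary $\z_2$-lattices primitively represented by $\ang{1,1,2,2}_2$, with the most even lattices and the borderline small-$c$ lattices demanding the most care.
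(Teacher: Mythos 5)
Your skeleton is the same as the paper's: reduce to the two local shapes of Lemma~\ref{lem:h}, then apply Lemma~\ref{lem:superhn}(1) with $(q,r)=(2,5)$ and use that $\ang{1,1,2,2}$ has class number one and is $\mathbb{H}\mathbin{\perp}\mathbb{H}$ at odd primes. But your central $2$-adic computation is false, and not by a repairable slip: in the main case $\fn(\ell_2)\subseteq 8\z_2$, your choice $(s_1,s_2,t_1,t_2)=(1,0,0,0)$, i.e. $(A,B,C)=(3,0,0)$, gives $\ell^\ast\cong\left(\begin{smallmatrix}2a-3&2b\\2b&2c\end{smallmatrix}\right)$ with $2c\equiv 0\Mod{16}$ and $4b^2\equiv 0\Mod{64}$, so that $\ell^\ast_2\cong\ang{\epsilon,16\alpha}$ with $\epsilon\equiv 5\Mod 8$ --- which is precisely one of the listed primitive exceptions of $\ang{1,1,2,2}$ in Table~\ref{tblr:4excep}. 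And it is a genuine exception, not merely a listed possibility: a mod-$16$ check of $y_1^2+y_2^2+2y_3^2+2y_4^2$ over the parities of $(y_1,\dots,y_4)$ shows that $\ang{1,1,2,2}_2$ contains \emph{no} primitive vector whose norm lies in $16\z_2$, so $\ang{\epsilon,16\alpha}$ is never primitively represented. This is exactly why the paper's proof subtracts from \emph{both} corners at once, taking $(A,B,C)=(3,0,9)$: then $\ell'\cong\left(\begin{smallmatrix}2a-3&2b\\2b&2c-9\end{smallmatrix}\right)$ is unimodular at $2$ with $d\ell'\equiv 27\equiv 3\Mod 8$, a class which $\ang{1,1,2,2}$ does primitively represent.

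The second gap is structural: you rely on Lemma~\ref{lem:superhn}(1) alone, whereas the paper also invokes Lemma~\ref{lem:superd2h} with target $N'\cong\ang{1,2,2,4}$, applied through the primitive quinary sublattice $I_2\mathbin{\perp}\ang{2}\mathbin{\perp}\left(\begin{smallmatrix}2&1\\1&5\end{smallmatrix}\right)$ of $L$, precisely for the mixed subcases of $\ell_2\cong\ang{6,16\alpha}$ (say $a\equiv 2$, $c\equiv 0\Mod 4$, forcing $a\equiv 6\Mod{16}$ and $b\equiv 0\Mod 4$). Your promise to ``activate the freedom in $(s_2,t_2)$'' there is a hope, not an argument, and the terrain is mined: writing $d\ell^\ast=4\,d\ell-2aC-2cA+4bB+27(s_1t_2-s_2t_1)^2$, one sees that \emph{every} choice with $A$, $C$ both odd and $B$ even yields $d\ell^\ast\equiv -4+3\equiv 7\Mod 8$, i.e. $\q_2\ell^\ast\cong\q_2\mathbb{H}$, again on the exception list. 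An escape within Lemma~\ref{lem:superhn}(1) does exist --- e.g. $(s_1,s_2,t_1,t_2)=(1,0,1,1)$, so $(A,B,C)=(3,3,12)$ and $d\ell^\ast\equiv 3\Mod 8$ with $\ell^\ast_2$ unimodular --- but it requires taking $C$ \emph{even}, the opposite of your guiding heuristic of keeping corners odd by single subtractions, and it needs its own positive-definiteness estimate; none of this is in your proposal. (A minor point in your favor: the small-$c$ hand check you anticipate never arises in the paper's proof of this theorem, since the congruences $d\ell\equiv 0\Mod{32}$, respectively $a\equiv c\equiv 0\Mod 8$, already force the auxiliary lattices to be positive definite.) As written, then, the flagship case of your argument fails on a genuine local exception and the mixed subcases are unaddressed; the framework is right, but the $2$-adic bookkeeping you correctly identified as the crux is exactly where the proof breaks.
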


\begin{proof}
Let $\ell\cong \left(\begin{smallmatrix}a&b\\b&c\end{smallmatrix}\right)$ ($0\le 2b\le a\le c$) be a $\z$-lattice which is not primitively represented by the $5$-section of $L$. Then, by Lemma~\ref{lem:h}, we may assume that $\ell$ satisfies either
\begin{itemize}
\item[(i)] $a$ or $c\equiv 6\Mod{16}$ and $d\equiv 0\Mod{32}$, or
\item[(ii)] $a\equiv c\equiv 0\Mod8$ and $b\equiv 0\Mod4$.
\end{itemize}
According to Lemma \ref{lem:superhn}, if a $\z$-lattice
\[
 \ell' \cong \begin{pmatrix}2a-3&2b\\2b&2c-(2q-1)\end{pmatrix}\text{,}
\]
is primitively represented by $N\cong \ang{1,1,2,2}$, then $\ell$ is primitively represented by $L$. Furthermore, according to Lemma~\ref{lem:superd2h}, if
\[
 \ell'' \cong \begin{pmatrix}2a-(2q-1)&2b\\2b&2c\end{pmatrix}\quad\text{or}\quad \ell''' \cong \begin{pmatrix}2a&2b\\2b&2c-(2q-1)\end{pmatrix}
\]
is primitively represented by $N'\cong \ang{1,2,2,4}$, then $\ell$ is primitively represented by $I_2\mathbin{\perp}\ang{2}\mathbin{\perp}\left(\begin{smallmatrix}2&1\\1&5\end{smallmatrix}\right)$, and hence by $L$.

Assume that case (i) holds. Note that $a\equiv b\equiv c\equiv 0\Mod2$ in this case. If $a\equiv c\equiv 2\Mod4$, then $d\ell'\equiv 3\Mod8$. Thus $\ell'_2$ is primitively represented by $N_2$ in this case. Since $\ell'$ is positive definite, it is primitively represented by $N$. If $a-2\equiv c\equiv 0\Mod4$, then $\fs (\ell'''_2) = \z_2$ and $d\ell'''\equiv 20\Mod{32}$. Thus, one may easily show that $\ell'''_2$ is primitively represented by $N'_2$ in this case. Since $\ell'''$ is positive definite, it is primitively represented by $N'$. Similarly, if $a\equiv c-2\equiv 0\Mod4$, then $\ell''$ is primitively represented by $N'$.

Now, assume that case (ii) holds. Clearly, we have $d\ell'\equiv 3\Mod8$. Since $\ell'$ is positive definite, it is primitively represented by $N$.
\end{proof}

\subsection{Case 5: $L$ has a quaternary primitive sublattice of class number one or two}
\label{ssec:54}

In this subsection, we prove the primitive $2$-universalities of the lattices
\[
 \text{B$^{\mbox{\scriptsize ii}}_5$, B$^{\mbox{\scriptsize ii}}_6$, D$^{\mbox{\scriptsize iii}}_5$, I$^{\mbox{\scriptsize ii}}_4$, I$^{\mbox{\scriptsize ii}}_5$, I$^{\mbox{\scriptsize iii}}_4$, and J$_3$.}
\]
Note that the $5$-section of $L$ has class number one and does not split $L$ orthogonally. In these cases, we make use of some information on the primitive binary exceptions of quaternary primitive sublattices. The method of the proof depends heavily on whether the discriminant of such a quaternary sublattice which we are considering is a perfect square or not.

First, assume $L\cong$ B$^{\mbox{\scriptsize ii}}_5$ or B$^{\mbox{\scriptsize ii}}_6$. The $4$-section $I_4$ is a quaternary orthogonal summand of class number one whose discriminant is a perfect square. Hence, the primitive $2$-universalities could be proved in a similar manner to that of Theorem~\ref{thm:pf42}.

Recall that a finite sequence of vectors $v_1, \dots, v_m$ in $\z^n$ ($m\le n$) is primitive if and only if the greatest common divisor $g$ of the determinants of all $m\times m$ submatrices of the coefficient matrix of $v_1, \dots, v_m$, which is defined by the $m\times n$ matrix whose rows are $v_1, \dots, v_m$, is $\pm 1$. Also, we say that $v_1, \dots, v_m$ is $p$-primitive for a prime $p$ if $g$ is prime to $p$. Then it is clear that $v_1, \dots, v_m$ is primitive if and only if it is $p$-primitive for any prime $p$.

\begin{lem}\label{lem:notog}
Let $L = \z e_1 + \cdots + \z e_{n+1}$ be a free $\z$-module of rank $n+1$, and let $M = \z e_1 + \cdots + \z e_n$. Suppose that $v_1, \dots, v_m$ are vectors in $M$ for some $1 \le m \le n$.
\begin{enumerate}
\item Suppose that $v_1, \dots, v_m$ is $p$-primitive for some prime $p$. Then $v_1, \dots,\allowbreak v_{m-1}, v_m+pw$ is also $p$-primitive for any $w\in M$.
\item Suppose that $v_1, \dots, v_m$ is $p$-primitive for some odd prime $p$. Then for any $w\in M$, either $v_1, \dots, v_{m-1}, v_m+w$ or $v_1, \dots, v_{m-1}, v_m-w$ is also $p$-primitive.
\item Suppose that $v_1, \dots, v_m$ is primitive. For a vector $y = y_1 e_1 + \cdots + y_n e_n + y_{n+1} e_{n+1}\in L$, put
\begin{align*}
 \mathcal{P}(y) & = \{p : p\text{ is a prime that divides }\gcd(y_1, \dots, y_n, y_{n+1})\}\text{,}\\
 \mathcal{P}(y_{n+1}) & = \{p : p\text{ is a prime that divides }y_{n+1}\}\text{.}
\end{align*}
If $\mathcal{P}(y_{n+1})\setminus\mathcal{P}(y) = \varnothing$, then $v_1, \dots, v_{m-1}, v_m+y$ is primitive. If $\mathcal{P}(y_{n+1})\setminus\mathcal{P}(y) = \{p\}$ for some odd prime $p$, then either $v_1, \dots, v_{m-1},\allowbreak v_m+y$ or $v_1, \dots, v_{m-1}, v_m-y$ is primitive.
\end{enumerate}
\end{lem}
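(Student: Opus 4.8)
The plan is to translate primitivity into a rank condition modulo each prime, and then assemble (3) from (1) and (2) one prime at a time. Writing a sequence's coefficient matrix as $A$, the sequence is $p$-primitive exactly when some $m\times m$ minor of $A$ is a unit mod $p$, equivalently when the reductions $\overline{v}_1,\dots,\overline{v}_m$ are linearly independent over $\f_p$. Since a sequence is primitive if and only if it is $p$-primitive for every $p$, parts (1) and (2) become assertions in linear algebra over $\f_p$, and (3) follows by a prime-by-prime bookkeeping of which signs preserve $p$-primitivity.

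For (1), I would simply note that $v_m$ and $v_m+pw$ have the same reduction mod $p$, so $\overline{v}_1,\dots,\overline{v}_{m-1},\overline{v}_m+p\overline{w}=\overline{v}_1,\dots,\overline{v}_{m-1},\overline{v}_m$ and the rank mod $p$ is unchanged. For (2), I would argue by contradiction: if neither $v_1,\dots,v_{m-1},v_m+w$ nor $v_1,\dots,v_{m-1},v_m-w$ is $p$-primitive, then since $\overline{v}_1,\dots,\overline{v}_{m-1}$ are independent (a subsequence of the $p$-primitive $v_1,\dots,v_m$), both $\overline{v}_m+\overline{w}$ and $\overline{v}_m-\overline{w}$ lie in $\operatorname{span}_{\f_p}\{\overline{v}_1,\dots,\overline{v}_{m-1}\}$. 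Adding them places $2\overline{v}_m$ in that span, and since $p$ is odd, $2$ is invertible, so $\overline{v}_m$ lies in the span, contradicting the independence of $\overline{v}_1,\dots,\overline{v}_m$. The oddness of $p$ enters precisely at this inversion of $2$.

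For (3), I would first observe that $v_1,\dots,v_m$ remains primitive as a sequence in $L$: the coefficient matrix in $L$ is $A$ with an appended zero column (the $e_{n+1}$-coordinates of the $v_i\in M$ all vanish), and its $m\times m$ minors are either minors of $A$ or zero, so the gcd is still $\pm1$. Next I would fix an arbitrary prime $p$ and decide, for each sign, whether $v_1,\dots,v_{m-1},v_m\pm y$ is $p$-primitive. If $p\nmid y_{n+1}$, then in any $\f_p$-relation the last coordinate forces the coefficient of $\overline{v}_m\pm\overline{y}$ to vanish ($y_{n+1}$ being a unit), after which independence of $\overline{v}_1,\dots,\overline{v}_{m-1}$ kills the rest, so \emph{both} signs are $p$-primitive. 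If $p\in\mathcal{P}(y)$, then $y\equiv\0\pmod p$, whence $v_m\pm y\equiv v_m$ and again both signs work. The only primes that can distinguish the signs lie in $\mathcal{P}(y_{n+1})\setminus\mathcal{P}(y)$: for such a $p$ one has $y_{n+1}\equiv0$ but $\overline{y}\ne\0$, so mod $p$ the vector $\overline{y}$ lies in the $M$-part and we are exactly in the situation of (2), which (for odd $p$) makes precisely one of the two signs $p$-primitive.

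Finally I would assemble a global sign. If $\mathcal{P}(y_{n+1})\setminus\mathcal{P}(y)=\varnothing$, no prime constrains the sign, so $v_1,\dots,v_{m-1},v_m+y$ is $p$-primitive for every $p$ and hence primitive. If $\mathcal{P}(y_{n+1})\setminus\mathcal{P}(y)=\{p\}$ with $p$ odd, then every prime other than $p$ accepts either sign by the first two cases, while at $p$ I choose the sign supplied by (2); this one choice makes $v_1,\dots,v_{m-1},v_m\pm y$ $p'$-primitive for all $p'$, hence primitive. I expect this assembly to be the main obstacle: the content is not any single local computation but the fact that a \emph{single} global sign can be made to work at every prime at once. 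It succeeds only because the troublesome set is a lone odd prime while all other primes are sign-insensitive, and the oddness is exactly what makes the dichotomy of (2) available at that one prime.
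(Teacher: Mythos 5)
Your proposal is correct and follows essentially the same route as the paper's proof: part (1) by reduction modulo $p$, part (2) by the two-signs trick in which the sum forces $2\overline{v}_m$ into the span and oddness of $p$ inverts the $2$ (the paper phrases this via multilinearity of the $m\times m$ minors, you via $\mathbb{F}_p$-linear independence, which are equivalent formulations of $p$-primitivity), and part (3) assembled prime by prime using (1) at the primes of $\mathcal{P}(y)$ and (2) at the lone odd prime, the only packaging difference being that the paper handles the primes $p\nmid y_{n+1}$ at once through the observation that the gcd $g_1$ of the minors involving the last column equals $|y_{n+1}|$, whereas you check those primes directly by noting that the unit last coordinate kills the coefficient of $\overline{v}_m\pm\overline{y}$ in any relation. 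One small inaccuracy that does not affect the argument: at a prime in $\mathcal{P}(y_{n+1})\setminus\mathcal{P}(y)$, part (2) guarantees \emph{at least} one working sign rather than ``precisely one'' as you state, but your assembly only uses existence of a working sign, so nothing breaks.
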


\begin{proof}
(1) The lemma follows from the fact that the determinant of any $m\times m$ submatrix of the $m\times n$ coefficient matrix of $v_1, \dots, v_m$ is congruent to the determinant of the corresponding $m\times m$ submatrix of the $m\times n$ coefficient matrix of $v_1, \dots, v_{m-1}, v_m+pw$ modulo $p$.

\noindent (2) Suppose on the contrary that both
\[
 v_1, \dots, v_{m-1}, v_m+w \quad\text{and}\quad v_1, \dots, v_{m-1}, v_m-w
\]
are not $p$-primitive. This implies that the determinant of any $m\times m$ submatrix of the $m\times n$ coefficient matrices $C(\eta)$ of $v_1, \dots, v_{m-1}, v_m + \eta w$ is a multiple of $p$ for any $\eta\in\{1, -1\}$. Observe that, by multilinearity of the determinant, the determinant of any $m\times m$ submatrix of $C(\eta)$ is equal to
\begin{multline*}
 \det(\text{the corresponding $m\times m$ submatrix of $C$})\\
 + \eta\cdot\det(\text{the corresponding $m\times m$ submatrix of $C'$})\text{,}
\end{multline*}
where $C$ is the $m\times n$ coefficient matrix of $v_1, \dots, v_m$, and $C'$ is that of $v_1, \dots, v_{m-1}, w$. Since $p$ is odd, if the determinants of any $m\times m$ submatrix of $C(1)$ and the corresponding $m\times m$ submatrix of $C(-1)$ are multiples of $p$ simultaneously, then so are the determinants of the corresponding $m\times m$ submatrices of $C$ and $C'$. This implies that $v_1, \dots, v_m$ is not $p$-primitive, which is a contradiction.

\noindent (3) We have to show that the greatest common divisor of the determinants of $m\times m$ submatrices of $m\times (n+1)$ coefficient matrix of $v_1, \dots, v_{m-1}, v_m + \eta y$ is $1$ for some $\eta\in\{1, -1\}$. Let $g_1$ be the greatest common divisor of the determinants of all $m\times m$ submatrices containing the $(n+1)$-th column, and $g_2$ be the greatest common divisor of the determinants of those not containing the column. Then it suffices to show that $(g_1, g_2) = 1$. Let
\[
 w = y_1 e_1 + \cdots + y_n e_n\in M\text{.}
\]
Then $g_2$ is equal to the greatest common divisor of the determinants of all $m\times m$ submatrices of $m\times n$ coefficient matrix of $v_1, \dots, v_{m-1}, v_m + \eta w$. Note that $g_1 = |y_{n+1}|$. Hence it suffices to show that $v_1, \dots, v_{m-1}, v_m + \eta w$ is $q$-primitive for any $q\in \mathcal{P}(y_{n+1})$. If $\mathcal{P}(y_{n+1})\setminus\mathcal{P}(y) = \varnothing$, it follows directly from (1). If $\mathcal{P}(y_{n+1})\setminus\mathcal{P}(y) = \{p\}$, it follows from (1) that both $v_1, \dots, v_{k-1}, v_k + w$ and $v_1, \dots, v_{k-1}, v_k - w$ are $q$-primitive for any $q\in\mathcal{P}(y_{n+1})$ such that $q\ne p$ (equivalently, for any $q\in \mathcal{P}(y)$), and it follows from (2) that at least one of the two is $p$-primitive.
\end{proof}

Now, assume that $L\cong$ J$_3$. Note that the quaternary sublattice $\langle 1, 2\rangle\mathbin{\perp}\left(\begin{smallmatrix}3&1\\1&3\end{smallmatrix}\right)$ of $L$ with a perfect square discriminant has class number one. Below we provide a complete proof of the primitive $2$-universality of J$_3$, which is one of the most complicated cases among all candidates.

\begin{thm}\label{thm:pfj}
\noindent\textup{(1)} Let $\ell\cong \left(\begin{smallmatrix}a&b\\b&c\end{smallmatrix}\right)$ be a binary $\z$-lattice such that $a\equiv 1\Mod2$ and $c\equiv 0\Mod{16}$. If $\left(\begin{smallmatrix}a&b\\b&c-6\end{smallmatrix}\right)$ is primitively represented by the $\z$-lattice $\ang{1, 2}\mathbin{\perp}\left(\begin{smallmatrix}3&1\\1&3\end{smallmatrix}\right)$, then $\ell$ is primitively represented by the $\z$-lattice $\ang{1}\mathbin{\perp}\mathbb{A}\mathbin{\perp}\left(\begin{smallmatrix}3&1\\1&3\end{smallmatrix}\right)$.

\noindent\textup{(2)} The $\z$-lattice $L\cong$ \textup{J$_3$} $\cong I_2\mathbin{\perp}\mathbb{A}\mathbin{\perp}\left(\begin{smallmatrix}3&1\\1&3\end{smallmatrix}\right)$ is primitively $2$-universal.
\end{thm}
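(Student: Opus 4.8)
The plan is to exploit the orthogonal splitting $\text{J}_3\cong\ang{1}\mathbin{\perp}L'$, where $L'\cong\ang{1}\mathbin{\perp}\mathbb{A}\mathbin{\perp}\left(\begin{smallmatrix}3&1\\1&3\end{smallmatrix}\right)$ is the quinary lattice appearing in part (1), together with two auxiliary lattices of class number one: the $5$-section $M\cong I_2\mathbin{\perp}\mathbb{A}\mathbin{\perp}\ang{3}$ of $\text{J}_3$ (for which $\q M\cong\ang{1,2,3,6}\mathbin{\perp}\ang{1}$, so that its unique core prime is $q=2$), and the quaternary lattice $N\cong\ang{1,2}\mathbin{\perp}\left(\begin{smallmatrix}3&1\\1&3\end{smallmatrix}\right)$, whose discriminant $16$ is a perfect square. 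Part (1) is a transfer statement in the spirit of Lemma~\ref{lem:357super}, converting a primitive representation by the quaternary $N$ into one by the quinary $L'$; part (2) then disposes of all but the primitive binary exceptions of $M$ using its class number one property, and handles those exceptions by part (1).

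For part (1), I would fix a basis $g_1,\dots,g_5$ of $L'$ realizing $\ang{1}\mathbin{\perp}\left(\begin{smallmatrix}2&1\\1&2\end{smallmatrix}\right)\mathbin{\perp}\left(\begin{smallmatrix}3&1\\1&3\end{smallmatrix}\right)$, so that $N=\z g_1+\z g_2+\z g_4+\z g_5$ is a primitive sublattice and $f:=g_2-2g_3$ satisfies $Q(f)=6$ and $f\perp N$; here $N\mathbin{\perp}\z f$ has index $2$ in $L'$, with $2g_3=g_2-f$. Given a primitive representation of $\left(\begin{smallmatrix}a&b\\b&c-6\end{smallmatrix}\right)$ by $N$ via vectors $u,v\in N$, set $\tilde v=v+f$; then $Q(u)=a$, $Q(\tilde v)=(c-6)+6=c$, $B(u,\tilde v)=b$, so $\z u+\z\tilde v\cong\ell$. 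The content is the primitivity of $\z u+\z\tilde v$ in $L'$. Writing $g_3$ as the ``new'' coordinate and applying Lemma~\ref{lem:notog}(3) to $y=f$, one finds $\mathcal P(y_{n+1})\setminus\mathcal P(y)=\{2\}$, so there is no obstruction at odd primes and only $p=2$ remains—precisely the case lying outside the scope of Lemma~\ref{lem:notog}(3). This is exactly where the hypotheses $a\equiv1\Mod2$ and $c\equiv0\Mod{16}$ enter: the first pins down the reduction of $u$ modulo $2$, and the second controls $Q(\tilde v)\Mod{16}$, together excluding the degenerate configuration in which $\tilde v$ becomes divisible by $2$ in $L'$. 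The remaining manipulations mirror the determinant computations in the proof of Lemma~\ref{lem:357super}.

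For part (2), let $\ell\cong\left(\begin{smallmatrix}a&b\\b&c\end{smallmatrix}\right)$ be Minkowski reduced, i.e. $0\le 2b\le a\le c$. Since $M$ has class number one, $\ell$ is primitively represented by $M$, hence by $\text{J}_3$, as soon as $\ell_2$ is primitively represented by $M_2$; so I would first determine, as in Lemma~\ref{lem:5excep}, the primitive binary exceptions of $M$ at its core prime $q=2$, which are of restricted $2$-adic shape. For each exceptional $\ell$ I would use $\text{J}_3\cong\ang{1}\mathbin{\perp}L'$: choosing $s,t\in\z$ and passing to $\ell_0\cong\left(\begin{smallmatrix}a-s^2&b-st\\b-st&c-t^2\end{smallmatrix}\right)$ (subtracting the rank-one form carried by the orthogonal $\ang{1}$), I would arrange $a-s^2\equiv1\Mod2$ and $c-t^2\equiv0\Mod{16}$ so that part (1) applies to $\ell_0$; a primitive representation of $\ell_0$ by $L'$ then lifts automatically to one of $\ell$ by $\text{J}_3$, since adjoining the $\ang{1}$-column cannot destroy primitivity. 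To invoke part (1) I must check that $\left(\begin{smallmatrix}a-s^2&b-st\\b-st&c-t^2-6\end{smallmatrix}\right)$ is primitively represented by $N$; as $N$ has class number one this is a purely local verification, routine at odd primes (where $N_p$ is unimodular of rank $4$) and requiring a short $2$-adic case analysis. The finitely many $\ell$ with $c$ below an explicit bound would be handled by direct computation, exactly as in Theorems~\ref{thm:pf42} and \ref{thm:pfh25}.

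The hard part will be the $2$-adic bookkeeping underlying both parts. In part (1) the obstruction is genuine: because $\mathbb{A}$ is even and its norm-$2$ vectors are $2$-adically indistinguishable from the complementary norm-$6$ vector $f$, the naive lift $\tilde v=v+f$ can fall into $2L'$, and only the precise congruences $a\equiv1\Mod2$, $c\equiv0\Mod{16}$—rather than weaker ones modulo $8$—exclude this; making the exclusion rigorous, possibly by replacing the given primitive representation of $\left(\begin{smallmatrix}a&b\\b&c-6\end{smallmatrix}\right)$ by another with better parity, is the crux. In part (2) the corresponding difficulty is to match the list of primitive binary exceptions of $M$ at $p=2$ against the congruence classes to which part (1) can be steered by the choice of $(s,t)$, all while keeping $\ell_0$ positive definite; verifying that \emph{every} exceptional class can indeed be reduced to the hypotheses of part (1) is where the argument is most delicate.
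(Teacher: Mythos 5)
Your outline coincides with the paper's own proof: the same splitting $\mathrm{J}_3\cong\ang{1}\mathbin{\perp}L'$, the same norm-$6$ glue vector $f=\pm(g_2-2g_3)$ orthogonal to $N\cong\ang{1,2}\mathbin{\perp}\left(\begin{smallmatrix}3&1\\1&3\end{smallmatrix}\right)$, the same lift $\tilde v=v+f$, and the same reduction of primitivity to the prime $2$ (odd primes being harmless because the new coordinate of $f$ is $\mp2$, as in Lemma~\ref{lem:notog}(3)). But in part (1) you stop exactly at the crux. Your claim that $c\equiv0\Mod{16}$ ``controls $Q(\tilde v)\Mod{16}$'' and thereby excludes $\tilde v\in2L'$ is not an argument: $Q(\tilde v)=c\equiv0\Mod{16}$ is perfectly consistent with $\tilde v=2z$, so no norm condition on $\tilde v$ alone can rule out divisibility. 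What is needed, and what the paper actually does, is a congruence analysis of the coordinates $(d_1,d_2,d_3,d_4)$ of $v$ in $N$: from $Q_N(v)=c-6\equiv10\Mod{16}$ one deduces that $d_1$ is even and $d_3,d_4$ are odd (in particular $v\not\equiv g_2\Mod{2N}$, which is exactly the degenerate configuration $\tilde v\in2L'$), while $a$ odd forces an odd entry among $c_1,c_3,c_4$; together these make $\left(\begin{smallmatrix}c_1&c_3&c_4\\d_1&d_3&d_4\end{smallmatrix}\right)$ $2$-primitive. This mod-$16$ computation is the entire content of (1), and your hedge about ``replacing the given primitive representation by another with better parity'' indicates you had not verified that the direct argument closes (it does; no replacement is needed).

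Part (2) has a more concrete failure: you funnel every exception through part (1) after subtracting $\left(\begin{smallmatrix}s^2&st\\st&t^2\end{smallmatrix}\right)$, which requires $c-t^2\equiv0\Mod{16}$ (or, transposing roles, $a-s^2\equiv0\Mod{16}$). Since squares are $\{0,1,4,9\}$ modulo $16$, this is unattainable whenever $a\equiv c\equiv8$ or $12\Mod{16}$ --- and such forms (for instance with $b\equiv2\Mod4$, so that $\fn(\ell_2)\subseteq4\z_2$) lie among the exceptional classes of the $5$-section with $c$ arbitrarily large, so no finite computation rescues them; note also that Lemma~\ref{lem:5excfj} gives only \emph{necessary} local conditions, so your method must cover every $\ell$ of this local shape. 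The paper closes these cases with three devices absent from your proposal: unimodular basis changes of $\ell$ (the parameter $u$ in $\ell'_u$, $\ell''_u$, used e.g.\ to arrange $a+2\eta b+c\equiv0$ or $4\Mod{16}$ with $\eta=\pm1$); subtraction of $6s'^2$ with $s'$ \emph{even}, where primitivity comes from Lemma~\ref{lem:notog}(1) rather than from part (1), so not every exception is steered into the hypotheses of (1) at all; and a separate argument for the infinite family $a=4$, $b\in\{0,2\}$ (where the congruence steering and positivity both break down), handled via the primitive $1$-universality of the $4$-section $I_2\mathbin{\perp}\mathbb{A}$ together with explicit primitive sublattices such as $\z[e_5-e_6,\,w+e_5]$. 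Without these mechanisms your part (2) is not merely incomplete in bookkeeping --- it provably cannot reach the stated hypotheses of part (1) for the $8,12\Mod{16}$ classes.
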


\begin{proof}
(1) We fix bases for $\z$-lattices $\ang{1, 2}\mathbin{\perp}\left(\begin{smallmatrix}3&1\\1&3\end{smallmatrix}\right)$ and $\ang{1}\mathbin{\perp}\mathbb{A}\mathbin{\perp}\left(\begin{smallmatrix}3&1\\1&3\end{smallmatrix}\right)$ corresponding to the given Gram matrices. With respect to such bases, if there exists a primitive sublattice $\z\left[\begin{smallmatrix}c_1 & c_2 & c_3 & c_4\\d_1 & d_2 & d_3 & d_4\end{smallmatrix}\right]\cong \left(\begin{smallmatrix}a&b\\b&c-6\end{smallmatrix}\right)$ of $\ang{1, 2}\mathbin{\perp}\left(\begin{smallmatrix}3&1\\1&3\end{smallmatrix}\right)$ for $c_i$, $d_i\in\z$, then clearly, the sublattice
\[
 \z\begin{bmatrix}c_1 & c_2 & 0 & c_3 & c_4\\d_1 & d_2-1 & 2 & d_3 & d_4\end{bmatrix}
\]
of $\ang{1}\mathbin{\perp}\mathbb{A}\mathbin{\perp}\left(\begin{smallmatrix}3&1\\1&3\end{smallmatrix}\right)$ is isometric to $\ell$. Since the greatest common divisor of the determinants of all $2\times 2$ submatrices containing the third column is $2$, it suffices to show that $\left(\begin{smallmatrix}c_1 & c_3 & c_4\\d_1 & d_3 & d_4\end{smallmatrix}\right)$ is $2$-primitive. Since $a$ is odd, exactly one of $c_1$, $c_3$, $c_4$ is odd or all of them are odd. Furthermore, since
\[
 d_1^2 + 2d_2^2 + 3d_3^2 + 2d_3 d_4 + 3d_4^2 \equiv 10\Mod{16}\text{,}
\]
$d_1$ is even and $d_3$, $d_4$ are odd. Therefore, $\left(\begin{smallmatrix}c_1 & c_3 & c_4\\d_1 & d_3 & d_4\end{smallmatrix}\right)$ is $2$-primitive.

\noindent (2) Denote by $M$ the $5$-section of $L$. Let $M' = \z[e_1, e_3, e_4, e_5, e_6]$ and $N = \z[e_1, e_3, e_5, e_6]$ be primitive sublattices of $L$. Note that
\[
 M' \cong \ang{1}\mathbin{\perp}\begin{pmatrix}2&1\\1&2\end{pmatrix}\mathbin{\perp}\begin{pmatrix}3&1\\1&3\end{pmatrix}\quad \text{and} \quad N \cong \ang{1,2}\mathbin{\perp}\begin{pmatrix}3&1\\1&3\end{pmatrix}\text{.}
\]
Let $\ell\cong \left(\begin{smallmatrix}a&b\\b&c\end{smallmatrix}\right)$ ($0\le 2b\le a\le c$) be a binary $\z$-lattice which is primitively represented by neither $M$ nor $N$. Then $\ell$ satisfies either
\begin{itemize}
\item[(i)] $a$ or $c\equiv 1\Mod8$ and $d\ell\equiv 0\Mod{16}$, or
\item[(ii)] $a\equiv c\equiv 0\Mod4$ and $b$ is even.
\end{itemize}
If $c\le 32$, then one may directly check that $\ell$ is primitively represented by $L$. Hence, we may assume that $c\ge 33$. Also, we assume that $a\ne 4$ for the moment. Consider the $\z$-lattices
\begin{align*}
 \ell' = \ell'_u(s,t;s') & \cong \begin{pmatrix}a + 2ub + u^2c - s^2 - 6s'^2 & b + uc - st\\b + uc - st & c - t^2\end{pmatrix}\text{,}\\
 \ell'' = \ell''_u(s,t;t') & \cong \begin{pmatrix}a - s^2 & ua + b - st\\ua + b - st & u^2a + 2ub + c - t^2 - 6t'^2\end{pmatrix}\text{,}
\end{align*}
where $u$, $s$, $t$, $s'$ and $t'$ are integers. Observe that the orthogonal complement of $N$ in $M'$ is $N^\perp = \z[- e_3 + 2e_4]\cong\ang{6}$. Hence, by Lemma~\ref{lem:notog}, if $\ell'_u(s, t; s')$ ($\ell''_u(s, t; t')$) is primitively represented by $N$ for some $s'$ ($t'$, respectively) even, then $\ell$ is primitively represented by $L$. Moreover, by (1) given above, if all of the following three conditions hold, then $\ell$ is primitively represented by $L$:
\begin{itemize}
\item[(a)] $\ell'_u(s, t; 1)$ ($\ell''_u(s, t; 1)$) is primitively represented by $N$;
\item[(b)] $a + 2ub + u^2c - s^2$ ($u^2a + 2ub + c - t^2$) $\equiv 0\Mod{16}$;
\item[(c)] $c - t^2$ ($a - s^2$, respectively) is odd.
\end{itemize}
Assume that case (i) holds. First, suppose that $a$ is odd. We consider the $\z$-lattice
\[
 \ell''_u(0,0;1)\cong\bigl(\begin{smallmatrix}a & ua + b\\ua + b & u^2a + 2ub + c - 6\end{smallmatrix}\bigr)\text{.}
\]
Since $d\ell''_u(0,0;1)\equiv 2\Mod4$, $\ell''_u(0,0;1)_2$ is primitively represented by $N_2\cong \ang{1,2}\mathbin{\perp}\left(\begin{smallmatrix}3&1\\1&3\end{smallmatrix}\right)$. Hence, for any integer $u$, $\ell''_u(0,0;1)$ is primitively represented by $N$. Since $a$ is odd, there is a $u\in\{-2, -1, 0, 1\}$ such that $ua + b\equiv 0\Mod4$, which implies that $u^2a + 2ub + c\equiv 0\Mod{16}$. Hence, we are done by (1). Now, suppose that $a$ is even. Then $a\equiv 0$, $4\Mod{16}$ and $c$ is odd. Hence, similarly to the above, if we take an integer $u\in\{-2, -1, 0, 1\}$ such that $u^2a + 2ub + c\equiv 0\Mod{16}$, then $\ell'_u(0,0;1)$ is primitively represented by $N$.

Now, assume that case (ii) holds. First, suppose that either $a\equiv 0$, $4\Mod{16}$ or $c\equiv 0$, $4\Mod{16}$. If we define a $\z$-lattice $\ell'''$ by
\[
 \ell''' = \begin{cases}
  \ell''_0(1,0;1) & \text{if }a\equiv 0\Mod{16}\text{,}\\
  \ell''_0(1,2;1) & \text{if }a\equiv 4\Mod{16}\text{,}\\
  \ell'_0(0,1;1) & \text{if }c\equiv 0\Mod{16}\text{,}\\
  \ell'_0(2,1;1) & \text{if }c\equiv 4\Mod{16}\text{,}
 \end{cases}
\]
then $d\ell'''\equiv 2\Mod4$. Hence, $\ell'''$ is primitively represented by $N$. Therefore, by (1), $\ell$ is primitively represented by $L$ in each case. Now, suppose that $a$, $c\equiv 8$ or $12\Mod{16}$. First, assume that $b\equiv 2\Mod4$. One may easily show that there is an $\eta\in\{1, -1\}$ such that
\[
 a + 2\eta b + c \equiv 0\text{ or }4\Mod{16}\text{.}
\]
Hence, one of $\ell''_\eta(1,0;1)$ or $\ell''_\eta(1,2;1)$ is primitively represented by $N$. Therefore, by (1), $\ell$ is primitively represented by $L$. Now, assume that $b\equiv 0\Mod4$. If we define a $\z$-lattice $\ell^{(4)}$ by
\[
 \ell^{(4)} = \begin{cases}
  \ell''_0(0,1;0) & \text{if }a\equiv 8\Mod{16}\text{,}\\
  \ell'_0(1,0;0) & \text{if }c\equiv 8\Mod{16}\text{,}\\
  \ell''_0(0,4;0) & \text{if }a\equiv c\equiv 12\Mod{16}\text{ and }b\equiv 0\Mod8\text{,}\\
  \ell''_0(0,0;2) & \text{if }a\equiv c\equiv 12\Mod{16}\text{ and }b\equiv 4\Mod8\text{,}
 \end{cases}
\]
then one may easily show that
\[
 d\ell^{(4)}\equiv 8\Mod{16}\text{,}\quad \ell^{(4)}_2\cong\langle 12, -4\rangle\text{,}\quad \text{or}\quad d\ell^{(4)}\cong 32\Mod{64}\text{.}
\]
Hence, $\ell^{(4)}$ is primitively represented by $N$, which implies that $\ell$ is primitively represented by $L$ in each case.

Finally, we consider the remaining case when $a = 4$. Note that $b = 0$ or $b = 2$. It is well known that the $4$-section $N'\cong I_2\mathbin{\perp}\mathbb{A}$ of $L$ is primitively $1$-universal (see \cite{Bu10}). If we choose a primitive vector $v$ in $N'$ such that $Q(v) = c$, then clearly, $\z[e_5 - e_6, v]$ is a primitive sublattice of $L$ which is isometric to $\ang{4, c}$. If we choose a vector $w$ in $N'$ such that $Q(w) = c - 3$, then clearly,
\[
 \z[e_5 - e_6, w + e_5]
\]
is a primitive sublattice of $L$ which is isometric to $\left(\begin{smallmatrix}4&2\\2&c\end{smallmatrix}\right)$.
\end{proof}

Next, we consider the case when $L\cong$ D$^{\mbox{\scriptsize iii}}_5$ or I$^{\mbox{\scriptsize ii}}_5$. The quaternary primitive $\z$-sublattice $N$ of $L$ given in Table~\ref{tblr:classtwo} has class number two and its genus mate $N'$ is also primitively represented by $L$. Hence, any binary $\z$-lattice that is represented by the genus of $N$ is primitively represented by $L$. Note that $dN = 16$ and the sublattice $\z[e_3, e_4, e_5, e_6]$ of $L$ is isometric to $N$ for both cases.

\begin{table}[ht]
\caption{The $\z$-lattice $N$ and its genus mate $N'$}
\label{tblr:classtwo}
\renewcommand{\arraystretch}{1.2}
\begin{tabular}{c|c|c}
 \hline
 $L$ & $N$ & $N'$\\\hline
 D$^{\mbox{\scriptsize iii}}_5$ & $\langle 1\rangle \mathbin{\perp}\left(\begin{smallmatrix}2&0&1\\0&2&1\\1&1&5\end{smallmatrix}\right)$\rule[-9pt]{0pt}{24pt} & $I_3\mathbin{\perp}\langle 16\rangle$\\\hline
 I$^{\mbox{\scriptsize ii}}_5$ & $\left(\begin{smallmatrix}2&1&1&1\\1&2&1&1\\1&1&2&0\\1&1&0&5\end{smallmatrix}\right)$\rule[-12pt]{0pt}{30pt} & $I_2\mathbin{\perp}\left(\begin{smallmatrix}4&2\\2&5\end{smallmatrix}\right)$\\\hline
\end{tabular}
\end{table}

Below we provide a complete proof of the primitive $2$-universality of D$^{\mbox{\scriptsize iii}}_5$. The proof of the primitive $2$-universality of I$^{\mbox{\scriptsize ii}}_5$ is quite similar to this. Recall that $\alpha$, $\beta$ denote integers in $\z_p$, and $\epsilon$, $\delta$ denote units in $\z_p$, unless stated otherwise, where the prime $p$ could be easily verified from the context.

\begin{thm}\label{thm:pfd35}
\noindent\textup{(1)} If a quaternary $\z_2$-lattice $I_3\mathbin{\perp}\ang{16}$ does not primitively represent a binary $\z_2$-lattice $\ell_2$, then $\ell_2$ satisfies one of the following conditions:

 $\ell_2\cong\ang{-1, \alpha}$, $\ang{1, 12}$, $\ang{1, 8}$, $\ang{1, 40}$, $\ang{5, 24}$, $\ang{5, -8}$, $\ang{1, 64\alpha}$, $\ang{3, 32\alpha}$, $\ang{5, 64\alpha}$, $\ang{2, 10}$, $\ang{6, 6}$, $\ang{2\epsilon, 12}$, $\ang{2, 8}$, $\ang{-2, 24}$, $\ang{2\epsilon, 32\delta}$ with $\epsilon\delta\equiv 3\Mod8$, $\ang{2\epsilon, 128\alpha}$, $\fn (\ell_2)\subseteq 4\z_2$, or $\q_2 \ell_2\cong \q_2 \mathbb{H}$.
 
\noindent\textup{(2)} Let $\ell \cong \left(\begin{smallmatrix}a&b\\b&c\end{smallmatrix}\right)$ be a binary $\z$-lattice. Suppose that, for some integers $s$, $t$, and $t'$,
\[
 \ell'\cong \begin{pmatrix}a-2s^2 & b-2st\\b-2st & c-2t^2 - 8t'^2\end{pmatrix}
\]
is positive definite and $\ell'_2$ is primitively represented by the quaternary $\z_2$-lattice $I_3\mathbin{\perp}\ang{16}$. Then $\ell$ is primitively represented by the $\z$-lattice $L\cong$ \textup{D$^{\mbox{\scriptsize iii}}_5$}.

\noindent\textup{(3)} The $\z$-lattice $L\cong$ \textup{D$^{\mbox{\scriptsize iii}}_5$} $\cong I_3\mathbin{\perp}\left(\begin{smallmatrix}2&0&1\\0&2&1\\1&1&5\end{smallmatrix}\right)$ is primitively $2$-universal.
\end{thm}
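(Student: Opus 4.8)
The plan is to handle the three parts in order, exploiting the decomposition $L\cong$ D$^{\mbox{\scriptsize iii}}_5\cong I_2\perp N\cong I_3\perp T$, where $T\cong\left(\begin{smallmatrix}2&0&1\\0&2&1\\1&1&5\end{smallmatrix}\right)$, the sublattice $N\cong\langle1\rangle\perp T=\z e_3+\z e_4+\z e_5+\z e_6$ of discriminant $16$ is the quaternary primitive sublattice from Table~\ref{tblr:classtwo}, and $I_2=\z e_1+\z e_2$. The structural facts I would lean on are that $N$ has class number two with genus mate $N'\cong I_3\perp\langle16\rangle$, that both $N$ and $N'$ sit primitively in $L$, and that the core prime of $N$ is $2$, so the only local obstruction to primitive representation lives at $p=2$. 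Thus part (1) is a purely $2$-adic classification, part (2) is a local-to-global lifting lemma, and part (3) combines them with the description of the binary exceptions of the $5$-section $M\cong I_3\perp\langle2,2\rangle$ furnished by Lemma~\ref{lem:5excep}.

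For part (1) I would Jordan-decompose $(I_3\perp\langle16\rangle)_2=\langle1,1,1\rangle\perp\langle16\rangle$ and run through all binary $\z_2$-lattices $\ell_2$, organized by their Jordan invariants (scale, norm, discriminant modulo powers of $2$, and $\q_2$-isometry type), deciding primitive representability in each case. Since the unimodular block $\langle1,1,1\rangle$ already primitively represents a large supply of binary $\z_2$-lattices, the exceptions are precisely those whose second Jordan constituent is forced into the $2^4$-modular block $\langle16\rangle$; tracking the residues of the entries modulo $8$, $16$, $32$, $64$, and $128$ then produces the listed family. This enumeration is finite but delicate, and I expect it to be the main technical obstacle of the theorem.

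For part (2) I would first run the genus argument: since $\ell'$ is positive definite and $\ell'_2$ is primitively represented by $N'_2=(I_3\perp\langle16\rangle)_2$, while for each odd prime $p$ one has $N'_p\cong\mathbb{H}\perp\mathbb{H}$ (because $16$ is a unit square there), the lattice $\ell'$ is primitively represented by $N_p$ for every prime $p$, hence by some class in the genus of $N$, i.e.\ by $N$ or by $N'$ (see \cite[102:5]{OM}). The lift is then explicit: taking $v=-e_4-e_5+2e_6$, one checks $Q(v)=16$ and that $N'=\z e_1+\z e_2+\z e_3+\z v\cong I_3\perp\langle16\rangle$ has orthogonal complement $(N')^\perp=\z e_4+\z e_5\cong\langle2,2\rangle$ in $L$, which represents the shift matrix $\left(\begin{smallmatrix}2s^2&2st\\2st&2t^2+8t'^2\end{smallmatrix}\right)$ via the vectors $s e_4$ and $t e_4+2t' e_5$ (while $N^\perp=I_2$ likewise contains a copy of $\langle2,2\rangle$). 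Adding these vectors to a primitive representation of $\ell'$ by $N'$ (or by $N$) yields vectors whose Gram matrix is exactly $\ell$, and I would upgrade this to a \emph{primitive} representation inside $L$ by adjusting the second vector by an element of the complement, exactly as in the proof of Lemma~\ref{lem:357super}, with primitivity guaranteed by Lemma~\ref{lem:notog}.

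Finally, for part (3), Lemma~\ref{lem:5excep} reduces the problem to binary $\z$-lattices $\ell$ with $\ell_2\cong\langle1,16\alpha\rangle$, $\langle4,16\alpha\rangle$, $\langle20,16\alpha\rangle$, or $\fn(\ell_2)\subseteq16\z_2$, since all other $\ell$ are already primitively represented by $M$ and hence by $L$. For each such local type I would exhibit integers $s$, $t$, $t'$ making $\ell'\cong\left(\begin{smallmatrix}a-2s^2&b-2st\\b-2st&c-2t^2-8t'^2\end{smallmatrix}\right)$ positive definite and, by part (1), primitively represented over $\z_2$ by $I_3\perp\langle16\rangle$; part (2) then delivers a primitive representation of $\ell$ by $L$. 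For large $c$ this succeeds with room to spare, using the free parameters $t,t'$ to steer the second diagonal entry and its $2$-adic residue, while the finitely many small values of $c$ are checked directly. The two places that demand genuine care are the $2$-adic enumeration of part (1) and the verification, local type by local type, that the shift can always push $\ell'_2$ out of the exceptional list of part (1); the latter is where the forbidden residues $16\alpha$ and $20$ coming from Lemma~\ref{lem:5excep} must be reconciled with the forbidden residues produced in part (1).
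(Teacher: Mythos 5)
Your proposal matches the paper's proof in all essentials: part (1) by direct $2$-adic computation, part (2) by the class-number-two genus argument for $N=\z[e_3,e_4,e_5,e_6]$ with genus mate $N'=\z[e_1,e_2,e_3,e_4+e_5-2e_6]\cong I_3\mathbin{\perp}\ang{16}$ together with the explicit shift vectors $se_4$ and $te_4+2t'e_5$ in $(N')^\perp=\z e_4+\z e_5\cong\ang{2,2}$, and part (3) by intersecting the exceptional local types from Lemma~\ref{lem:5excep} with those from part (1) and checking the finitely many small $c$ directly. The one caveat is that in part (2) the paper secures primitivity of the lifted sublattice not via Lemma~\ref{lem:notog} --- whose corank-one, single-modified-vector hypotheses do not hold here, a real issue since $N'\mathbin{\perp}(N')^\perp$ has index $2$ in $L$ --- but by the direct computation that the gcd of the $2\times2$ minors of the lifted coefficient matrix divides $\gcd(g_1,g_2)=1$ with $g_1\mid 2$ and $g_2$ odd, which is precisely the Lemma~\ref{lem:357super}-style explicit verification you also invoke as a fallback.
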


\begin{proof}
(1) One may easily verify the assertion by a direct computation.

\noindent (2) Consider two primitive sublattices of $L$,
\[
 N = \z[e_3, e_4, e_5, e_6]\quad \text{and}\quad N' = \z[e_1, e_2, e_3, e_4 + e_5 - 2e_6]\cong I_3\mathbin{\perp}\ang{16}\text{,}
\]
where the ordered basis $\{e_i\}_{i=1}^6$ for $L$ corresponds to the Gram matrix given in the statement. Moreover, we fix an ordered basis for $N'$ corresponding to the Gram matrix given above. Note that the class number of $N$ is two and $N'$ is the other class is the genus of $N$. Hence, if $\ell'$ satisfies all conditions given above, then $\ell'$ is primitively represented by either $N$ or $N'$.

If $\ell'$ is primitively represented by $N$, then one may directly check that $\ell$ is primitively represented by $L$. Now, suppose that the primitive sublattice
\[
 \z\begin{bmatrix}c_1 & c_2 & c_3 & c_4\\d_1 & d_2 & d_3 & d_4\end{bmatrix}
\]
of $N'$ is isometric to $\ell'$. Then clearly, the sublattice
\[
 \z\begin{bmatrix}c_1 & c_2 & c_3 & c_4 + s & c_4 & -2c_4\\d_1 & d_2 & d_3 & d_4 + t & d_4 + 2t' & -2d_4\end{bmatrix}
\]
of $L$ is isometric to $\ell$. To see that such a sublattice is primitive, note that the greatest common divisor of all $2\times 2$ submatrices of the above coefficient matrix divides $(g_1, g_2)$, where $g_1$ and $g_2$ are the greatest common divisor of all $2\times 2$ submatrices of
\[
 \begin{pmatrix}c_1 & c_2 & c_3 & -2c_4\\d_1 & d_2 & d_3 & -2d_4\end{pmatrix}\quad \text{and}\quad \begin{pmatrix}c_1 & c_2 & c_3 & c_4\\d_1 & d_2 & d_3 & d_4 + 2t'\end{pmatrix}\text{,}
\]
respectively. Since $\left(\begin{smallmatrix}c_1 & c_2 & c_3 & c_4\\d_1 & d_2 & d_3 & d_4\end{smallmatrix}\right)$ is primitive, $g_1$ divides $2$ and $g_2$ is odd. Hence, $(g_1, g_2) = 1$.

\noindent (3) Let $\ell\cong \left(\begin{smallmatrix}a&b\\b&c\end{smallmatrix}\right)$ ($0\le 2b\le a\le c$) be a $\z$-lattice which is primitively represented by neither the genus of $N$ nor the $5$-section of $L$. Then by Lemma~\ref{lem:5excep} and by (1) given above, we may assume that $\ell$ satisfies one of the following conditions:
\begin{itemize}
\item[(i)] $\ell_2\cong \langle 1, -16\rangle$;
\item[(ii)] $\ell_2\cong \langle 1, 64\alpha\rangle$ for some $\alpha\in\z_2$;
\item[(iii)] $\ell_2\cong\langle 4, 16\alpha\rangle$ or $\langle 20, 16\alpha\rangle$ for some $\alpha\in\z_2$;
\item[(iv)] $\fn(\ell_2)\subseteq 16\z_2$.
\end{itemize}
If $c\le 21$, then one may directly check that $\ell$ is primitively represented by $L$. Hence, we may assume that $c\ge 22$. If either
\[
 \ell'(s, t; s') \cong \left(\begin{smallmatrix}a-2s^2 - 8s'^2 & b-2st\\b-2st & c-2t^2\end{smallmatrix}\right)\quad\text{or}\quad \ell''(s, t; t') \cong \left(\begin{smallmatrix}a-2s^2 & b-2st\\b-2st & c-2t^2 - 8t'^2\end{smallmatrix}\right)
\]
satisfies all conditions given in (2), then $\ell$ is primitively represented by $L$.

Assume that case (i) holds. First, suppose that $a$ is odd. Note that $a\equiv 1\Mod8$. Since $d\ell''(0,2;1)\equiv 32\Mod{64}$, $\ell''(0,2;1)_2 \cong \langle 1, 32\epsilon \rangle$ is primitively represented by $N_2$. Since $\ell''(0,2;1)$ is positive definite, $\ell$ is primitively represented by $L$. Now, suppose that $a$ is even. Note that $c\equiv 1\Mod8$, and we have
\[
 a\equiv 20\Mod{32}\text{,} \quad a\equiv -16\Mod{64}\text{,} \quad \text{or} \quad a\equiv 0\Mod{128}\text{.}
\]
Since $\ell'(2,0;1)$ is positive definite, $\ell$ is primitively represented by $L$, by the similar reasoning.

Next, assume that case (ii) holds. Suppose that $a$ is odd. Note that $a\equiv 1\Mod8$. Since $d\ell''(0,0;1)\equiv -8\Mod{64}$, $\ell''(0,0;1)_2 \cong \langle 1, -8 \rangle$ is primitively represented by $N_2$. Furthermore, since $\ell''(0,0;1)$ is positive definite, $\ell$ is primitively represented by $L$. Now, suppose that $a$ is even. Note that $c\equiv 1\Mod8$, and we have
\[
 a\equiv 4\Mod{32}\text{,} \quad a\equiv 16\Mod{64}\text{,} \quad \text{or} \quad a\equiv 0\Mod{64}\text{.}
\]
If $a\ge 16$, then $\ell'(0,0;1)$ is positive definite. Hence, $\ell$ is primitively represented by $L$ in this case. If $a = 4$, then $\ell''(0,0;1)_2 \cong \langle 1, 32\epsilon \rangle$ and $\ell''(0,0;1)$ is positive definite. Hence, $\ell$ is primitively represented by $L$.

Now, assume that case (iii) holds. First, suppose that $a\equiv c\equiv 4\Mod{16}$. Note that $b\equiv 4\Mod8$. One may easily show that there is an $\eta\in\{1, -1\}$ such that $d\ell''(1, \eta; 0)\equiv 32\Mod{64}$. Then, $\ell''(1, \eta; 0)_2 \cong \langle 2, 16\epsilon \rangle$ is primitively represented by $N_2$. Since $\ell''(1, \eta; 0)$ is positive definite, $\ell$ is primitively represented by $L$. Next, suppose that $a\equiv 4\Mod{16}$, $b\equiv 0\Mod8$, and $c\equiv 0\Mod{16}$. In this case, either $\ell''(1,0;0)$ or $\ell''(1,2;0)$ is isometric to $\langle 2, 16\epsilon \rangle$ over $\z_2$. Furthermore, since it is positive definite, $\ell$ is primitively represented by $L$. Similarly to the above, if $a\equiv 0\Mod{16}$, $b\equiv 0\Mod8$, and $c\equiv 4\Mod{16}$, then $\ell$ is primitively represented by $L$.

Finally, assume that case (iv) holds. If we define a $\z$-lattice $\ell'''$ by
\newdimen\ofontdimentwo%
\newdimen\ofontdimensam%
\newdimen\ofontdimennes%
\newdimen\ofontdimenqil%
\ofontdimentwo=\fontdimen2\font%
\ofontdimensam=\fontdimen3\font%
\ofontdimennes=\fontdimen4\font%
\ofontdimenqil=\fontdimen7\font%
\fontdimen2\font=2.7pt%
\fontdimen3\font=1.35pt%
\fontdimen4\font=.9pt%
\fontdimen7\font=.9pt%
\[
 \ell''' = \begin{cases}
  \ell''(0,1;0) & \text{if }a\equiv 16\Mod{32}\text{,}\\
  \ell'(0,1;0) & \text{if }c\equiv 16\Mod{32}\text{,}\\
  \ell'(1,1;0) & \text{if }a\equiv c\equiv 0\Mod{32}\text{ and }b\equiv 8\Mod{16}\text{,}\\
  \ell''(1,0;1) & \text{if }a\equiv 0\Mod{32}\text{, }b\equiv 0\Mod{16}\text{, and }c\equiv 0\Mod{64}\text{,}\\
  \ell'(0,1;1) & \text{if }a\equiv 0\Mod{64}\text{, }b\equiv 0\Mod{16}\text{, and }c\equiv 0\Mod{32}\text{,}\\
  \ell'(1,1;1) & \text{if }a\equiv c\equiv 16\Mod{32}\text{ and }b\equiv 0\Mod{32}\text{,}\\
  \ell'(1,0;0) & \text{if }a\equiv 32\Mod{64}\text{, }b\equiv 16\Mod{32}\text{, }c\equiv -32\Mod{128}\text{,}\\
  \ell'(0,1;0) & \text{if }a\equiv -32\Mod{128}\text{, }b\equiv 16\Mod{32}\text{, }c\equiv 32\Mod{64}\text{,}\\
  \ell'(1,1;0) & \text{if }a\equiv c\equiv 32\Mod{128}\text{ and }b\equiv -16\Mod{64}\text{,}\\
  \ell'(1,3;0) & \text{if }a\equiv c\equiv 32\Mod{128}\text{ and }b\equiv 16\Mod{64}\text{,}
 \end{cases}
\]
\fontdimen2\font=\ofontdimentwo%
\fontdimen3\font=\ofontdimensam%
\fontdimen4\font=\ofontdimennes%
\fontdimen7\font=\ofontdimenqil%
then one may easily show that
\[
 d\ell'''\equiv 16\Mod{128}\text{,}\quad d\ell'''\equiv 32\Mod{64}\text{,}\quad \text{or}\quad d\ell'''\equiv 64\Mod{256}\text{.}
\]
Hence, $\ell$ is primitively represented by $L$ in each case.
\end{proof}
%
%
%
%
%
%

Finally, assume that $L\cong$ I$^{\mbox{\scriptsize ii}}_4$ or I$^{\mbox{\scriptsize iii}}_4$. Note that the quaternary orthogonal summand $\z[e_3, e_4, e_5, e_6]$ of $L$ has a nonsquare discriminant. Hence, such a quaternary $\z$-lattice is not primitively $2$-universal over $\z_p$ for infinitely many primes $p$.

\begin{thm}\label{thm:pfi24}
\noindent\textup{(1)} The $\z$-lattice $N\cong \left(\begin{smallmatrix}2&1&1&1\\1&2&1&1\\1&1&2&0\\1&1&0&4\end{smallmatrix}\right)$ primitively represents any binary $\z$-lattice $\ell'$ satisfying all of the following three conditions:
\begin{itemize}
\item[(a)] $\fn (\ell'_2)\subseteq 2\z_2$ and $\ell'_2$ represents some element in $\{6, -2, 4, 20\}$;
\item[(b)] $\ell'_3$ represents $\Delta_3$ or $3$;
\item[(c)] $\ell'_p$ represents a unit in $\z_p$ for any odd prime $p$ with $\bigl(\frac3p\bigr) = -1$, where $\bigl(\frac{\cdot}{\cdot}\bigr)$ is the Legendre symbol.
\end{itemize}
\noindent\textup{(2)} If the quinary $\z$-lattice $M\cong \ang{2}\mathbin{\perp} N$ does not primitively represent a positive definite binary $\z$-lattice $\ell$, then $\ell$ satisfies $\fn (\ell_2) = \z_2$, $\ell_3\cong \ang{3\cdot\Delta_3, 9\alpha}$ for some $\alpha\in\z_3$, or $\fs (\ell_3)\subseteq 9\z_3$.

\noindent\textup{(3)} The $\z$-lattice \textup{I$^{\mbox{\scriptsize ii}}_4$} $\cong I_2\mathbin{\perp} N$ is primitively $2$-universal.
\end{thm}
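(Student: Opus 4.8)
Since each part of the statement feeds the next, the plan is to prove them in order, reserving the genuinely new difficulty for part~(3).

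\emph{Parts (1) and (2): the local preparation.} First I would record the genus-theoretic input for $N$. A direct computation gives $dN=12$, so $\q N$ has determinant in the square class of $3$. Consequently, for every odd prime $p\neq 3$ the lattice $N_p$ is unimodular with determinant in the class of $3$: it splits two hyperbolic planes (hence primitively represents every binary $\z_p$-lattice) exactly when $\bigl(\frac{3}{p}\bigr)=1$, while for $\bigl(\frac{3}{p}\bigr)=-1$ it splits only one $\mathbb{H}$ together with an anisotropic unit plane, so that a binary $\z_p$-lattice is primitively represented if and only if it represents a unit. At $p=2$ and $p=3$ the obstruction is governed by the Jordan splittings of $N_2$ and $N_3$. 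After verifying (a finite computation) that $N$ has class number one, the local--global principle \cite[102:5]{OM} reduces primitive representation by $N$ to primitive representation by every $N_p$, and the conditions (a), (b), (c) are then precisely the translations of this local criterion at $p=2$, at $p=3$, and at the bad odd primes, respectively; this gives part~(1). For part~(2) I would pass to the primitive quinary sublattice $M=\langle 2\rangle\perp N=\z(e_1+e_2)\perp N$. Because $M$ has rank five, $M_p$ splits $\mathbb{H}\perp\mathbb{H}$ at every odd prime, so $M_p$ primitively represents all binary $\z_p$-lattices for $p\neq 2,3$; in particular the nonsquare-discriminant obstruction of $N$ disappears upon enlarging to $M$. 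Hence the only primitive binary exceptions of $M$ are detected at $p=2$ and $p=3$, and computing $M_2$ and $M_3$ yields the stated list $\fn(\ell_2)=\z_2$, $\ell_3\cong\langle 3\Delta_3,9\alpha\rangle$, or $\fs(\ell_3)\subseteq 9\z_3$.

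\emph{Part (3): the transfer.} Since $M$ is a primitive sublattice of $L$, any $\ell$ primitively represented by $M$ is primitively represented by $L$; by part~(2) it remains to treat $\ell\cong\left(\begin{smallmatrix}a&b\\b&c\end{smallmatrix}\right)$ whose $2$-adic norm is a unit or whose $3$-adic part is one of the two exceptional shapes. For such $\ell$ I would exploit the splitting $L=I_2\perp N$ with $I_2=\langle 1,1\rangle=N^\perp$ together with the transfer underlying part~(1): for integers $s_1,s_2,t_1,t_2$ put
\[
 \ell'\cong\begin{pmatrix}a-s_1^2-s_2^2 & b-s_1t_1-s_2t_2\\ b-s_1t_1-s_2t_2 & c-t_1^2-t_2^2\end{pmatrix}.
\]
If $\ell'$ is positive definite and satisfies (a), (b), (c), then $N$ primitively represents $\ell'$ by part~(1) through vectors $w_1,w_2\in N$, and the vectors $s_1e_1+s_2e_2+w_1$ and $t_1e_1+t_2e_2+w_2$ span a sublattice of $L$ isometric to $\ell$ whose primitivity is guaranteed by Lemma~\ref{lem:notog}. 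Thus everything reduces to a choice of shift. I would organize the argument by the finitely many $2$-adic and $3$-adic classes of $\ell$ surviving part~(2), and in each class select $s_i,t_i$ modulo small powers of $2$ and $3$ to force $\ell'$ into the $2$-adic type demanded by (a) and the $3$-adic type demanded by (b), finally disposing of the finitely many small values of $c$ by direct inspection.

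\emph{The main obstacle.} The hard part is condition (c). In contrast to the square-discriminant cases, $N$ fails to primitively represent binary lattices at the infinitely many odd primes with $\bigl(\frac{3}{p}\bigr)=-1$, and these primes are not given explicitly, so they cannot be cleared one at a time. The way out is to arrange the shift so that the odd part of the content $\gcd(a',b',c')$ is trivial---most cleanly, so that $\ell'$ has content exactly $2$: then $\ell'_p$ represents a unit at \emph{every} odd prime simultaneously and (c) holds for free. Since every residue modulo an odd prime is a sum of two squares, the two square slots $s_1^2+s_2^2$ and $t_1^2+t_2^2$ afford, via the Chinese remainder theorem, enough freedom to clear all odd primes dividing the content while preserving the prescribed residues modulo $2$ and $3$. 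Reconciling this ``trivial odd content'' requirement of (c) with the evenness forced by (a) is the delicate point, and aiming for content exactly $2$ is what makes the two compatible.
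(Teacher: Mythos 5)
Your parts (1) and (2) match the paper: class number one for $N$ and for $M\cong\ang{2}\mathbin{\perp}N$, the local--global principle, and the same local analysis ($N_p\cong\mathbb{H}\mathbin{\perp}\mathbb{H}$ iff $\bigl(\frac3p\bigr)=1$, anisotropic unit plane otherwise, obstructions for $M$ only at $p=2,3$). The gap is in part (3), and it is structural, not a matter of detail. Your plan routes \emph{every} surviving exception through the quaternary $N$ via shifts by vectors of $I_2$. But $N$ is an \emph{even} lattice, so condition (a) forces the shifted lattice $\ell'$ to have $\fn(\ell'_2)\subseteq2\z_2$, while $M\cong\ang{2}\mathbin{\perp}N$ also has even norm, so among the lattices your part (2) leaves over are \emph{all} binary lattices of odd norm --- in particular $\ell\cong\ang{1,c}$ with $c\equiv0\Mod{32}$, infinitely many of them. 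For such $\ell$ your transfer can never fire: any vector of length $1$ in $L=I_2\mathbin{\perp}N$ lies entirely in $I_2$ (since $N$ is even and positive definite), so the $N$-components $w_1,w_2$ of any representation of $\ell$ span a rank-$\le1$ sublattice and can never be a primitive pair; equivalently, one can check directly that no choice of $s_i,t_i$ (in any basis of $\ell$) makes $\ell'$ simultaneously positive definite and of even norm --- the odd corner entry $1$ kills definiteness. Your closing remark that finitely many small $c$ are checked by hand does not help, since $c$ is unbounded here. The paper escapes precisely by \emph{not} relying on $N$ alone: it brings in the second class-number-one quinary sublattice $O=I_2\mathbin{\perp}\left(\begin{smallmatrix}2&1&1\\1&2&1\\1&1&2\end{smallmatrix}\right)$ (the $5$-section of $L$, which has odd norm), with $O^\perp\cong\ang{12}$, and uses Lemma~\ref{lem:notog} to shift $a$ or $c$ by $12\cdot2^2=48$; the odd-norm cases $\ell_2\cong\ang{1,32\alpha}$ and $\ang{5,16\epsilon}$ are handled there, and the $N$-transfer of part (1) is reserved for finitely many residual subcases with $a\in\{4,16,32,36,48\}$.

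This also defuses what you called the main obstacle, and exposes a secondary weakness in your treatment of it. In the paper, condition (c) never needs a content-clearing argument: in the only place $N$ is invoked, the entry $a$ has odd part a power of $3$, and $p=3$ is excluded from (c) (it is governed by (b)), so (c) holds vacuously; everything else goes through $O$ and $M$, whose local obstructions sit at $p=2,3$ only. Your CRT argument is, as stated, circular: the odd primes dividing $\gcd(a',b',c')$ are determined only \emph{after} choosing $s_i,t_i$, so they cannot all be prescribed in one congruence system. This is repairable by a two-stage choice (fix the $s_i$ first, so $a'$ is a concrete integer with a finite set of odd prime divisors, then choose the $t_i$ modulo those primes so that $p\nmid c'$ whenever $p\mid a'$), but you would still need to reconcile this with the $2$- and $3$-adic congruences, with positive definiteness, and --- fatally, per the previous paragraph --- it cannot rescue the odd-norm lattices at all.
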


\begin{proof}
(1) Note that $N$ has class number one and $dN = 12$. Hence, any binary $\z$-lattice $\ell'$ is primitively represented by $N$ if and only if $\ell'_p$ is primitively represented by $N_p$ for any prime $p$. Since $N_2\cong \mathbb{A}\mathbin{\perp}\ang{-2, -2}\cong \mathbb{H}\mathbin{\perp}\ang{6, -2}$, $\ell'_2$ is primitively represented by $N_2$ if $\fn (\ell'_2)\subseteq(2)$ and $\ell'_2$ represents some element in $\{6, -2, 4, 20\}\subset\z_2$. Since $N_3\cong \mathbb{H}\mathbin{\perp}\ang{\Delta_3, 3}$, $\ell'_3$ is primitively represented by $N_3$ if $\ell'_3$ represents $\Delta_3$ or $3$. Now, suppose $p\ne 2$, $3$. If $\bigl(\frac3p\bigr) = 1$ then $N_p\cong \mathbb{H}\mathbin{\perp} \mathbb{H}$, and hence $N_p$ is primitively $2$-universal. If $\bigl(\frac3p\bigr) = -1$, that is, $dN_p = \Delta_p$, then $N_p\cong \mathbb{H}\mathbin{\perp}\ang{1, -\Delta_p}$. Hence, $\ell'_p$ is primitively represented by $N_p$ if it represents a unit in $\z_p$. Note that for any odd prime $p$, $\bigl(\frac3p\bigr) = -1$ if and only if $p\equiv 5$, $7\Mod{12}$.

\noindent (2) Note that $M$ has class number one, $dM = 24$, and $3$ is the only core prime of $M$. Hence, any binary lattice $\ell$ is primitively represented by $M$ if and only if $\ell_p$ is primitively represented by $M_p$ for $p=2$, $3$. Note that $M_2\cong \mathbb{H}\mathbin{\perp} \mathbb{H}^2\mathbin{\perp} \ang{6}$ primitively represents any binary lattice $\ell_2$ satisfying $\fn (\ell_2)\subseteq 2\z_2$, and $M_3\cong \mathbb{H}\mathbin{\perp} \ang{1,1,3}$ primitively represents all binary lattices representing $1$, $\Delta_3$, or $3$.

\noindent (3) Let $\ell\cong \left(\begin{smallmatrix}a&b\\b&c\end{smallmatrix}\right)$ ($0\le 2b\le a\le c$) be a binary $\z$-lattice which is primitively represented by neither the $5$-section of $L$ nor the primitive sublattice
\[
 M = \z[e_1+e_2, e_3, e_4, e_5, e_6]\cong \ang{2}\mathbin{\perp} N
\]
of $L$. Then by Lemma~\ref{lem:5excep} and by (1) given above, we may assume that $\ell$ satisfies one of the following conditions:
\begin{itemize}
\item[(i)] $\ell_2\cong \langle 1, 32\alpha\rangle$ for some $\alpha\in\z_2$;
\item[(ii)] $\ell_2\cong \langle 5, 16\epsilon\rangle$ for some $\epsilon\in\z_2^\times$;
\item[(iii)] $a\equiv b\equiv c\equiv 0\Mod{12}$.
\end{itemize}

First, assume that case (iii) holds. Observe that $M^\perp = \z[e_1 - e_2] \cong \ang{2}$ and $e_1 - e_2 = - (e_1 + e_2) + 2e_1$. Hence, if $\ell'\cong \left(\begin{smallmatrix}a&b\\b&c-2\cdot 2^2\end{smallmatrix}\right)$ is primitively represented by $M$, then $\ell$ is primitively represented by $L$ by Lemma~\ref{lem:notog}. In fact, $\ell'$ is primitively represented by $M$ for $\fs (\ell'_2)\subseteq 4\z_2$ and $\fs (\ell'_3) = \z_3$.

Denote by $O$ the $5$-section of $L$. Then $O^\perp = \z(- e_3 - e_4 + e_5 + 2e_6)\cong \ang{12}$. Hence, if
\[
 \ell''\cong \begin{pmatrix}a-12\cdot 2^2&b\\b&c\end{pmatrix}\quad \text{or}\quad \ell'''\cong \begin{pmatrix}a&b\\b&c-12\cdot 2^2\end{pmatrix}
\]
is primitively represented by $O$, then $\ell$ is primitively represented by $L$ by Lemma~\ref{lem:notog}. Moreover, $\ell''$ ($\ell'''$) is primitively represented by $O$ if and only if $\ell''$ ($\ell'''$) is positive definite and $\ell''_2$ ($\ell'''_2$, respectively) is primitively represented by $O_2$.

Now, assume that case (i) holds. If $c\le 64$, then one may directly check that $\ell$ is primitively represented by $L$. Now, we assume that $c\ge 65$. First, suppose that $a$ is odd. Since $d\ell'''\equiv 16\Mod{32}$, $\ell'''_2$ is primitively represented by $O_2$. Furthermore, since $\ell'''$ is positive definite, it is primitively represented by $O$. Now, suppose that $a$ is even. Since $c$ is odd, similarly to the above, $\ell''$ is primitively represented by $O$ if $a\ge 64$ so that $\ell''$ is positive definite. Hence, we may assume that $a < 64$. Note that $c\equiv 1\Mod8$ and one of the following conditions holds: 
\begin{itemize}
\item[(I)] $a\equiv 4\Mod{32}$ and $b\equiv 2\Mod4$;
\item[(II)] $a\equiv 16\Mod{32}$ and $b\equiv 4\Mod8$;
\item[(III)] $a\equiv 0\Mod{32}$ and $b\equiv 0\Mod8$.
\end{itemize}
We define
\[
 \ell^{(4)} \cong \begin{cases}
  \left(\begin{smallmatrix}a&b\\b&c-1-4\end{smallmatrix}\right) & \text{if }a=48\text{,}\\
  \left(\begin{smallmatrix}a-4&b\\b&c-1\end{smallmatrix}\right) & \text{if }a=36\text{,}\\
  \left(\begin{smallmatrix}a&b\\b&c-9-4\end{smallmatrix}\right) & \text{if }a=32\text{ and }c\equiv 1\Mod{16}\text{,}\\
  \left(\begin{smallmatrix}a&b\\b&c-1-4\end{smallmatrix}\right) & \text{if }a=32\text{ and }c\equiv 9\Mod{16}\text{,}\\
  \left(\begin{smallmatrix}a&b\\b&c-9-4\end{smallmatrix}\right) & \text{if }a=16\text{ and }c\equiv 0\Mod3\text{,}\\
  \left(\begin{smallmatrix}a&b\\b&c-1-4\end{smallmatrix}\right) & \text{if }a=16\text{ and }c\not\equiv 0\Mod3\text{,}\\
  \left(\begin{smallmatrix}a&b\\b&c-1\end{smallmatrix}\right) & \text{if }a=4\text{ and }c\equiv 0,1\Mod3\text{,}\\
  \left(\begin{smallmatrix}a&b\\b&c-9\end{smallmatrix}\right) & \text{if }a=4\text{ and }c\equiv 2\Mod3\text{.}
 \end{cases}
\]
Then by (1), $\ell^{(4)}$ is primitively represented by $N$. Hence, $\ell$ is primitively represented by $L$ in each case. The proof of case (ii) is quite similar to this.
\end{proof}

\begin{thm}
\noindent\textup{(1)} The $\z$-lattice $N\cong \left(\begin{smallmatrix}2&1&1&1\\1&2&1&1\\1&1&2&1\\1&1&1&4\end{smallmatrix}\right)$ primitively represents a positive definite binary $\z$-lattice $\ell'$ if all of the following three conditions hold:
\begin{itemize}
\item[(a)] $\fn\ell'_2\subseteq 2\z_2$ and $\ell'_2$ represents twice an odd integer;
\item[(b)] $\ell'_{13}$ represents $1$ or $13$;
\item[(c)] $\ell'_p$ represents a unit in $\z_p$ for any odd prime $p$ with $\bigl(\frac{13}{p}\bigr) = \bigl(\frac{p}{13}\bigr) = -1$, where $\bigl(\frac{\cdot}{\cdot}\bigr)$ is the Legendre symbol.
\end{itemize}
\noindent\textup{(2)} The lattice \textup{I$^{\mbox{\scriptsize iii}}_4$} $\cong I_2\mathbin{\perp} N$ is primitively $2$-universal.
\end{thm}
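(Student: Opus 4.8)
The plan is to reproduce the two-step architecture of Theorem~\ref{thm:pfi24}: part (1) is a purely local statement about the class-number-one quaternary orthogonal summand $N$ (here $dN = 4\cdot 4 - 3 = 13$), and part (2) routes the binary forms that escape the class-number-one $5$-section of $L$ into $N$ through the complement $I_2 = N^\perp$. I would first dispose of (1), then use it as a black box inside (2).

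For part (1), since $N$ has class number one and $dN=13$ is squarefree, $\ell'$ is primitively represented by $N$ if and only if it is so over every $\z_p$, so I only need sufficient local criteria. The lattice $N$ is even and $2$-adically unimodular with $dN\equiv 5\pmod 8$, which forces $N_2\cong \mathbb{H}\perp\mathbb{A}$; the established sufficient condition for primitive representation by this lattice of an even binary $\z_2$-lattice is precisely that it represent an element of $2\z_2^\times$, i.e. condition (a). Over $\z_{13}$ one has $N_{13}\cong \mathbb{H}\perp\langle\epsilon,13\delta\rangle$, isotropic, for which representing $1$ or $13$ suffices, i.e. condition (b). For an odd prime $p\neq 13$, $dN$ is a unit, and if $\left(\frac{13}{p}\right)=1$ then $N_p\cong\mathbb{H}\perp\mathbb{H}$ is primitively $2$-universal, while if $\left(\frac{13}{p}\right)=-1$ then $N_p\cong\mathbb{H}\perp\langle 1,-\Delta_p\rangle$ and a unit-value of $\ell'_p$ suffices, i.e. condition (c); since $13\equiv 1\pmod 4$, quadratic reciprocity rewrites $\left(\frac{13}{p}\right)=\left(\frac{p}{13}\right)$, matching the hypothesis.

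For part (2), let $\ell\cong\left(\begin{smallmatrix}a&b\\b&c\end{smallmatrix}\right)$ with $0\le 2b\le a\le c$ be a binary form primitively represented by neither $L$ nor its class-number-one $5$-section $M\cong I_2\perp\left(\begin{smallmatrix}2&1&1\\1&2&1\\1&1&2\end{smallmatrix}\right)$. By Lemma~\ref{lem:5excep} (type I, core prime $2$) I may assume $\ell_2\cong\langle 1,32\alpha\rangle$, $\langle 5,16\epsilon\rangle$, $\langle 4,32\alpha\rangle$, or $\fn(\ell_2)\subseteq 16\z_2$. Because $N=\z[e_3,e_4,e_5,e_6]$ is an orthogonal summand with $N^\perp=\z[e_1,e_2]\cong I_2$, I can subtract from the diagonal of $\ell$ a value represented by $I_2$, namely a sum of two squares, together with a matching off-diagonal correction, to form an auxiliary $\ell'$; Lemma~\ref{lem:notog} then lifts any primitive representation of $\ell'$ by $N$ to a primitive representation of $\ell$ by $L$. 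The three conditions of part (1) would be arranged thus: (a) from the prescribed $2$-adic shape of $\ell$; (b) by choosing the subtracted sum of two squares in the correct square class modulo $13$, which is possible because $x^2+y^2$ is isotropic, hence universal, over $\f_{13}$; and the reduction is performed on $c$ so as to keep $\ell'$ positive definite, with the finitely many small-$c$ forms checked directly.

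The decisive obstacle, flagged just before the theorem, is that $N$ fails primitive $2$-universality over $\z_p$ for the infinitely many primes with $\left(\frac{p}{13}\right)=-1$, so condition (c) must hold at all of them simultaneously. The saving maneuver is to keep $\ell'$ representing a fixed value that is a unit at every odd prime — a power of $2$, or a small integer coprime to $13$ — so that (c) becomes automatic; in the forms $\langle 1,32\alpha\rangle$, $\langle 5,16\epsilon\rangle$, $\langle 4,32\alpha\rangle$ the entry $a\in\{1,5,4\}$ already serves this purpose. The genuinely delicate case is $\fn(\ell_2)\subseteq 16\z_2$, where $a,b,c$ are all highly $2$-divisible and no small unit entry survives; there one must satisfy the high-power-of-$2$ congruence for (a) and the mod-$13$ congruence for (b) at once, splitting into subcases on $a,b,c$ modulo a high power of $2$. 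For the residual configurations I would fall back on the $5$-section route, whose complement $\z[4e_6-e_3-e_4-e_5]\cong\langle 52\rangle$ has core prime $2$ and hence imposes no condition away from $2$, thereby sidestepping the infinitude of bad primes entirely.
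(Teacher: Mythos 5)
Your part (1) is correct and is essentially the paper's own argument for the sibling lattice I$^{\mbox{\scriptsize ii}}_4$ (Theorem~\ref{thm:pfi24}(1)) transposed to $dN=13$: $N$ has class number one, $N_2\cong\mathbb{H}\mathbin{\perp}\mathbb{A}$ because $dN\equiv 5\Mod 8$, $N_{13}\cong\mathbb{H}\mathbin{\perp}\langle 1,13\rangle$, and for $p\nmid 26$ one has $N_p\cong\mathbb{H}\mathbin{\perp}\mathbb{H}$ or $\mathbb{H}\mathbin{\perp}\langle 1,-\Delta_p\rangle$ according to $\bigl(\frac{13}{p}\bigr)$, so (a)--(c) are sufficient local criteria; only the class number matters here, the squarefreeness of $13$ plays no role.

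Part (2), however, has a genuine gap in how you assign the exceptional $2$-adic shapes to the $N$-route. For $\ell_2\cong\langle 1,32\alpha\rangle$ or $\langle 5,16\epsilon\rangle$, condition (a) is \emph{violated} by, not implied by, ``the prescribed $2$-adic shape of $\ell$'': with corrections subtracted from $c$ alone, either $a$ is odd and $\ell'$ stays odd, or ($a$ even, $c$ odd) every odd sum of two squares is $\equiv 1\Mod 4$, so $c'\equiv 0\Mod 4$ while $a\equiv 0\Mod{16}$ and $b\equiv 0\Mod 4$, forcing $\fn(\ell'_2)\subseteq 4\z_2$; in both cases $\ell'$ never represents twice an odd integer, and $N$, being $2$-adically even, cannot represent an odd lattice at all. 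For the genuinely occurring small forms such as $\langle 1,32m\rangle$ there is also no room to subtract from $a$. The paper sends exactly these cases through the odd quinary $5$-section, whose complement $\z[e_3+e_4+e_5-4e_6]\cong\langle 52\rangle$ has $e_6$-coefficient $4$, so Lemma~\ref{lem:notog}(3) forces an even scalar and one subtracts $52\cdot 2^2=208$, checking small $c$ by hand --- whereas you reserve the $5$-section fallback only for residuals of the $\fn(\ell_2)\subseteq 16\z_2$ case. Your mechanism for (c) is also flawed: ``the entry $a\in\{1,5,4\}$ already serves'' conflates the $2$-adic square class with the integer; $a=49\equiv 1\Mod 8$ is a legitimate instance of $\langle 1,32\alpha\rangle$, and $49$ is divisible by $7$ with $\bigl(\frac{13}{7}\bigr)=-1$, so (c) is not automatic. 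The paper's proof avoids both problems with the quinary sublattice $\z[e_1+e_2,e_3,\dots,e_6]\cong\langle 2\rangle\mathbin{\perp}N$ (class number one, obstructions only at $2$ and the core prime $13$), which your proposal never introduces: all large exceptional forms go through the two quinary lattices, and part (1) is invoked only for finitely many explicit small values of $a$, bounded by the positive-definiteness threshold of the quinary subtraction, whose prime divisors are visibly harmless --- that is what makes (c) genuinely automatic there. Without $\langle 2\rangle\mathbin{\perp}N$, or at least a CRT-avoidance argument at the finitely many bad primes dividing your fixed entries, your case analysis does not close. (A minor point: for the true orthogonal summand $N^\perp=I_2$ you do not need Lemma~\ref{lem:notog} at all --- primitivity of the lifted sublattice is automatic --- the lemma is needed precisely for the two non-splitting complements above.)
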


\begin{proof}
Since the proof is quite similar to that of the above theorem, it is left to the readers.
\end{proof}

\subsection{Case 6: The remaining cases}
\label{ssec:55}

In this subsection, we prove the primitive $2$-universalities of the remaining lattices
\[
 \text{D$^{\mbox{\scriptsize ii}}_k$ ($4\le k\le 8$), D$^{\mbox{\scriptsize iii}}_6$, D$^{\mbox{\scriptsize iii}}_7$, H$^{\mbox{\scriptsize iii}}_6$, and I$^{\mbox{\scriptsize iii}}_2$.}
\]
Note that the $5$-section of $L$ does not split $L$ orthogonally in all cases.

Assume that $L\cong$ D$^{\mbox{\scriptsize ii}}_k$. Note that one may prove the primitive $2$-universality of $L$ in the similar manner to Theorem~\ref{thm:pf43} by using Lemma~\ref{lem:superd2h} instead of Lemma~\ref{lem:357super}.

Next, assume that $L\cong$ D$^{\mbox{\scriptsize iii}}_k$ for $k = 6$ or $7$.

\begin{lem}\label{lem:superd3}
Let $\ell \cong \left(\begin{smallmatrix}a&b\\b&c\end{smallmatrix}\right)$ be a $\z$-lattice and let $m$, $k\ge 2$ be positive integers. Suppose that there exists a primitive sublattice
\[
 \z\begin{bmatrix}c_1 & \cdots & c_m & c_{m+1} & c_{m+2}\\d_1 & \cdots & d_m & d_{m+1} & d_{m+2}\end{bmatrix} \cong \begin{pmatrix}a-(k-1)s^2 & b-(k-1)st\\b-(k-1)st & c-(k-1)t^2\end{pmatrix}
\]
of the $\z$-lattice $I_{m+2}$ for some integers $c_i$, $d_i$, $s$, and $t$. If
\[
 c_{m+1} + c_{m+2} + s \equiv d_{m+1} + d_{m+2} + t \equiv 0\Mod2\text{,}
\]
then $\ell$ is primitively represented by $I_m\mathbin{\perp}\left(\begin{smallmatrix}2&0&1\\0&2&1\\1&1&k\end{smallmatrix}\right)$.
\end{lem}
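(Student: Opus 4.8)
The plan is to imitate the explicit super-representation argument used for Lemma~\ref{lem:357super}: I will build two vectors in $I_m\mathbin{\perp} T$, where $T=\left(\begin{smallmatrix}2&0&1\\0&2&1\\1&1&k\end{smallmatrix}\right)$, that span a sublattice isometric to $\ell$, and then check primitivity. Write $\{e_1,\dots,e_{m+2}\}$ for the standard basis of $I_{m+2}$ and $\{f_1,\dots,f_m,g_1,g_2,g_3\}$ for the basis of $I_m\mathbin{\perp} T$ realizing the stated Gram matrix, and let $u_1=\sum_i c_i e_i$, $u_2=\sum_i d_i e_i$ be the two vectors spanning the given primitive sublattice of $I_{m+2}$, so that $Q(u_1)=a-(k-1)s^2$, $Q(u_2)=c-(k-1)t^2$, and $B(u_1,u_2)=b-(k-1)st$.

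The crucial observation is that $T$ rationally splits off a copy of $\langle k-1\rangle$: completing the square gives $Q(xg_1+yg_2+zg_3)=(k-1)z^2+\tfrac12(2x+z)^2+\tfrac12(2y+z)^2$. I would set $z=s$ and use the elementary identity $2(p^2+q^2)=(p+q)^2+(p-q)^2$ to force $(2x+s,\,2y+s)=(c_{m+1}+c_{m+2},\,c_{m+1}-c_{m+2})$, that is, $x=\tfrac12(c_{m+1}+c_{m+2}-s)$ and $y=\tfrac12(c_{m+1}-c_{m+2}-s)$. This is exactly where the hypothesis $c_{m+1}+c_{m+2}+s\equiv 0\Mod2$ enters: it makes $x,y$ integers. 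Put $w_1=xg_1+yg_2+sg_3\in T$, and define $w_2=x'g_1+y'g_2+tg_3\in T$ analogously from $d_{m+1},d_{m+2},t$, which is integral by the second parity hypothesis. My candidates are then $v_1=\sum_{i\le m}c_i f_i+w_1$ and $v_2=\sum_{i\le m}d_i f_i+w_2$.

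Next I would verify the isometry. The completed-square form yields at once $Q(w_1)=(k-1)s^2+c_{m+1}^2+c_{m+2}^2$, $Q(w_2)=(k-1)t^2+d_{m+1}^2+d_{m+2}^2$, and $B(w_1,w_2)=(k-1)st+c_{m+1}d_{m+1}+c_{m+2}d_{m+2}$. Adding the contributions $\sum_{i\le m}c_i^2$, $\sum_{i\le m}d_i^2$, $\sum_{i\le m}c_id_i$ from the orthogonal $I_m$ part and invoking the known values of $Q(u_1),Q(u_2),B(u_1,u_2)$ recovers precisely $Q(v_1)=a$, $Q(v_2)=c$, $B(v_1,v_2)=b$. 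Hence $\z[v_1,v_2]\cong\ell$.

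The step demanding care, and the one I expect to be the main obstacle, is primitivity of $\z[v_1,v_2]$ in $I_m\mathbin{\perp} T$. Let $A$ be the $2\times(m+2)$ coefficient matrix of $u_1,u_2$ and $A'$ the $2\times(m+3)$ coefficient matrix of $v_1,v_2$ with respect to $\{f_1,\dots,f_m,g_1,g_2,g_3\}$, whose last three columns are $(x,x')^t$, $(y,y')^t$, $(s,t)^t$. Inverting the defining relations gives $c_{m+1}=x+y+s$ and $c_{m+2}=x-y$ (and likewise for the $d$'s), so every column of $A$ is a $\z$-linear combination of columns of $A'$. By multilinearity of the determinant, every $2\times2$ minor of $A$ is therefore a $\z$-linear combination of $2\times2$ minors of $A'$; since $A$ is primitive, the greatest common divisor of the minors of $A'$ divides $1$, so $A'$ is primitive, which is exactly the assertion that $\z[v_1,v_2]$ is a primitive sublattice (this is the same minor bookkeeping as in Lemma~\ref{lem:notog}). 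The only genuine subtlety is confirming that the two parity congruences are precisely what is needed both for the integrality of $w_1,w_2$ and for the consistency of the whole construction.
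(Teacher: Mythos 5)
Your proof is correct and is essentially the argument the paper intends: the paper omits the proof as ``quite similar to that of Lemma~\ref{lem:357super}'', and your construction is the direct analogue, with your completing-the-square identity realizing $\left(\begin{smallmatrix}2&0&1\\0&2&1\\1&1&k\end{smallmatrix}\right)$ as the index-two sublattice of $\langle 1,1,k-1\rangle$ on which the two parity hypotheses are exactly the integrality conditions for $x$, $y$, $x'$, $y'$. Your Gram verification and the primitivity check---inverting to $c_{m+1}=x+y+s$, $c_{m+2}=x-y$ (likewise for the $d$'s), so every column of $A$ is a $\z$-linear combination of columns of $A'$ and every $2\times 2$ minor of $A$ is a $\z$-linear combination of minors of $A'$---are precisely the column-operation bookkeeping used in the paper's proof of Lemma~\ref{lem:357super}.
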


\begin{proof}
Since the proof is quite similar to that of Lemma~\ref{lem:357super}, it is left to the readers.
\end{proof}

\begin{lem}\label{lem:d3coeff}
Let $\ell\cong \left(\begin{smallmatrix}a&b\\b&c\end{smallmatrix}\right)$ be a binary $\z$-lattice. Suppose that there exists a primitive sublattice $\z\left[\begin{smallmatrix}c_1 & c_2 & c_3 & c_4 & c_5\\d_1 & d_2 & d_3 & d_4 & d_5\end{smallmatrix}\right]$ of the $\z$-lattice $I_5$, which is isometric to $\ell$, for some integers $c_i$ and $d_i$ such that the set
\[
 \{(\overline{\vphantom{d}c_i + c_j}, \overline{d_i + d_j})\mid 1\le i < j\le 5\}
\]
is a proper subset of $(\z/2\z)^2$, where $\overline{n}$ is the residue class of $n$ modulo $2$ for any integer $n$. Then $\ell$ satisfies one of the followings:
\begin{itemize}
\item[\textup{(i)}] $a$ or $c\equiv 1\Mod4$ and $d\ell\equiv 0\Mod4$;
\item[\textup{(ii)}] $a\not\equiv 1\Mod8$, $c\not\equiv 1\Mod8$, and $d\ell\equiv 2\Mod4$.
\end{itemize}
\end{lem}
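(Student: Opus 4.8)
The plan is to extract everything from the embedding into $I_5$ and reduce the statement to a finite check modulo $8$. Since the sublattice sits inside $I_5$, whose bilinear form is the standard dot product, the isometry gives
\[
 a = \sum_{i=1}^5 c_i^2, \qquad b = \sum_{i=1}^5 c_i d_i, \qquad c = \sum_{i=1}^5 d_i^2,
\]
so $a$, $b$, $c$, and hence $d\ell = ac - b^2$, are all determined modulo any power of $2$ by the mod-$2$ residues of the columns $v_i := (c_i, d_i)$. Writing $\overline{v}_i\in\f_2^2$ for these residues and $S = \{\overline{v}_1, \dots, \overline{v}_5\}\subseteq\f_2^2$, the first observation I would record is that primitivity of the $2\times 5$ coefficient matrix implies, at the prime $2$, that its two rows are linearly independent over $\f_2$; equivalently the columns $\overline{v}_i$ span $\f_2^2$, which in $\f_2^2$ means that $S$ contains two distinct nonzero vectors.

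Next I would reinterpret the hypothesis. Because $(\overline{c_i + c_j}, \overline{d_i + d_j}) = \overline{v}_i + \overline{v}_j$, the set in the statement is exactly the set of pairwise sums of elements of $S$ over distinct indices. The key claim is that this set is a proper subset of $\f_2^2$ if and only if $|S|\le 2$. Indeed, if $|S|\ge 3$ then three distinct vectors of $\f_2^2$ already have pairwise sums exhausting the three nonzero elements (whether or not $(0,0)\in S$), and since the five columns lie in at most four classes two of them coincide, contributing $(0,0)$ as well; thus the pairwise-sum set is all of $\f_2^2$. Combining the properness hypothesis with primitivity forces $|S| = 2$ with both elements nonzero and distinct, so that
\[
 S = \{(1,0),(0,1)\}, \quad \{(1,0),(1,1)\}, \quad \text{or}\quad \{(0,1),(1,1)\}.
\]

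It then remains to run these three configurations as a short computation. Let $p$ and $r$ be the numbers of odd $c_i$ and odd $d_i$, and let $m$ be the number of columns equal to $(1,1)$ mod $2$. Since an odd square is $\equiv 1\Mod8$ and an even square is $\equiv 0\Mod4$, one has $a\equiv p$, $c\equiv r\Mod4$, and $b\equiv m\Mod2$, so $d\ell\equiv ac-b^2\Mod4$ is immediate in each case. For $S = \{(1,0),(0,1)\}$ one gets $r = 5-p$, $b$ even, and $d\ell\equiv p(5-p)\Mod4$, which yields (i) for $p\in\{1,4\}$ and (ii) for $p\in\{2,3\}$; the configurations $\{(1,0),(1,1)\}$ and $\{(0,1),(1,1)\}$ are symmetric under $c\leftrightarrow d$, have $p=5$ (resp. $r=5$) and $r=m$ (resp. $p=m$), and split on $m\in\{1,2,3,4\}$ to give (i) for $m\in\{1,4\}$ and (ii) for $m\in\{2,3\}$.

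The one point needing care — and precisely the reason condition (ii) is stated modulo $8$ rather than modulo $4$ — arises in the two configurations $\{(1,0),(1,1)\}$ and $\{(0,1),(1,1)\}$: there all five $c_i$ (respectively all five $d_i$) are odd, so $a$ (respectively $c$) is a sum of five odd squares and hence $\equiv 5\Mod8$. Thus even in the subcases where $a\equiv 1\Mod4$ one still has $a\not\equiv 1\Mod8$, which is exactly what (ii) requires. The hard part of the proof is therefore just remembering to push $a$ or $c$ to the mod-$8$ refinement in these two configurations; every other verification follows at once from the mod-$4$ values of $a$, $c$, and $d\ell$ computed above.
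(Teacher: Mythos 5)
Your proof is correct and takes essentially the same route as the paper's: reduce the columns modulo $2$, use primitivity plus the properness of the pairwise-sum set to force the set of column residues to be one of the three pairs $\{(1,0),(0,1)\}$, $\{(1,0),(1,1)\}$, $\{(0,1),(1,1)\}$, and then compute $a$, $b$, $c$, and $d\ell$ modulo $4$, with the mod-$8$ refinement $a\equiv 5$ or $c\equiv 5\pmod 8$ when all five entries of a row are odd. You merely spell out the case analysis (via $p$, $r$, $m$) that the paper compresses into its conditions (a)--(c) and the phrase ``the lemma follows directly.''
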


\begin{proof}
Consider the set $C = \{(\overline{\vphantom{d}c_i}, \overline{d_i})\mid 1\le i \le 5\}$. Since $\z\left[\begin{smallmatrix}c_1 & c_2 & c_3 & c_4 & c_5\\d_1 & d_2 & d_3 & d_4 & d_5\end{smallmatrix}\right]$ is a primitive sublattice of $I_5$, the set $C$ contains at least two nonzero vectors in $(\z/2\z)^2$. Furthermore, one may easily show from the assumption that $C$ is one of 
\[
 \{(\overline{1}, \overline{0}), (\overline{0}, \overline{1})\}\text{,}\quad \{(\overline{1}, \overline{0}), (\overline{1}, \overline{1})\}\text{,}\quad \{(\overline{0}, \overline{1}), (\overline{1}, \overline{1})\}\text{,}
\]
which corresponds to each of the followings, respectively:
\begin{itemize}
\item[\textup{(a)}] $a+c\equiv 1\Mod4$ and $b$ is even;
\item[\textup{(b)}] $a\equiv 5\Mod8$ and $b\equiv c\Mod2$;
\item[\textup{(c)}] $a\equiv b\Mod2$ and $c\equiv 5\Mod8$.
\end{itemize}
The lemma follows directly from this.
\end{proof}

\begin{thm}\label{thm:pfd3}
The $\z$-lattice \textup{D$^{\mbox{\scriptsize iii}}_k$} $\cong I_3\mathbin{\perp}\left(\begin{smallmatrix}2&0&1\\0&2&1\\1&1&k\end{smallmatrix}\right)$ for $k = 6$ or $7$ is primitively $2$-universal.
\end{thm}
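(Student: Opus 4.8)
The plan is to follow the template of Theorem~\ref{thm:pf43} and Theorem~\ref{thm:pfd35}, using Lemma~\ref{lem:superd3} with $m=3$ (so that $I_{m+2}=I_5$ and the target lattice is exactly \textup{D}$^{\mbox{\scriptsize iii}}_k$) as the transfer device, together with Lemma~\ref{lem:d3coeff} to control the parity condition appearing in that lemma. First I would let $\ell\cong\left(\begin{smallmatrix}a&b\\b&c\end{smallmatrix}\right)$ with $0\le 2b\le a\le c$ be a binary $\z$-lattice that is not primitively represented by the $5$-section $M=I_3\mathbin{\perp}\langle 2,2\rangle$; by Lemma~\ref{lem:5excep} (type \textup{D}) we may assume
\[
 \ell_2\cong\langle 1,16\alpha\rangle,\ \langle 4,16\alpha\rangle,\ \langle 20,16\alpha\rangle,\quad\text{or}\quad \fn(\ell_2)\subseteq 16\z_2\text{.}
\]
By Lemma~\ref{lem:superd3}, it then suffices to produce integers $s$ and $t$ such that
\[
 \ell'\cong\begin{pmatrix}a-(k-1)s^2 & b-(k-1)st\\ b-(k-1)st & c-(k-1)t^2\end{pmatrix}
\]
is positive definite, is primitively represented by $I_5$, and admits a primitive representation in $I_5$ whose fourth and fifth columns $(c_4,d_4)$, $(c_5,d_5)$ satisfy $c_4+c_5+s\equiv d_4+d_5+t\equiv 0\Mod 2$.

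For the local criterion I would use that $I_5$ has class number one and core prime $2$ (type \textup{A} of Table~\ref{tblr:5excep}), so $\ell'$ is primitively represented by $I_5$ exactly when $(\ell')_2$ is primitively represented by $(I_5)_2\cong\langle 1,1,1,1,1\rangle$, that is, when $(\ell')_2\not\cong\langle 1,8\beta\rangle$ and $\fn((\ell')_2)\not\subseteq 4\z_2$. The parity requirement is precisely where Lemma~\ref{lem:d3coeff} enters: if $\ell'$ avoids both conditions (i) and (ii) of that lemma, then no primitive representation of $\ell'$ in $I_5$ can have a proper coset set, so every primitive representation realizes all four classes of $(\z/2\z)^2$ among the pairs $(\overline{c_i+c_j},\overline{d_i+d_j})$; in particular the pair $(\overline s,\overline t)$ occurs, and after reordering columns the parity condition of Lemma~\ref{lem:superd3} is met. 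Thus the whole problem reduces to choosing $s$, $t$ for which $\ell'$ is positive definite, $(\ell')_2$ is primitively represented by $(I_5)_2$, and $\ell'$ satisfies neither (i) nor (ii).

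The core of the argument is then a short $2$-adic computation in each of the four cases above, split further by $k=6$ (so $k-1=5$ is odd) and $k=7$ (so $k-1=6$ is even). Using $d\ell'=d\ell-(k-1)(cs^2-2bst+at^2)$, the guiding idea is that subtracting $(k-1)s^2$ and $(k-1)t^2$ from the diagonal entries shifts their $2$-adic residues, and because $k-1\in\{5,6\}$ one can always select small $s$, $t\in\{0,1,2\}$ of prescribed parity so that $d\ell'$ becomes odd, or at least so that $d\ell'\not\equiv 0\Mod 4$; an odd discriminant forces $(\ell')_2\cong\langle\epsilon,\delta\rangle$, which is neither of the form $\langle 1,8\beta\rangle$ nor of norm inside $4\z_2$, and simultaneously rules out conditions (i) and (ii) since these demand $d\ell'\equiv 0$ or $2\Mod 4$. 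For the forms with $\fn(\ell_2)\subseteq 4\z_2$, namely $\langle 4,16\alpha\rangle$, $\langle 20,16\alpha\rangle$, and $\fn(\ell_2)\subseteq 16\z_2$, one instead shifts a diagonal entry out of $4\z_2$ and tracks $\ord_2(d\ell')$ carefully so as to avoid landing in $\langle 1,8\beta\rangle$; when this forces $d\ell'\equiv 2\Mod 4$, condition (ii) is still escaped by arranging one of $a'$, $c'$ to be $\equiv 1\Mod 8$. Finally, positive definiteness of $\ell'$ holds once $c$ exceeds an explicit bound, and the finitely many lattices with small $c$ are checked directly.

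The main obstacle I expect is exactly this $2$-adic bookkeeping: one must verify, uniformly over the four exceptional forms and over both values of $k$, that a valid pair $(s,t)$ exists avoiding simultaneously the two $I_5$-exception conditions and the two conditions (i), (ii) of Lemma~\ref{lem:d3coeff}. The case $k=7$ is the more delicate one, since the even factor $k-1=6$ changes $2$-adic valuations less freely than the odd factor $5$, so the heavily divisible forms, especially $\fn(\ell_2)\subseteq 16\z_2$ with high $2$-divisibility of $a$, $b$, $c$, will require several parity subcases in $(s,t)$ before a clean $\ell'$ is obtained.
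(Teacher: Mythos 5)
Your architecture is exactly the paper's: Lemma~\ref{lem:5excep} (type D) to constrain $\ell_2$, Lemma~\ref{lem:superd3} with $m=3$ as the transfer to D$^{\mbox{\scriptsize iii}}_k$, and the contrapositive of Lemma~\ref{lem:d3coeff} to force the parity class $(\overline{s},\overline{t})$ among the pair sums (the paper only ever uses $(s,t)=(1,0)$ or $(0,1)$, written $\ell(1)$, $\ell(2)$, which suffices for the generic case). But there is a genuine gap in your degenerate cases: the infinite family $\ell\cong\left(\begin{smallmatrix}4&2\\2&c\end{smallmatrix}\right)$ with $c\equiv 1\Mod8$ (which lies in your case $\ell_2\cong\ang{1,16\alpha}$), for $k=6$. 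Positive definiteness forces $s=0$ (since $4-5s^2<0$ for $s\ne 0$), so $\ell'\cong\left(\begin{smallmatrix}4&2\\2&c-5t^2\end{smallmatrix}\right)$ and $d\ell'=4(c-5t^2-1)\equiv 0\Mod4$ for every $t$. If $t$ is odd, then $c-5t^2\equiv 4\Mod8$ and $\fn(\ell'_2)\subseteq 4\z_2$, which is precisely a primitive exception of $(I_5)_2$ (type A of Table~\ref{tblr:5excep}); if $t$ is even, then $c-5t^2\equiv 1\Mod4$ and condition (i) of Lemma~\ref{lem:d3coeff} holds, so your contrapositive mechanism concludes nothing. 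Hence no $(s,t)$ meets your three requirements simultaneously, and since $c$ is unbounded this family is not absorbed by your ``finitely many lattices with small $c$'' clause. The paper disposes of exactly this family (along with $a=1$ and $a=4$, $b=0$) by a separate construction: $I_3$ represents $c-k\equiv 2,3\Mod8$ by Legendre's three-square theorem, and for $v$ in the $3$-section with $Q(v)=c-k$ the sublattice $\z[e_4+e_5,\,v+e_6]$ is a primitive copy of $\left(\begin{smallmatrix}4&2\\2&c\end{smallmatrix}\right)$ in $L$.

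The gap is repairable inside your framework by one observation you omitted: the class $(\overline{0},\overline{0})$ \emph{always} occurs among the pair sums, since among the five vectors $(\overline{c_i},\overline{d_i})\in(\z/2\z)^2$ two must coincide by pigeonhole; so for $s\equiv t\equiv 0\Mod2$ you need no input from Lemma~\ref{lem:d3coeff} at all. Taking $(s,t)=(0,2)$ in the family above gives $\ell'\cong\left(\begin{smallmatrix}4&2\\2&c-20\end{smallmatrix}\right)$ with $c-20\equiv 5\Mod8$ and $d\ell'=16\cdot(\text{odd})$, so $\ell'_2$ splits a unit congruent to $5\Mod8$ and is neither $\ang{1,8\alpha}$ nor of norm in $4\z_2$; hence $\ell'$ is primitively represented by $I_5$ once $c\ge 22$, and the small $c$ are finitely many. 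A minor further slip: in your case $\fn(\ell_2)\subseteq 16\z_2$ with $k=7$, all entries of $\ell'$ stay even, so $d\ell'$ can never be odd nor $\equiv 2\Mod4$ and your plan to ``arrange $a'$ or $c'\equiv 1\Mod8$'' is vacuous --- harmlessly, though, since both (i) and (ii) of Lemma~\ref{lem:d3coeff} then fail automatically (each needs an odd diagonal entry modulo $4$ or $d\ell'\equiv 2\Mod4$), and only local representability must be checked, e.g.\ $(s,t)=(1,0)$ gives $a'\equiv 10\Mod{16}$, hence $\fn(\ell'_2)=2\z_2$.
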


\begin{proof}
Let $\ell\cong \left(\begin{smallmatrix}a&b\\b&c\end{smallmatrix}\right)$ be a binary $\z$-lattice such that $0\le 2b\le a\le c$. Note that the $5$-section $M \cong I_3 \mathbin{\perp} \langle 2, 2 \rangle$ in this case has class number one. Hence, we may assume that $\ell$ is not primitively represented by $M$ locally, that is, one of the following conditions holds:
\begin{itemize}
\item[(i)] $\ell_2\cong\langle 1, 16\alpha\rangle$ for some $\alpha\in\z_2$;
\item[(ii)] $\ell_2\cong\langle 4, 16\alpha\rangle$ or $\langle 20, 16\alpha\rangle$ for some $\alpha\in\z_2$;
\item[(iii)] $\fn(\ell_2)\subseteq 16\z_2$.
\end{itemize}
Note that we have $a\equiv 1\Mod8$, $a\equiv 4\Mod{16}$, or $a\equiv 0\Mod{16}$. Assume that both of the $\z$-lattices
\[
 \ell(1) \cong \begin{pmatrix}a-(k-1)&b\\b&c\end{pmatrix}\text{,}\qquad \ell(2) \cong \begin{pmatrix}a&b\\b&c-(k-1)\end{pmatrix}
\]
are positive definite. Then one may easily show that $\ell(s)$ is primitively represented by $I_5$ for some $s = 1$, $2$. Let $N = \z\left[\begin{smallmatrix}c_1 & c_2 & c_3 & c_4 & c_5\\d_1 & d_2 & d_3 & d_4 & d_5\end{smallmatrix}\right]$ be a primitive binary $\z$-sublattice of $I_5$ which is isometric to $\ell(s)$. If there is an $(i, j)$ with $1\le i<j\le 5$ such that
\[
 c_i + c_j + (2-s) \equiv d_i + d_j + (s-1)\equiv 0\Mod2\text{,}
\]
then $\ell$ is primitively represented by \textup{D$^{\mbox{\scriptsize iii}}_k$} by Lemma~\ref{lem:superd3}. If there does not exist such an $(i, j)$, then by Lemma~\ref{lem:d3coeff}, one of the followings must hold:
\begin{itemize}
\item[(a)] $a-(2-s)(k-1)\equiv 1\Mod4$ or $c-(s-1)(k-1)\equiv 1\Mod4$, and $d\ell(s)\equiv 0\Mod4$;
\item[(b)] $a-(2-s)(k-1)\not\equiv 1\Mod8$, $c-(s-1)(k-1)\not\equiv 1\Mod8$, and $d\ell(s)\equiv 2\Mod4$.
\end{itemize}
However, one may easily verify that none of (a) and (b) holds in each case. For instance, consider case (i) when $k = 6$. If $a$ is odd, then $a\equiv 1\Mod8$. Since $d\ell(2)\equiv 3\Mod8$, $\ell(2)$ is primitively represented by $I_5$ so that we may take $s = 2$. Since $d\ell(2)$ is odd, $\ell(2)$ satisfies neither (a) nor (b). Now, suppose that $a$ is even. Then $a\equiv 0\Mod4$ and $c\equiv 1\Mod8$. Therefore, similarly to the above, $\ell(1)$ is primitively represented by $I_5$ and hence $\ell(1)$ does not satisfy any of (a) and (b).

Now, we have to consider the case when neither $\ell(1)$ nor $\ell(2)$ is positive definite. Note that if $a\ge 9$, then both $\ell(1)$ and $\ell(2)$ are positive definite. Hence, we may assume that $a = 1$ or $a = 4$. If $a = 1$, then $b = 0$ and $c\equiv 0\Mod8$ by (i). Since $\ell(2)$ is positive definite if $c\ge 9$, one may apply the same method as the above to prove the theorem. One may directly check that $\ell$ is primitively represented by $L$ if $c\le 8$. Now, assume that $a = 4$. Then we have either $b = 0$ or $b = 2$. If $b = 0$, then $c\equiv 0\Mod{16}$ by (ii). Hence, $\ell(2)$ is positive definite, and we may apply the same method to the above to prove the theorem. If $b = 2$, then $c\equiv 1\Mod8$ by (i). Note that $I_3$ represents $c-k\equiv 2, 3\Mod8$ by Legendre's three-square theorem. If we choose a vector $v$ in the $3$-section of $L$ such that $Q(v) = c-k$, then clearly, $\z[e_4 + e_5, v + e_6]$ is a primitive sublattice of $L$ isometric to $\left(\begin{smallmatrix}4&2\\2&c\end{smallmatrix}\right)$.
\end{proof}

Now, assume $L\cong$ H$^{\mbox{\scriptsize iii}}_6$ or I$^{\mbox{\scriptsize iii}}_2$. We may apply Lemma~\ref{lem:notog} by putting $M = \z[e_1 + e_2, e_3, e_4, e_5, e_6]$ and $n=5$. One may prove the primitive $2$-universality of $L$ for these cases in a similar manner to the proofs of Theorems~\ref{thm:pfj} or \ref{thm:pfd35} by using Lemmas~\ref{lem:h} and \ref{lem:notog}.

Finally, we provide some essential data needed for computations in this section.

\begin{lem}\label{lem:4excep}
For each given quaternary $\z_2$-lattice $N$, the binary $\z_2$-lattice $\ell$ that is not primitively represented by $N$ satisfies one of the conditions given in Table~\ref{tblr:4excep}.
\end{lem}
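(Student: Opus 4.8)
The plan is to treat the statement as a purely local $2$-adic verification, organized Jordan constituent by constituent, and to prove the contrapositive: every binary $\z_2$-lattice $\ell$ avoiding all the conditions in Table~\ref{tblr:4excep} is primitively represented by $N$. For each quaternary $\z_2$-lattice $N$ in the table I would first fix a Jordan splitting of $N$ and isolate inside it an \emph{absorbing plane}: a hyperbolic plane $\mathbb{H}$, an even unimodular plane $\mathbb{A}$, or a unimodular constituent carrying a unit, according to the shape of $N$. Whenever a splitting $N \cong \mathbb{H} \perp K$ (or its $\mathbb{A}$-analogue) is available, the primitive representability of a split binary lattice $\langle \alpha \rangle \perp \langle \theta \rangle$ reduces to the unary condition $\theta \in Q^\ast(K)$, the free variable in the absorbing plane supplying the required primitivity, exactly as in the proof of Lemma~\ref{lem:5excep}. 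So the first concrete step is to compute $Q^\ast(K)$ for each complement $K$, which is an explicit finite union of sets of the form $u\,(\z_2^\times)^2$ and $2^j\z_2^\times$.

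Next I would enumerate the binary $\z_2$-lattices by their local invariants. Every binary $\z_2$-lattice is, after scaling, either diagonal $\langle 2^i\epsilon, 2^j\delta\rangle$ or a scaled even plane $\mathbb{H}^{2^i}$ or $\mathbb{A}^{2^i}$, so up to the relevant data — scale $\fs$, norm $\fn$, discriminant class, and Hasse symbol — only finitely many shapes need inspection once the scale of $\ell$ is bounded relative to that of $N$. For a diagonalizable $\ell \cong \langle a, c\rangle$ I would split off $\langle a\rangle$ when $a$ is a unit (or after an admissible adjustment) and match the remaining value against $Q^\ast(K)$, as in Theorem~\ref{thm:pf42}; the even-plane lattices $\mathbb{H}^{2^i}$ and $\mathbb{A}^{2^i}$ are handled directly from the splittings $N_2 \cong \mathbb{H} \perp \cdots$. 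Recording exactly those shapes for which no primitive representation can be produced yields, for each $N$, the list of conditions in the fourth column. The conditions of the form $\langle \cdot, 2^m\alpha\rangle$ and $\fn(\ell_2)\subseteq 2^m\z_2$ are precisely the images of the high-scale, eventually anisotropic Jordan constituents of $N$, whose obstruction is quantified by Lemma~\ref{lem:aniso4}.

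The hard part will be the dyadic prime itself: unlike the odd case, binary $\z_2$-lattices need not be diagonalizable, the norm ideal $\fn$ and the scale $\fs$ must be distinguished at every step, and there is no closed-form criterion for \emph{primitive} as opposed to ordinary representation over $\z_2$, so one genuinely inspects each Jordan type by hand. In particular the two even unimodular planes behave very differently — $\mathbb{A}$ is anisotropic and, by Lemma~\ref{lem:aniso4}, fails to primitively represent sufficiently deep multiples of $2$ — so the case analysis must branch according to which of $\mathbb{H}$ and $\mathbb{A}$ occurs in $N$. The last point needing care is to confirm the stabilization underlying the finiteness: once the scale of $\ell$ exceeds a threshold determined by $N$, its representation behaviour is forced, so the finitely many remaining shapes give a complete check. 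That threshold is exactly of the type furnished by Lemma~\ref{lem:aniso4}, and verifying it is what makes the tabulated verification exhaustive rather than merely suggestive.
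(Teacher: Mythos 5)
There is a genuine gap, and it sits at the very first step of your plan: the absorbing-plane mechanism does not exist for the lattices in Table~\ref{tblr:4excep}. All four quaternary $\z_2$-lattices $I_4$, $\ang{1,1,2,2}$, $\ang{1,2,2,4}$, and $\ang{1,2}\mathbin{\perp}\left(\begin{smallmatrix}3&1\\1&3\end{smallmatrix}\right)$ are \emph{anisotropic} over $\q_2$ --- this is precisely why the condition $\q_2\ell\cong\q_2\mathbb{H}$ occurs as an exception in every row: a hyperbolic subspace would force $\q_2 N$ to be isotropic, so the hyperbolic plane is the one binary space these lattices never represent. Consequently no splitting $N\cong\mathbb{H}\mathbin{\perp}K$ over $\z_2$ is available, and your ``first concrete step'' --- computing $Q^\ast(K)$ for the hyperbolic complement ``exactly as in the proof of Lemma~\ref{lem:5excep}'' --- cannot be executed even once. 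The analogy with Lemma~\ref{lem:5excep} fails because there $M$ is \emph{quinary}, hence $M_2$ is automatically isotropic and does split a hyperbolic plane (e.g.\ $M_2\cong\mathbb{H}\mathbin{\perp}\ang{3,7,12}$ for type I); the Table~\ref{tblr:4excep} lattices are exactly the opposite situation. The $\mathbb{A}$-fallback rescues only one case: over $\z_2$ one has $I_4\cong\mathbb{A}\mathbin{\perp}\ang{1,3}$ (take $e_1+e_2$ and $e_2+e_3$), but a parity check on vectors of norm $2$ shows that in each of $\ang{1,1,2,2}$, $\ang{1,2,2,4}$, and $\ang{1,2,3,24}\cong\ang{1,2}\mathbin{\perp}\left(\begin{smallmatrix}3&1\\1&3\end{smallmatrix}\right)$ any two norm-$2$ vectors have \emph{even} inner product, so these lattices do not represent $\mathbb{A}$ at all --- consistent with $\mathbb{A}$ (resp.\ $\mathbb{A}^2$) appearing among the tabulated exceptions, and fatal to your claim that the lattices $\mathbb{H}^{2^i}$ and $\mathbb{A}^{2^i}$ ``are handled directly from the splittings $N_2\cong\mathbb{H}\mathbin{\perp}\cdots$'': that statement is backwards, since the $\q_2\mathbb{H}$ line and the $\mathbb{A}$-type lines are conclusions your verification must produce, not inputs it may assume. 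Even in the one case where $\mathbb{A}$ does split off, note $Q^\ast(\mathbb{A})=2\z_2^\times$, so unlike $\mathbb{H}$ it does not absorb an arbitrary value $\alpha$ with a free primitivity variable; the reduction of $\ang{\alpha}\mathbin{\perp}\ang{\theta}$ to the unary condition $\theta\in Q^\ast(K)$ is simply unavailable throughout this lemma.

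For comparison: the paper offers no argument at all here (``direct computations, left to the readers''), so the entire burden is on a correct hands-on verification, and your proposal's sound parts --- the contrapositive formulation, the enumeration of binary $\z_2$-lattices as $\ang{2^i\epsilon,2^j\delta}$, $\mathbb{H}^{2^i}$, $\mathbb{A}^{2^i}$, and the use of Lemma~\ref{lem:aniso4} to cut off the scale and make the check finite --- are the right frame for it. But inside that frame the representation-producing engine must be replaced wholesale: for an anisotropic $N$ one argues vector by vector, choosing a primitive $v\in N$ with $Q(v)=a$ from the explicitly computed set $Q^\ast(N)$, passing to the ternary complement $v^\perp$ (which genuinely varies with $v$, with finitely many possibilities up to the action of the orthogonal group, as in the appeal to \cite[Theorem~5.3.3]{Ki} in Lemma~\ref{lem:anisocut}), and matching $c$ and $b$ there; the conditions $\fn(\ell_2)\subseteq 2^m\z_2$ then come from Lemma~\ref{lem:aniso4} applied to $N$ itself, not merely to ``high-scale Jordan constituents.'' As written, your proof would stall at its first step on every row of the table.
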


\begin{proof}
Since one may prove the lemma by direct computations, the proof is left to the readers.
\end{proof}
\begin{table}[ht]
\caption{The local structures}
\label{tblr:4excep}
\renewcommand{\arraystretch}{1.2}
\newdimen\ofontdimentwo%
\newdimen\ofontdimensam%
\newdimen\ofontdimennes%
\newdimen\ofontdimenqil%
\ofontdimentwo=\fontdimen2\font%
\ofontdimensam=\fontdimen3\font%
\ofontdimennes=\fontdimen4\font%
\ofontdimenqil=\fontdimen7\font%
\fontdimen2\font=2.25pt%
\fontdimen3\font=1.125pt%
\fontdimen4\font=.75pt%
\fontdimen7\font=.75pt%
\begin{tabular}{c|l}
 \hline
 $N$ & \hfil Binary $\z_2$-lattices that are not primitively represented by $N$ \\\hline
 $I_4$ & $\ell_2\cong\ang{\epsilon, 4\alpha}$, $\ang{2, 6}$, $\ang{2\epsilon, 8\alpha}$, $\fn (\ell_2)\subseteq 4\z_2$, or $\q_2 \ell\cong \q_2 \mathbb{H}$\\\hline
 $\ang{1,1,2,2}$ &
 \begin{tabular}{@{}l@{}}
  $\ell_2\cong\ang{\epsilon, 4\delta}$ with $\epsilon\delta\equiv 3\Mod8$, $\ang{\epsilon, 16\alpha}$, $\ang{2\epsilon, 8\alpha}$,\\[-2pt]
  $\ang{4\epsilon, 4\delta}$ with $\epsilon\delta\equiv 3\Mod8$, $\ang{4\epsilon, 16\alpha}$, $\mathbb{A}$, $\fn (\ell_2)\subseteq 8\z_2$,\\[-2pt]
  or $\q_2 \ell\cong \q_2 \mathbb{H}$
 \end{tabular}\\\hline
 $\ang{1,2,2,4}$ &
 \begin{tabular}{@{}l@{}}
  $\ell_2$ is unimodular, $\ell_2\cong\ang{\epsilon, 16\alpha}$, $\ang{2\epsilon, 8\delta}$ with $\epsilon\delta\equiv 3\Mod8$,\\[-2pt]
  $\ang{2\epsilon, 32\alpha}$, $\ang{4\epsilon, 16\alpha}$, $\ang{8\epsilon, 8\delta}$ with $\epsilon\delta\equiv 3\Mod8$, $\ang{8\epsilon, 32\alpha}$,\\[-2pt]
  $\mathbb{A}^2$, $\fn (\ell_2)\subseteq 16\z_2$, or $\q_2 \ell\cong \q_2 \mathbb{H}$
 \end{tabular}\\\hline
 $\ang{1,2}\mathbin{\perp}\left(\begin{smallmatrix}3&1\\1&3\end{smallmatrix}\right)$ &
 \begin{tabular}{@{}l@{}}
  $\ell_2\cong\ang{1, 1}$, $\ang{3, -1}$, $\ang{1, 20}$, $\ang{-1, -4}$,\\[-2pt]
  $\ang{\epsilon, 16\delta}$ with $\epsilon\delta\equiv 3\Mod8$, $\ang{\epsilon, 64\alpha}$, $\ang{2, 2}$, $\ang{2, 6}$, $\ang{2, 10}$,\\[-2pt]
  $\ang{2\epsilon, 16\alpha}$ with $\epsilon\equiv 1\Mod4$, $\ang{2\epsilon, 32\alpha}$ with $\epsilon\equiv -1\Mod4$,\\[-2pt]
  $\ang{12, 12}$, $\ang{4, 20}$, $\ang{4\epsilon, 16\delta}$ with $\epsilon\delta\equiv 3\Mod8$, $\ang{4\epsilon, 64\alpha}$,\\[-2pt]
  $\fn (\ell_2)\subseteq 8\z_2$, or $\q_2 \ell \cong \q_2 \mathbb{H}$
 \end{tabular}\\\hline
\end{tabular}
\fontdimen2\font=\ofontdimentwo%
\fontdimen3\font=\ofontdimensam%
\fontdimen4\font=\ofontdimennes%
\fontdimen7\font=\ofontdimenqil%
\end{table}

\begin{lem}\label{lem:5excfj}
For the $5$-section $M$ and its core prime $q$ of each type given in Table~\ref{tblr:5excfj}, if a binary $\z$-lattice $\ell$ is not primitively represented by $M$, then $\ell$ satisfies one of the conditions given in the table.
\end{lem}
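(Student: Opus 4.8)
The plan is to follow verbatim the template of the proof of Lemma~\ref{lem:5excep}. For each quinary $5$-section $M$ listed in Table~\ref{tblr:5excfj} I would first reduce the global primitive-representation question to a single prime, namely its core prime $q$, and then carry out an explicit computation over $\z_q$ to read off the admissible local shapes of the binary exceptions.

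First I would confirm that the prime $q$ recorded in the table is the only core prime of $M$. Writing $\q M \cong U \perp \langle dM\rangle$ with $U$ quaternary of square discriminant, the core prime is by Definition~\ref{defn:core} the unique finite prime at which $U$ is anisotropic, i.e.\ where $S_q U \ne (-1,-1)$; this is a finite Hasse-symbol check at the primes dividing $2\,dM$. For every prime $p \ne q$ the space $U_p$ is isotropic, so $M_p$ primitively represents $\mathbb{H}\perp\mathbb{H}$ and is therefore primitively $2$-universal over $\z_p$. Consequently a binary $\z$-lattice $\ell$ fails to be primitively represented by $M$ if and only if $\ell_q$ fails to be primitively represented by $M_q$, exactly as in the opening paragraph of the proof of Lemma~\ref{lem:5excep}.

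The second step is the local analysis at $q$. I would compute a Jordan decomposition $M_q \cong \mathbb{H}\perp N$ and determine the set $Q^\ast(N)$ of values primitively represented by $N$; from $Q^\ast(N)$ one reads off exactly which binary $\z_q$-lattices $M_q$ primitively represents, and the complement gives the local conditions tabulated. For odd $q$ this is short, since $Q^\ast(N)$ is a union of a few square classes and one argues as in the type-K case of Lemma~\ref{lem:5excep}, using isometries such as $\langle q,q\rangle \cong \langle q\Delta_q, q\Delta_q\rangle$. For $q=2$ one instead runs through the local classification of binary $\z_2$-lattices by Jordan type, scale, and norm, testing each against $M_2$ as in the type-I case of Lemma~\ref{lem:5excep}. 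Since all rows are handled by the same mechanism, it suffices to present one representative row for each parity of $q$ and leave the remaining verifications, which are routine direct computations, to the reader.

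The main obstacle is the $q=2$ local analysis: there the description of $Q^\ast(N)$ involves congruence conditions modulo $8$, $16$, and $32$, and one must carefully separate the genuinely unrepresented binary $\z_2$-lattices from those that only appear exceptional, keeping the bookkeeping of scales, norms, and unit square classes under control. By contrast, the determination of the core prime and the odd-$q$ rows are straightforward.
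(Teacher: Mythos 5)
Your overall mechanism --- pin down the core prime, reduce to a local computation, and classify binary $\z_q$-lattices against a Jordan splitting $M_q\cong\mathbb{H}\mathbin{\perp}N$ via $Q^\ast(N)$ --- is indeed the mechanism behind Lemma~\ref{lem:5excep}, and the paper's own proof of the present lemma is just ``direct computations, left to the readers.'' However, your reduction step contains a genuine error, and Table~\ref{tblr:5excfj} itself contradicts it: for type F the $5$-section is $M\cong I_3\mathbin{\perp}\ang{2,3}$ with core prime $q=3$, yet the table records $2$-adic conditions ($\ell_2\cong\ang{4\epsilon,4\delta}$ or $\mathbb{A}$) alongside the $3$-adic ones. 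Your claim that ``for every prime $p\ne q$ the space $U_p$ is isotropic, so $M_p$ primitively represents $\mathbb{H}\mathbin{\perp}\mathbb{H}$ and is therefore primitively $2$-universal over $\z_p$'' fails at $p=2$ on two counts: isotropy of the underlying $\q_p$-space says nothing about the integral structure of the $\z_2$-lattice, and $\mathbb{H}\mathbin{\perp}\mathbb{H}$ over $\z_2$ has norm ideal $2\z_2$, so it cannot represent any odd lattice and is nowhere near primitively $2$-universal --- which is exactly why the paper invokes $\mathbb{H}\mathbin{\perp}\mathbb{H}$ only at \emph{odd} primes (see the proofs of Theorem~\ref{thm:pf41} and Lemma~\ref{lem:h}). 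Concretely, in $M_2\cong\ang{1,1,1,3}\mathbin{\perp}\ang{2}$ any primitive vector $x$ with $Q(x)\equiv 4 \bmod 8$ must have exactly two odd coordinates among the first three and an odd fifth coordinate; two orthogonal vectors of this shape are forced to have the \emph{same} pair of odd coordinates, whence every $2\times 2$ minor of their coefficient matrix is even. So no lattice $\ang{4\epsilon,4\delta}$ is primitively represented by $M_2$ --- a genuine obstruction at a non-core prime, which your argument would erase. The intermediate statement you would derive for row F (that all exceptions carry only $3$-adic conditions) is therefore false; the fix is to test $M_p$ for primitive $2$-universality directly at \emph{every} prime $p$ dividing $2\,dM$, not only at $q$, which in these tables adds exactly the prime $2$ for type F.

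A smaller omission: the local-to-global reduction ``$\ell$ fails globally if and only if $\ell_p$ fails at some bad prime'' is not automatic; it uses that $M$ has class number one, so that local primitive representability everywhere yields primitive representability by $M$ itself rather than by some lattice in its genus (\cite[102:5]{OM}, and Subsection~\ref{ssec:42}, where the paper checks $h(M)=1$ for the relevant $5$-sections). You should state this hypothesis explicitly before invoking the reduction, since for type H sections (class number two) precisely this step breaks down and the paper must work harder.
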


\begin{proof}
Since one may prove the lemma by direct computations, the proof is left to the readers.
\end{proof}
\begin{table}[ht]
\caption{The core prime and the local structures}
\label{tblr:5excfj}
\renewcommand{\arraystretch}{1.2}
\begin{tabular}{c|c|c|l}
 \hline
 Type & $M$ & $q$ & \hfil Local structures \\\hline
 C & $I_4\mathbin{\perp}\ang{3}$ & $2$ & $\ell_2\cong\ang{3, 8\alpha}$ or $\fn (\ell_2)\subseteq 4\z_2$\\\hline
 F & $I_3\mathbin{\perp}\ang{2, 3}$ & $3$ &
  \begin{tabular}{@{}l@{}}
   $\ell_2\cong\ang{4\epsilon, 4\delta}$ or $\mathbb{A}$, or\\[-2pt]
   $\ell_3\cong\ang{6, 9\alpha}$ or $\fs (\ell_3)\subseteq 9\z_3$
  \end{tabular}\\\hline
 J & $I_2\mathbin{\perp}\left(\begin{smallmatrix}2&1\\1&2\end{smallmatrix}\right)\mathbin{\perp}\ang{3}$ & $2$ & $\ell_2\cong\ang{1, 8\alpha}$ or $\fn (\ell_2)\subseteq 4\z_2$\\\hline
 - & $I_3\mathbin{\perp}\left(\begin{smallmatrix}3&1\\1&3\end{smallmatrix}\right)$ & $2$ &
  \begin{tabular}{@{}l@{}}
   $\ell_2\cong\ang{3, 7}$, $\ang{-1, 4}$, $\ang{2, 2}$, $\ang{2, 64\alpha}$,\\[-2pt]
   $\ang{10, 32\epsilon}$, $\ang{8, 64\alpha}$, or $\fs (\ell_2)\subseteq 16\z_2$
  \end{tabular}\\\hline
\end{tabular}
\end{table}

\end{document}